\tikzset{world/.style=circle,inner sep=2pt,outer sep=3pt}
\newcommand{\alg}{\mathbf}
\newcommand{\class}{\mathsf}
\newcommand{\set}[2]{\{ #1 \mid #2 \}}
\newcommand{\tuple}{\overline}
\newcommand{\equals}{\approx}
\newcommand{\gequals}{\geq}
\newcommand{\assign}{\mathrel{:=}}
\newcommand{\pair}[2]{\langle #1, #2 \rangle}
\newcommand{\bs}{\backslash}
\newcommand{\into}{\hookrightarrow}
\newcommand{\down}{{\downarrow}}
\newcommand{\up}{{\uparrow}}
\newcommand{\idcon}{\Delta}
\newcommand{\iso}{\cong}
\newcommand{\Z}{\mathbb{Z}}
\newcommand{\Zm}{\Z_{-}}
\newcommand{\Zp}{\Z_{+}}
\newcommand{\Nfive}{\alg{N}_{\alg{5}}}
\newcommand{\Mthree}{\alg{M}_{\alg{3}}}
\newcommand{\Two}{\alg{2}}
\newcommand{\minustwo}{\mathsf{-2}}
\newcommand{\minusone}{\mathsf{-1}}
\newcommand{\zero}{\mathsf{0}}
\newcommand{\one}{\mathsf{1}}
\newcommand{\coone}{\overline{\mathsf{1}}}
\DeclareMathOperator{\D}{D}
\DeclareMathOperator{\Idl}{Idl}
\DeclareMathOperator{\Cg}{Cg}
\DeclareMathOperator{\LCg}{LCg}
\DeclareMathOperator{\Con}{Con}
\DeclareMathOperator{\LCon}{LCon}
\DeclareMathOperator{\RCon}{RCon}
\newcommand{\ConLat}{\Con_{\class{Lat}}}
\DeclareMathOperator{\FiOp}{Fi}
\DeclareMathOperator{\NFiOp}{NFi}
\DeclareMathOperator{\FgOp}{Fg}
\newcommand{\Fi}{\FiOp_{\ast}}
\newcommand{\NFi}{\NFiOp_{\ast}}
\newcommand{\Fg}{\FgOp_{\ast}}
\newcommand{\LTheta}{\mathrm{L}\Theta}
\newcommand{\RTheta}{\mathrm{R}\Theta}
\newcommand{\CTheta}{\Theta}
\newcommand{\ThetaPlus}{\Theta_{+}}
\newcommand{\ThetaC}{\Theta_{\mathrm{C}}}
\newcommand{\RL}{\class{RL}}
\newcommand{\CRL}{\class{CRL}}
\newcommand{\pp}{\mathrm{pp}}
\DeclareMathOperator{\Pre}{LPre}
\newtheorem{theorem}{Theorem}[section]
\newtheorem{lemma}[theorem]{Lemma}
\newtheorem{fact}[theorem]{Fact}
\newtheorem{definition}[theorem]{Definition}
\newtheorem{corollary}[theorem]{Corollary}
\title[Pointed lattice subreducts of varieties~of~residuated~lattices]{Pointed lattice subreducts of varieties~of~residuated~lattices}
\author{Adam P\v{r}enosil}
\address{Universitat de Barcelona, Departament de Filosofia}
\email{adam.prenosil@ub.edu}
\thanks{We are grateful to Nick Galatos and Peter Jipsen for their comments on an earlier draft of Section~\ref{section: cancellative}. This work was funded by the grant 2021 BP 00212 of the grant agency AGAUR of the Generalitat de Catalunya.}
\keywords{Lattices, residuated lattices, cancellative residuated lattices}
\begin{document}

\begin{abstract}
  We study the pointed lattice subreducts of varieties of residuated lattices (RLs) and commutative residuated lattices (CRLs), i.e.\ lattice subreducts expanded by the constant $\one$ denoting the multiplicative unit. Given any positive universal class of pointed lattices $\class{K}$ satisfying a certain equation, we describe the pointed lattice subreducts of semi-$\class{K}$ and of pre-$\class{K}$ RLs and CRLs. The quasivariety of semi-prime-pointed lattices generated by pointed lattices with a join prime constant $\one$ plays an important role here. In particular, the pointed lattices reducts of integral (semiconic) RLs and CRLs are precisely the integral (semiconic) semi-prime-pointed lattices. We also describe the pointed lattice subreducts of integral cancellative CRLs, proving in \mbox{particular} that every lattice is a subreduct of some integral cancellative CRL. This resolves an open problem about cancellative CRLs.
\end{abstract}

\maketitle

  In this paper we shall investigate the pointed lattice subreducts of varieties of residuated lattices, i.e.\ subreducts in the signature consisting of the binary lattice operations and the constant $\one$ (the multiplicative unit). The added value of considering pointed lattice subreducts rather than merely lattice subreducts is that we get more fine-grained information about \emph{where} exactly a sublattice can occur.

  While it is well known that every lattice is a subreduct of some residuated lattice (RL) and in fact of some commutative residuated lattice (CRL), the presence of the constant $\one$ imposes some non-trivial constraints on pointed lattice subreducts. For example, it is known that in every subdirectly irreducible CRL the element $\one$ is join irreducible. Because, like every variety, the variety of CRLs is generated as a quasivariety by its subdirectly irreducible algebras, each pointed lattice subreduct of a CRL must therefore lie in the quasivariety of \emph{semi-irreducible-pointed lattices} generated by pointed lattices where $\one$ is join irreducible. Less trivially, we show that the pointed lattice subreducts of all RLs lie in this quasivariety.

  We do not know whether this quasivariety is precisely the class of pointed lattice subreducts of RLs or CRLs, i.e.\ whether every pointed lattice where $\one$ is join irreducible embeds into a RL or a CRL. We currently only have at our disposal a construction which embeds a pointed lattice where $\one$ is join \emph{prime} into a CRL. The distinction between the join irreducibility and the join primeness of~$\one$, however, disappears in pointed lattices which satisfy the equation $\one \wedge (x \vee y) \equals (\one \wedge x) \vee (\one \wedge y)$.

  We shall accordingly focus our attention on the variety of \emph{semi-$\class{K}$} RLs, where $\class{K}$ is an arbitrary positive universal class of pointed lattices validating this equation. These are RLs which are up to isomorphism subdirect products of RLs with a pointed lattice reduct in $\class{K}$. For example, if $\class{K}$ is the class of all integral pointed lattices (where $\one$ is the top element), semi-$\class{K}$ RLs are precisely the integral RLs, while if $\class{K}$ is the class of conic pointed lattices (where $\one \leq x$ or $x \leq \one$ for each~$x$), semi-$\class{K}$ RLs are precisely the semiconic RLs. Our first main result (Theorem~\ref{thm: main}) states that the pointed lattice subreducts of semi-$\class{K}$ RLs and of (semisimple) semi-$\class{K}$ CRLs are up to isomorphism the subdirect products of irreducible-pointed lattices in~$\class{K}$. For example (Corollary~\ref{cor: main}), the pointed lattice subreducts of integral RLs and of (semimple) integral CRLs are precisely the integral semi-irreducible-pointed lattices, while the pointed lattice subreducts of semiconic RLs and of (semisimple) semiconic CRLs are precisely the semiconic semi-irreducible-pointed lattices.

  In addion to the variety of semi-$\class{K}$ RLs, one can also consider the larger variety of \emph{left pre-$\class{K}$} RLs. This variety can be introduced either axiomatically (Definition~\ref{def: pre-k}) or through several other equivalent definitions (Theorem~\ref{thm: pre-k}). For example, if $\class{K}$ is the class of all linear pointed lattices, left pre-$\class{K}$ RLs are precisely the left prelinear RLs. Our second main result (Theorem~\ref{thm: pre-k subreducts}) shows that the pointed lattice reducts of left pre-$\class{K}$ RLs coincide with the pointed lattice reducts of semi-$\class{K}$ RLs. As a consequence, every left prelinear RL has a distributive lattice reduct (Corollary~\ref{cor: prelinear}) and every left preconic RL has a semiconic pointed lattice reduct (Corollary~\ref{cor: preconic}). This improves on existing theorems which only prove that left prelinearity implies distributivity under additional assumptions (e.g.~\cite[Corollary~4.2.6]{RLBook}).

  We also show that the pointed lattice subreducts of integral cancellative CRLs are precisely the integral semi-irreducible-pointed lattices (Theorem~\ref{thm: cancellative}). As a consequence, every lattice is a subreduct of a (semisimple) integral cancellative CRL (Corollary~\ref{cor: cancellative}). This resolves an open problem about the lattice reducts of cancellative CRLs~\cite[Problem~8.2]{CanRLs} (also posed in~\cite[Problem 7, p.~232]{RLBook}).

  Finally, it remains to axiomatize the quasivariety of semi-irreducible-pointed lattices, which plays a prominent role in the above theorems. We show (Theorem~\ref{thm: semi-prime-pointed}) that this quasivariety of \emph{semi-prime-pointed lattices} generated by pointed lattices where $\one$ is join prime is axiomatized by the conjunction of up-distributivity at $\one$:
\begin{align*}
  x \vee y \gequals \one ~ \& ~ x \vee z \gequals \one \implies x \vee (y \wedge z) \gequals \one,
\end{align*}
  and decomposability at $\one$: for all $n \geq 2$ and all $x_{1}, \dots, x_{n} \in \alg{A}$
\begin{align*}
  x_{1} \vee \dots \vee x_{n} = \one \implies \Cg^{\alg{A}} \pair{x_{1}}{1} \cap \dots \cap \Cg^{\alg{A}} \pair{x_{n}}{1} = \idcon_{\alg{A}}.
\end{align*}
  This implication is admittedly not a quasi-equation, but it is equivalent to an infinite set of quasi-equations. An axiomatization of the quasivariety of semi-irreducible-pointed lattices is then obtained by substituting each variable $u$ above by~$\one \wedge u$.

  The paper is structured as follows. In Section~\ref{sec: pointed lattices} we introduce the main classes of pointed lattices studied in this paper and prove some basic observations about them. In Section~\ref{sec: spp} we axiomatize the quasivariety of semi-prime-pointed lattices. This section can be skipped without loss of continuity: the axiomatization will not play any role in the rest of the paper. In Section~\ref{section: rls} we prove the main results of the paper, namely the description of the pointed lattice subreducts of semi-$\class{K}$ and of pre-$\class{K}$ RLs and CRLs. Finally, in Section~\ref{section: cancellative} we describe the pointed lattice subreducts of integral commutative cancellative residuated lattices and therefore also the lattice subreducts of commutative cancellative residuated lattices.

  All of our results apply equally well to bounded pointed lattices and bounded pointed residuated lattices with no substantial changes to their proofs, except for results concerning cancellative residuated lattices (since there is no non-trivial bounded cancellative residuated lattice).

\section{Pointed lattices}
\label{sec: pointed lattices}

  We start by introducing some basic classes of pointed lattices.

\begin{definition}
  A \emph{pointed lattice} is a lattice equipped with a constant $\one$.
\end{definition}

\begin{definition}
  An \emph{ideal} of a lattice is a non-empty downset closed under binary joins, a \emph{filter} is a non-empty upset closed under binary meets. A \emph{$\one$-filter} is a filter which contains $\one$. A \emph{$\one$-proper ideal} of a pointed lattice is an ideal $I$ such that $\one \notin I$. A $\one$-proper ideal $I$ is \emph{prime} in case $a \wedge b \in I$ implies that either $a \in I$ or $b \in I$. A $\one$-filter $F$ is \emph{prime} in case $a \vee b \in F$ implies that either $a \in F$ or $b \in F$.
\end{definition}

  We shall be interested in equational classes and in positive universal classes of pointed lattices. Recall that an \emph{equational class}, also called a \emph{variety}, is a class of algebras in a given signature axiomatized by some set of (universally quantified) equations, a \emph{positive universal class} is axiomatized by some set of (universally quantified) finite disjunctions of equations, and a \emph{universal class} is axiomatized by some set of (universally quantified) finite disjunctions of equations or negated equations. We shall generally omit the universal quantification when writing down universal sentences. Equivalently, varieties are classes of algebras in a given signature closed under $\mathbb{HSP}$, positive universal classes are classes of algebras closed under $\mathbb{HSP}_{\mathrm{U}}$, and universal classes are classes of algebras closed under $\mathbb{ISP}_{\mathrm{U}}$. Algebras which belong to a universal class $\class{K}$ will be called \emph{$\class{K}$-algebras}.

  The following will be our running examples of positive universal classes of pointed lattices. The class of (dually) integral pointed lattices is in fact a variety.

\begin{definition}
  A pointed lattice is \emph{(dually) integral} if $\one$ is its top (bottom) element. An integral pointed lattice will also be called a \emph{unital lattice}. A pointed lattice is \emph{linear} if it satisfies the positive universal sentence
\begin{align*}
  x \leq y \mathrm{~or~} y \leq x.
\end{align*}
  It is \emph{conic} if it satisfies the positive universal sentence
\begin{align*}
  x \leq \one \mathrm{~or~} \one \leq x.
\end{align*}
\end{definition}

  Further examples of positive universal classes of pointed lattices include pointed lattices of \emph{width} at most $n$ (i.e.\ with antichains of size at most~$n$) and pointed lattices of \emph{height} at most $n$ (i.e.\ with chains of size at most~$n$).

  The principal upset (downset) generated by an element $x$ of a lattice is denoted by $\up x$ (by $\down x$). A pointed lattice $\alg{A}$ is thus conic if and only if $\alg{A} = \up \one \cup \down \one$.

\begin{definition}
  The \emph{positive cone} of a pointed lattice $\alg{A}$ is the (dually integral) pointed sublattice $\alg{A}_{+}$ whose universe is the upset $\up \one$, the \emph{negative cone} is the (integral) pointed sublattice $\alg{A}_{-}$ whose universe is the downset $\down \one$.
\end{definition}

\begin{definition}
  Let $\class{K}$ be a universal class of pointed lattices. Then \emph{semi-$\class{K}$} pointed lattices are pointed lattices isomorphic to subdirect products of pointed lattices in $\class{K}$. Equivalently, the class of semi-$\class{K}$ pointed lattices is $\mathbb{ISP}(\class{K})$.
\end{definition}

  Semi-$\class{K}$ pointed lattices algebras form a \emph{quasivariety}: a class of algebras in a given signature axiomatized by a set of (universally quantified) quasi-equations, or equivalently a class closed under $\mathbb{ISPP}_{\mathrm{U}}$. J\'{o}nsson's lemma ensures that if $\class{K}$ is a positive universal class of pointed lattices, then semi-$\class{K}$ pointed lattices form a variety (see~\cite[Theorem~4.5]{Jonsson}).

  The positive universal classes $\class{K}$ introduced above and the corresponding classes of semi-$\class{K}$ pointed lattices are depicted in the following Hasse diagrams:
\vskip 10pt
\begin{center}
\begin{tikzpicture}[scale=1, dot/.style={circle,fill,inner sep=2pt,outer sep=2pt}, empty/.style={circle,draw,inner sep=2pt,outer sep=2pt}]
  \node (0) at (0,0) {int.\ linear};
  \node (a) at (-1,1) {integral};
  \node (b) at (1,1) {linear\vphantom{g}};
  \node (c) at (0,2) {conic};
  \draw[-] (0) -- (a) -- (c) -- (b) -- (0);
\end{tikzpicture}
\qquad
\qquad
\begin{tikzpicture}[scale=1, dot/.style={circle,fill,inner sep=2pt,outer sep=2pt}, empty/.style={circle,draw,inner sep=2pt,outer sep=2pt}]
  \node (0) at (0,0) {int.\ distributive};
  \node (a) at (-1,1) {integral};
  \node (b) at (1,1) {distributive\vphantom{g}};
  \node (c) at (0,2) {semiconic};
  \draw[-] (0) -- (a) -- (c) -- (b) -- (0);
\end{tikzpicture}
\end{center}
  Notice that the variety of semilinear pointed distributive lattices is simply the variety of distributive pointed lattices (because each distributive pointed lattice is isomorphic to a subdirect product of one or both of the two pointed expansions of the two-element lattice). The variety of semiconic pointed distributive lattices can be described in a number of equivalent ways.

\begin{fact} \label{fact: semiconic} \label{fact: sp of cones}
  The following are equivalent for each pointed lattice $\alg{A}$:
\begin{enumerate}[(i)]
\item $\alg{A}$ is semiconic.
\item $\alg{A}$ is isomorphic to a subdirect product of $\alg{A}_{+}$ and $\alg{A}_{-}$.
\item $\alg{A}$ satisfies the equations
\begin{align*}
  & \one \wedge (x \vee y) \equals (\one \wedge x) \vee (\one \wedge y), \\
  & \one \vee (x \wedge y) \equals (\one \vee x) \wedge (\one \vee y), \\
  & x \wedge (\one \vee y) \equals (x \wedge \one) \vee (x \wedge y).
\end{align*}
\item $\alg{A}$ satisfies the first two of the above equations and the quasi-equation
\begin{align*}
  \one \wedge x \equals \one \wedge y ~ \& ~ \one \vee x \equals \one \vee y \implies x \equals y.
\end{align*}
\item $\alg{A}$ satisfies the first two of the above equations and does not contain a pointed sublattice isomorphic to the following pointed expansion of the lattice~$\Nfive$:
\vskip 10pt
\begin{center}
\begin{tikzpicture}[scale=1, dot/.style={circle,fill,inner sep=2pt,outer sep=2pt}, empty/.style={circle,draw,inner sep=2pt,outer sep=2pt}]
  \node (0) at (0,0) [empty] {};
  \node (a) at (-1,1) [dot] {};
  \node (b) at (1,0.666) [empty] {};
  \node (c) at (1,1.333) [empty] {};
  \node (top) at (0,2) [empty] {};
  \draw[-] (0) -- (a) -- (top) -- (c) -- (b) -- (0);
\end{tikzpicture}
\end{center}
\end{enumerate}
\end{fact}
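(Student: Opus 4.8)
The plan is to prove the cycle of implications $(ii) \Rightarrow (i) \Rightarrow (iii) \Rightarrow (iv) \Rightarrow (ii)$ and then to establish $(iv) \Leftrightarrow (v)$ separately; together these yield the equivalence of all five conditions. The implication $(ii) \Rightarrow (i)$ is immediate: the positive cone $\alg{A}_{+}$ is dually integral and the negative cone $\alg{A}_{-}$ is integral, hence both are conic, so any subdirect product of them is semiconic by definition. For $(i) \Rightarrow (iii)$ I would first check that each of the three displayed equations holds in every conic pointed lattice by a short case analysis according to whether the variables lie in $\up \one$ or in $\down \one$ (in a conic lattice every element is comparable to $\one$); since these conditions are equations and semiconic lattices are by definition $\mathbb{ISP}$ of conic lattices, they are inherited by every semiconic pointed lattice.

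For $(iii) \Rightarrow (iv)$ only the third equation is needed. Assuming $\one \wedge x \equals \one \wedge y$ and $\one \vee x \equals \one \vee y$, I would substitute into the third equation: since $\one \vee y = \one \vee x \gequals x$, the left-hand side $x \wedge (\one \vee y)$ collapses to $x$, giving $x = (x \wedge \one) \vee (x \wedge y)$, and symmetrically $y = (y \wedge \one) \vee (y \wedge x)$; as $x \wedge \one = y \wedge \one$ by hypothesis, the two right-hand sides coincide and $x \equals y$. For $(iv) \Rightarrow (ii)$ I would use the first two equations to verify that the maps $x \mapsto \one \wedge x$ and $x \mapsto \one \vee x$ are pointed lattice homomorphisms onto $\alg{A}_{-}$ and $\alg{A}_{+}$ respectively: the first equation says that $x \mapsto \one \wedge x$ preserves joins, preservation of meets and of $\one$ being automatic, and dually for the second. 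The induced homomorphism $x \mapsto \pair{\one \wedge x}{\one \vee x}$ into $\alg{A}_{-} \times \alg{A}_{+}$ is then subdirect, and it is injective precisely because of the quasi-equation in $(iv)$, yielding the desired subdirect representation.

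It remains to prove $(iv) \Leftrightarrow (v)$; since both conditions presuppose the first two equations, this amounts to showing that, in their presence, the quasi-equation is equivalent to the absence of the displayed pointed $\Nfive$. One direction is easy: in that pointed $\Nfive$ the two elements on the longer side are distinct, yet both have the bottom as their meet with $\one$ and the top as their join with $\one$, so its presence refutes the quasi-equation. The converse is the step I expect to be the main obstacle. Here I would argue contrapositively: given distinct $x, y$ with $\one \wedge x \equals \one \wedge y =: m$ and $\one \vee x \equals \one \vee y =: j$, set $b \assign x \wedge y$ and $c \assign x \vee y$, so that $b \leq c$ with $b \neq c$ (distinctness of $x, y$ forces $b \neq c$). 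Using the first two equations one computes $\one \wedge b = \one \wedge c = m$ and $\one \vee b = \one \vee c = j$, and from $b < c$ together with these identities one checks that $b$ and $c$ are both incomparable to $\one$ and that $m < b < c < j$ while $m < \one < j$. The five elements $m, b, \one, c, j$ then form a pointed sublattice isomorphic to the forbidden $\Nfive$, with $\one$ as the distinguished middle element.

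The genuinely delicate points are exactly these strict comparabilities and incomparabilities: for instance, $b \leq \one$ would force $j = \one \vee b = \one$, hence $c \leq \one$ and then $c = \one \wedge c = m = \one \wedge b = b$, contradicting $b \neq c$, and the symmetric arguments rule out $\one \leq b$, $c \leq \one$, $\one \leq c$, $m = b$, $c = j$, and $m = \one = j$. Since the third equation is unavailable in $(v)$, the whole argument must be carried by the two equations that are assumed, which is precisely where they enter (the first to compute $\one \wedge c = m$, the second to compute $\one \vee b = j$). Once this reconstruction is in place, the cycle closes and all five conditions are seen to be equivalent.
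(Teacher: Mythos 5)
Your proposal is correct and follows essentially the same route as the paper: the same cycle of implications, the same use of the third equation for (iii)~$\Rightarrow$~(iv), and the same construction of the forbidden pentagon from $x \wedge y$ and $x \vee y$ for (iv)~$\Leftrightarrow$~(v). The only cosmetic difference is that in (iv)~$\Rightarrow$~(ii) you work directly with the homomorphisms $x \mapsto \one \wedge x$ and $x \mapsto \one \vee x$, whereas the paper phrases the same argument in terms of their kernel congruences $\theta_{\down}$ and $\theta_{\up}$.
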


\begin{proof}
  (i) $\Rightarrow$ (iii): each conic pointed lattice, and therefore each semiconic pointed lattice, satisfies these equations by a simple case analysis (the cases being whether $x$ and $y$ lie in the positive or in the negative cone).

  (iii) $\Rightarrow$ (iv): if $\one \wedge a = \one \wedge b$ and $\one \vee a = \one \vee b$ for some $a, b \in \alg{A}$, then $a = a \wedge (\one \vee a) = a \wedge (\one \vee b) = (a \wedge \one) \vee (a \wedge b) = (b \wedge \one) \vee (b \wedge a) = b \wedge (\one \vee a) = b \wedge (\one \vee b) = b$.

  (iv) $\Rightarrow$ (ii): the following relations $\theta_{\down}$ and $\theta_{\up}$ are congruences of~$\alg{A}$ because of the first two equations:
\begin{align*}
  & \pair{a}{b} \in \theta_{\down} \iff \one \wedge a = \one \wedge b, & & \pair{a}{b} \in \theta_{\up} \iff \one \vee a = \one \vee b.
\end{align*}
  Clearly $\alg{A} / \theta_{\down}$ is an integral pointed lattice and $\alg{A} / \theta_{\up}$ a dually integral one. If $\alg{A}$ moreover satisfies the quasi-equation, then $\theta_{\down} \cap \theta_{\up} = \idcon_{\alg{A}}$, so $\alg{A}$ is isomorphic to a subdirect product of its negative and its positive cone.

  (ii) $\Rightarrow$ (i): trivial.

  (iv) $\Leftrightarrow$ (v): the quasi-equation fails in this expansion of $\Nfive$, so if $\Nfive$ embeds into~$\alg{A}$, then the quasi-equation fails in $\alg{A}$. Conversely, if the quasi-equation fails in~$\alg{A}$, there are $a, b \in \alg{A}$ with $\one \wedge a = \one \wedge b$ and $\one \vee a = \one \vee b$ but $a \neq b$. Take $a' \assign a \vee b$ and $b' \assign a \wedge b$. Then $\one \wedge a' = (\one \wedge a) \vee (\one \wedge b) = \one \wedge a = \one \wedge b'$. Similarly, $\one \vee b' = (\one \vee a) \wedge (\one \vee b) = \one \vee a = \one \vee a'$. Since $b' < a'$, we obtain an isomorphic copy of the forbidden expansion of $\Nfive$ in $\alg{A}$.
\end{proof}

  Within each of the positive universal classes introduced above, we shall be interested in algebras where the constant $\one$ is join prime, or at least join irreducible.

\begin{definition} 
  Let $\alg{A}$ be a lattice. An element $a \hskip-1.1pt \in \hskip-1.1pt \alg{A}$ is \emph{join prime} if for all~${x, y \hskip-1.1pt \in \hskip-1.1pt \alg{A}}$
\begin{align*}
  x \vee y \geq a \implies x \geq a \text{ or } y \geq a.
\end{align*}
  It is \emph{join irreducible} if for all $x, y \in \alg{A}$
\begin{align*}
  x \vee y = a \implies x = a \text{ or } y = a.
\end{align*}
  It is \emph{meet irreducible} if for all $x, y \in \alg{A}$
\begin{align*}
  x \wedge y = a \implies x = a \text{ or } y = a.
\end{align*}
  It is \emph{completely join prime} if for each family $(x_{i})_{i \in I}$ of elements of $\alg{A}$
\begin{align*}
  \bigvee_{i \in I} x_{i} \geq a \implies x_{i} \geq a \text{ for some } i \in I.
\end{align*}
  A \emph{prime-pointed} (\emph{irreducible-pointed}) \emph{lattice} is a pointed lattice such that the element $\one$ is join prime (join irreducible). Given a positive universal class of pointed lattices $\class{K}$, the class of all prime-pointed lattices in $\class{K}$ will be denoted by $\class{K}_{\pp}$.
\end{definition}

  Contrary to common practice, it will be convenient to count the bottom element of a lattice as join irreducible and (completely) join prime, so that the classes of irreducible-pointed and prime-pointed lattices are closed under subalgebras.

  Non-trivial integral prime-pointed lattices are precisely the pointed lattices of the form $\alg{A} \oplus \one$ for some lattice $\alg{A}$: they are obtained from $\alg{A}$ by appending a new top element $\one$. Each lattice is thus a subreduct of an integral prime-pointed lattice.

  Each join prime element is join irreducible, and the converse implication holds in distributive lattices. More importantly for what follows, a conic (in particular, an integral) pointed lattice is prime-pointed if and only if it is irreducible pointed.

\begin{fact} \label{fact: semi-prime-pointed}
  The following are equivalent for each pointed lattice $\alg{A}$:
\begin{enumerate}[(i)]
\item $\alg{A}$ is (semi-)irreducible-pointed.
\item $\alg{A}_{-}$ is (semi-)irreducible-pointed.
\item $\alg{A}_{-}$ is (semi-)prime-pointed.
\end{enumerate}
  In case $\alg{A} \vDash \one \wedge (x \vee y) \equals (\one \wedge x) \vee (\one \wedge y)$, these conditions are equivalent to:
\begin{enumerate}[(i)]
\setcounter{enumi}{3}
\item $\alg{A}$ is (semi-)prime-pointed.
\end{enumerate}
\end{fact}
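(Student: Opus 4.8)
The plan is to dispatch the non-semi equivalences by elementary means, then exploit the fact that the negative-cone operation $\alg{A} \mapsto \alg{A}_{-}$ commutes with the class operators, and finally isolate the one genuinely nontrivial implication.

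For the non-semi statements I argue directly. For (i)$\iff$(ii), note that $\one$ is join irreducible in $\alg{A}$ exactly when it is join irreducible in $\alg{A}_{-}$: any witness $x \vee y = \one$ automatically satisfies $x, y \leq \one$, so $x, y \in \alg{A}_{-}$, and since $\alg{A}_{-}$ is a sublattice the defining relation is literally the same. For (ii)$\iff$(iii) I use that $\alg{A}_{-}$ is integral, so with $\one$ the top element "$x \vee y \geq \one$" collapses to "$x \vee y = \one$" and "$x \geq \one$" to "$x = \one$"; hence join primeness and join irreducibility of $\one$ coincide. For (iv), join prime always implies join irreducible; conversely, if $\one$ is join irreducible and $\alg{A}$ validates the equation, then $x \vee y \geq \one$ gives $\one = \one \wedge (x \vee y) = (\one \wedge x) \vee (\one \wedge y)$, so by irreducibility $\one \wedge x = \one$ or $\one \wedge y = \one$, i.e.\ $x \geq \one$ or $y \geq \one$.

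Two structural observations then drive the semi-versions. First, the cone operation commutes with $\mathbb{I}, \mathbb{S}, \mathbb{P}$ and $\mathbb{P}_{\mathrm{U}}$, since $(\prod_{i} \alg{A}_{i})_{-} = \prod_{i} (\alg{A}_{i})_{-}$, subalgebras and homomorphic images restrict to their cones, and $x \leq \one$ is preserved under ultraproducts; moreover the cone of an irreducible-pointed lattice is integral and irreducible-pointed, hence prime-pointed. Thus $\alg{A} \in \mathbb{ISP}(\text{irr-pointed})$ forces $\alg{A}_{-} \in \mathbb{ISP}(\text{integral prime-pointed}) \subseteq \mathbb{ISP}(\text{prime-pointed})$, giving semi-(i)$\Rightarrow$semi-(iii) and semi-(i)$\Rightarrow$semi-(ii) at once. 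Second, as $\alg{A}_{-}$ is integral and each of its subdirect factors is a quotient, hence integral, the non-semi equivalence (ii)$\iff$(iii) applied factorwise yields semi-(ii)$\iff$semi-(iii). For the semi-form of (iv) I note that the subdirect factors of $\alg{A} \in \mathbb{ISP}(\text{irr-pointed})$ are quotients of $\alg{A}$, so they inherit the equation; being irreducible-pointed and validating the equation they are prime-pointed by (iv), whence $\alg{A} \in \mathbb{ISP}(\text{prime-pointed})$ (the reverse inclusion is free, as prime-pointed lattices are irreducible-pointed).

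The remaining implication semi-(iii)$\Rightarrow$semi-(i) is the crux. The key reduction is that, for any congruence $\theta$ of $\alg{A}$, the quotient $\alg{A}/\theta$ is irreducible-pointed iff its cone $(\alg{A}/\theta)_{-} = \alg{A}_{-}/(\theta \cap \alg{A}_{-}^{2})$ is prime-pointed — this is just (i)$\iff$(iii) applied to $\alg{A}/\theta$, using that the cone commutes with $\mathbb{H}$. So it suffices to produce enough congruences of $\alg{A}$ whose trace on $\alg{A}_{-}$ gives a prime-pointed quotient and which jointly separate points. Two families present themselves: the ideal-collapsing congruence $\rho := \Cg^{\alg{A}}(\down\one \times \down\one)$, whose quotient is dually integral and therefore irreducible-pointed (the bottom counts as join irreducible), to separate elements above or beside $\one$; and liftings of the prime-pointed quotients of $\alg{A}_{-}$, abundant by semi-(iii), to separate elements distinguished below $\one$. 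The main obstacle is that lattices lack the congruence extension property, so a prime-pointed-quotient congruence of $\alg{A}_{-}$ need not lift with the same trace; small examples suggest the lattice laws nonetheless force the generated congruence to have a prime cone-trace precisely when $\one$ is join reducible. I would make this rigorous by passing to the quasivariety $\class{D} := \{\alg{A} \mid \alg{A}_{-} \in \mathbb{ISP}(\text{prime-pointed})\}$ (a quasivariety because the cone commutes with the operators) and showing that every relatively subdirectly irreducible member of $\class{D}$ is irreducible-pointed, equivalently that the restriction map $\Con_{\class{D}}(\alg{A}) \to \Con_{\mathbb{ISP}(\text{prime})}(\alg{A}_{-})$ transports monoliths correctly; this reduces the whole Fact to the behaviour of relative congruences over the cone, which I expect to be the only delicate point.
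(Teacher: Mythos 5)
Your handling of the easy parts is correct: the three plain equivalences, the implication from semi-(i) to semi-(ii) and semi-(iii) (cone commutes with $\mathbb{I}$, $\mathbb{S}$, $\mathbb{P}$; cones of irreducible-pointed lattices are integral and hence prime-pointed), the equivalence semi-(ii) $\Leftrightarrow$ semi-(iii) (subdirect factors of the integral lattice $\alg{A}_{-}$ are integral quotients, where join irreducible and join prime coincide), and the treatment of (iv). The paper offers no proof of this Fact at all, so there is nothing to compare against; these are the natural arguments. But the implication you leave open, semi-(iii) $\Rightarrow$ semi-(i), is the entire content of the Fact beyond these formal manipulations, and your proposal does not prove it: everything after ``small examples suggest'' is a plan, not an argument. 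Moreover, the congruence-extension failure you flagged as ``the only delicate point'' is not a delicacy but a fatal obstruction --- the implication is false, so no completion of your plan (in particular your reduction to relatively subdirectly irreducible members of $\class{D}$) can exist.

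Here is a counterexample. Let $\alg{A}$ be the six-element pointed lattice $\{ 0, x_{1}, x_{2}, \one, h, t \}$ in which $\{ 0, x_{1}, x_{2}, \one \}$ is a copy of $\Two \times \Two$ with top $\one$, the element $t$ is the top of $\alg{A}$ with $\one < t$, and $h$ satisfies $0 < h < t$ and is incomparable to $x_{1}, x_{2}, \one$. This is a lattice: $x_{i} \wedge h = \one \wedge h = 0$ and $x_{i} \vee h = \one \vee h = t$. Its negative cone $\alg{A}_{-} \iso \Two \times \Two$ is a subdirect product of two prime-pointed two-element chains, so semi-(ii) and semi-(iii) hold. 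Now let $\psi$ be any congruence of $\alg{A}$ whose quotient is irreducible-pointed. Since $x_{1} \vee x_{2} = \one$, join irreducibility of $\one/\psi$ forces $\pair{x_{1}}{\one} \in \psi$ or $\pair{x_{2}}{\one} \in \psi$. In the first case, meeting with $x_{2}$ gives $\pair{0}{x_{2}} \in \psi$, then joining with $h$ gives $\pair{h}{t} = \pair{0 \vee h}{x_{2} \vee h} \in \psi$, then meeting with $\one$ gives $\pair{0}{\one} = \pair{h \wedge \one}{t \wedge \one} \in \psi$; the second case is symmetric. So every congruence of $\alg{A}$ with an irreducible-pointed quotient collapses $0$ and $\one$. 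Consequently, for any embedding $\alg{A} \into \prod_{j} \alg{B}_{j}$ with each $\alg{B}_{j}$ irreducible-pointed, every kernel of a composite projection would contain $\pair{0}{\one}$, contradicting injectivity; hence $\alg{A}$ is not semi-irreducible-pointed and semi-(i) fails. This also kills your proposed route concretely: $\alg{A}$ lies in your quasivariety $\class{D}$, so $\class{D}$ has relatively subdirectly irreducible members that are not irreducible-pointed. Note finally that this means the Fact itself, which the paper asserts without proof, is false in the direction semi-(ii)/(iii) $\Rightarrow$ semi-(i); what survives is exactly what you proved. The implication does hold under extra distributivity at $\one$ --- e.g.\ for semiconic $\alg{A}$, where $\alg{A}$ is a subdirect product of $\alg{A}_{-}$ and the automatically prime-pointed $\alg{A}_{+}$ by Fact~\ref{fact: semiconic} --- and, fittingly, the counterexample above violates the equation $\one \wedge (x \vee y) \equals (\one \wedge x) \vee (\one \wedge y)$.
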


  As a consequence of the above fact, an axiomatization of the quasivariety of semi-prime-pointed lattices immediately yields an axiomatization of the quasivariety of semi-irreducible-pointed lattices: simply substitute each variable $x$ by $x \wedge \one$.

  In general, semi-$\class{K}_{\pp}$ pointed lattices form a subquasivariety of the quasivariety of semi-$\class{K}$ semi-prime-pointed lattices. For example, this holds in case $\class{K}$ is the positive universal class of pointed lattices of height at most $3$: the unital lattice $\Mthree \oplus \one$ ($\Mthree$ being the five-element diamond) is prime-pointed and up to isomorphism a subdirect product of pointed lattices of height at most $3$, but it is not up to isomorphism a subdirect product of prime-pointed lattices of height at most $3$.

  These two quasivarieties do, however, often coincide. This happens in particular when $\class{K}$ is one of our running examples of integral or conic pointed lattices.

  In the following, given a unital lattice $\alg{A}$, we use $\alg{A} \oplus \one$ to denote by unital expansion of the lattice $\alg{A} \oplus \one$.

\begin{fact} \label{fact: semi-kpp}
  Let $\class{K}$ be a positive universal class of conic pointed lattices such that $\alg{A} \in \class{K}$ implies $\alg{A}_{-} \oplus \one \in \class{K}$. Then the semi-$\class{K}_{\pp}$ pointed lattices coincide with the semi-$\class{K}$ semi-prime-pointed lattices.
\end{fact}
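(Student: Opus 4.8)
The inclusion of semi-$\class{K}_{\pp}$ into the semi-$\class{K}$ semi-prime-pointed lattices is immediate and was already noted above, so my plan is to prove the converse: given a pointed lattice $\alg{A}$ which is both semi-$\class{K}$ and semi-prime-pointed, I want to exhibit it as a subdirect product of prime-pointed members of $\class{K}$, i.e.\ to place it in $\mathbb{ISP}(\class{K}_{\pp}) = \text{semi-}\class{K}_{\pp}$. Since $\class{K}$ consists of conic lattices, $\alg{A}$ is semiconic, so by Fact~\ref{fact: semiconic} it embeds subdirectly into $\alg{A}_{+} \times \alg{A}_{-}$. As $\mathbb{ISP}(\class{K}_{\pp})$ is closed under $\mathbb{S}$ and $\mathbb{P}$, it then suffices to place each cone into this class separately. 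The positive cone is the easy half: $\alg{A}_{+}$ is dually integral, hence every homomorphic image of it is dually integral and so has a (bottom, therefore) join prime unit; and $\alg{A}_{+}$ is a homomorphic image of $\alg{A}$, so it lies in the variety semi-$\class{K}$ (J\'onsson's lemma). Any subdirect representation of $\alg{A}_{+}$ by $\class{K}$-algebras, with factors taken to be its homomorphic images, therefore uses only prime-pointed factors, whence $\alg{A}_{+} \in \mathbb{ISP}(\class{K}_{\pp})$.

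For the negative cone write $\alg{D} \assign \alg{A}_{-}$. It is integral; it is semi-$\class{K}$, being a homomorphic image of $\alg{A}$ in the variety semi-$\class{K}$; and it is semi-prime-pointed by Fact~\ref{fact: semi-prime-pointed}, using that $\alg{A}$ (being semiconic) satisfies $\one \wedge (x \vee y) \equals (\one \wedge x) \vee (\one \wedge y)$. My next step is to reduce to the \emph{prime-pointed} case. Writing $\alg{D}$ as a subdirect product of prime-pointed lattices $\alg{C}_{j}$, I may take each $\alg{C}_{j}$ to be a homomorphic image of $\alg{D}$; then $\alg{C}_{j}$ is integral and, as an image of a semi-$\class{K}$ algebra, itself semi-$\class{K}$. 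Since $\mathbb{ISP}(\class{K}_{\pp})$ is closed under $\mathbb{S}$ and $\mathbb{P}$, it is thus enough to show that every integral, prime-pointed, semi-$\class{K}$ lattice $\alg{C}$ lies in $\mathbb{ISP}(\class{K}_{\pp})$.

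This last step is the heart of the argument and the only place where the closure hypothesis on $\class{K}$ is used. Because $\one$ is join irreducible and top in $\alg{C}$, the set $\alg{E} \assign \set{x \in \alg{C}}{x < \one}$ is closed under joins, so $\alg{C} = \alg{E} \oplus \one$. I fix a subdirect representation $\alg{C} \into \prod_{i} \alg{B}_{i}$ with each $\alg{B}_{i} \in \class{K}$ a homomorphic image of $\alg{C}$, hence integral; since $(\alg{B}_{i})_{-} = \alg{B}_{i}$, the hypothesis gives $\alg{B}_{i} \oplus \one \in \class{K}$, and as this appends a fresh top it is prime-pointed, so $\alg{B}_{i} \oplus \one \in \class{K}_{\pp}$. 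I then define $\hat{f}_{i} \colon \alg{C} \to \alg{B}_{i} \oplus \one$ to agree with the projection $\alg{C} \onto \alg{B}_{i}$ on $\alg{E}$ and to send $\one$ to the freshly added top. The crucial point is that $\hat{f}_{i}$ is a pointed lattice homomorphism: this works precisely because $\alg{E}$ is closed under joins, so joins of $\alg{E}$-elements never reach $\one$ and are computed inside $\alg{B}_{i}$, which is exactly where the prime-pointedness of $\alg{C}$ enters. Finally the family $\{\hat{f}_{i}\}$ separates points — pairs inside $\alg{E}$ by the chosen representation, and pairs $\pair{\one}{x}$ with $x \in \alg{E}$ because $\hat{f}_{i}(x)$ lands strictly below the new top — so $\alg{C} \into \prod_{i} (\alg{B}_{i} \oplus \one) \in \mathbb{ISP}(\class{K}_{\pp})$, as required.

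The main obstacle is the homomorphism check in this final step. A naive retraction $\alg{B} \to \alg{B}_{-} \oplus \one$ fails for a general conic (or even integral) $\class{K}$-algebra $\alg{B}$ exactly when $\one$ is join reducible there, since then two elements below $\one$ can join to $\one$ downstairs while their images are forced below the new top upstairs. It is therefore essential to have first reduced to a prime-pointed $\alg{C}$, where $\alg{E}$ is a genuine sublattice and this failure cannot occur; everything else is routine closure bookkeeping for the quasivariety $\mathbb{ISP}(\class{K}_{\pp})$ and the variety semi-$\class{K}$.
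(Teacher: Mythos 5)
Your proof is correct, and its skeleton is the same as the paper's: the trivial forward inclusion, the semiconic decomposition of $\alg{A}$ into a subdirect product of $\alg{A}_{+}$ and $\alg{A}_{-}$ via Fact~\ref{fact: semiconic}, dually integral (hence prime-pointed) factors disposing of the positive cone, and the embedding $\alg{B} \oplus \one \into \prod_{i}(\alg{C}_{i} \oplus \one)$ built from the closure hypothesis on $\class{K}$ disposing of the negative cone. The one genuine difference is your intermediate reduction: before applying the $\oplus\,\one$ construction you first write the negative cone $\alg{D} = \alg{A}_{-}$, which is only \emph{semi}-prime-pointed, as a subdirect product of prime-pointed homomorphic images $\alg{C}_{j}$ (each integral and semi-$\class{K}$), and only then run the construction on each $\alg{C}_{j}$. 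The paper skips this step and instead asserts that $\alg{A}_{-}$ itself is ``a non-trivial integral semi-$\class{K}$ prime-pointed lattice,'' which does not follow from the hypotheses: semi-prime-pointedness of $\alg{A}$ yields only semi-prime-pointedness of $\alg{A}_{-}$ (for instance, the four-element Boolean lattice pointed at its top is integral and semi-prime-pointed but not prime-pointed, and the representation in the form $\alg{B} \oplus \one$ is exactly what fails there). Your extra reduction is precisely what legitimizes that step, so your write-up is not merely a rederivation but a repaired version of the paper's argument; everything else --- the homomorphism check for $\hat{f}_{i}$, which correctly isolates join irreducibility of $\one$ as the reason the retraction works, the separation of points, and the closure bookkeeping for $\mathbb{ISP}(\class{K}_{\pp})$ --- matches the paper's reasoning.
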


\begin{proof}
  The left-to-right inclusion is trivial. Conversely, let $\alg{A}$ be a semi-$\class{K}$ semi-prime-pointed lattice. Because $\alg{A}$ is semiconic, by Fact~\ref{fact: semiconic} it is up to isomorphism a subdirect product of $\alg{A}_{-}$ and $\alg{A}_{+}$. The positive cone $\alg{A}_{+}$ is also semi-$\class{K}$, so up to isomorphism it is a subdirect product of dually integral and therefore prime-pointed $\class{K}$-algebras. If $\alg{A}$ is dually integral, we are done. Otherwise, the negative cone $\alg{A}_{-}$ is a non-trivial integral semi-$\class{K}$ prime-pointed lattice, so it has the form $\alg{B} \oplus \one$ for some lattice $\alg{B}$. Because $\alg{A}_{-}$ is an integral semi-$\class{K}$ pointed lattice, there is an embedding $\alg{A}_{-} \into \prod_{i \in I} \alg{C}_{i}$ where $(\alg{C}_{i})_{i \in I}$ is a family of unital pointed lattices in~$\class{K}$. Composing this embedding with the inclusion $\alg{B} \into \alg{A}_{-}$ yields an embedding of lattices $\alg{B} \into \prod_{i \in I} \alg{C}_{i}$. By assumption, the unital lattices $\alg{C}_{i} \oplus \one$ also lie in $\class{K}$ and clearly they are prime-pointed. We then obtain an embedding $\alg{B} \oplus \one \into \prod_{i \in I} (\alg{C}_{i} \oplus \one)$ which witnesses that $\alg{A}_{-}$ is up to isomorphism a subdirect product of prime-pointed lattices in~$\class{K}$.
\end{proof}

  The quasivarieties of semi-$\class{K}_{\pp}$ and of semi-$\class{K}$ pointed lattices are frequently generated by their finite algebras. This again happens in particular when $\class{K}$ is one of our running examples of integral and conic pointed lattices.

  The \emph{finite embeddability property (FEP)} for a universal class $\class{K}$ states that each finite partial subalgebra of an algebra in $\class{K}$ embeds into a finite algebra in $\class{K}$. More explicitly, given an algebra $\alg{A} \in \class{K}$ and a finite set $X \subseteq \alg{A}$, there is some finite algebra $\alg{B} \in \class{K}$ and an injective map $\iota\colon X \into B$ such that for each $n$-ary operation $f$ in the signature of $\class{K}$ if $f^{\alg{A}}(a_{1}, \dots, a_{n}) = b$ for $a_{1}, \dots, a_{n}, b \in X$, then $f^{\alg{B}}(\iota(a_{1}), \dots, \iota(a_{n})) = \iota(b)$. A universal class $\class{K}$ has the FEP if and only if it is generated as a universal class by its finite algebras~\cite[Theorem~2.2]{HorcikFEP}.

\begin{fact}
  Each class $\class{K}$ of pointed lattices axiomatized by universal sentences in the signature $\{ \vee, \one \}$ has the FEP. Consequently, the quasivariety of semi-$\class{K}$ pointed lattices is generated by finite semi-$\class{K}$ pointed lattices.
\end{fact}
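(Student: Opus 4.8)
The plan is to establish the finite embeddability property (FEP) by a single explicit construction, and then read off the statement about semi-$\class{K}$ pointed lattices from the operator calculus for quasivarieties. For the FEP, fix $\alg{A} \in \class{K}$ and a finite set $X \subseteq \alg{A}$. First I would form the join-subsemilattice $S$ of $\alg{A}$ generated by $X \cup \{\one\}$; since join is idempotent, commutative and associative, $S$ is finite, being the set of all joins of nonempty subsets of $X \cup \{\one\}$. As $S$ is closed under $\vee$ and contains $\one$, its $\{\vee, \one\}$-reduct is a substructure of the $\{\vee, \one\}$-reduct of $\alg{A}$; because $\class{K}$ is axiomatised by universal sentences in $\{\vee, \one\}$ and such sentences are inherited by substructures, this reduct already satisfies all the axioms of $\class{K}$. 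The only thing missing is that a finite join-semilattice need not be a lattice.

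To remedy this I would adjoin a least element. The delicate point, which I expect to be the main obstacle, is that one cannot simply glue on a fresh bottom: doing so may falsify an axiom (for instance, the minimality of $\one$ is expressible as a universal $\{\vee, \one\}$-sentence, and a new bottom below $\one$ would violate it). Instead I set $\alg{B} \assign S \cup \{ \bigwedge_{\alg{A}} S \}$, adjoining the genuine meet of $S$ computed in $\alg{A}$, which exists because $\alg{A}$ is a lattice and $S$ is finite. This $\alg{B}$ is still closed under the join of $\alg{A}$ (the new least element joined with any $s \in S$ returns $s$), so its $\{\vee, \one\}$-reduct remains a substructure of $\alg{A}$ and hence still lies in $\class{K}$. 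Being a finite join-semilattice with a least element, $\alg{B}$ is a lattice under its intrinsic meet $x \wedge^{\alg{B}} y \assign \bigvee^{\alg{B}} \set{c \in \alg{B}}{c \leq x \text{ and } c \leq y}$; this intrinsic meet need not agree with the meet of $\alg{A}$, but that is harmless precisely because $\wedge$ does not occur in the axioms.

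It then remains to verify that the inclusion $\iota \colon X \into \alg{B}$ preserves the partial operations inherited from $\alg{A}$. Preservation of $\one$ and of $\vee$ is immediate from the construction. For $\wedge$, suppose $a, b \in X$ with $a \wedge^{\alg{A}} b \in X$; since the order of $\alg{B}$ restricts that of $\alg{A}$, the element $a \wedge^{\alg{A}} b$ is the greatest member of $\set{c \in \alg{B}}{c \leq a \text{ and } c \leq b}$, so the intrinsic meet $a \wedge^{\alg{B}} b$ coincides with $a \wedge^{\alg{A}} b$. This yields a finite $\alg{B} \in \class{K}$ into which the finite partial subalgebra on $X$ embeds, establishing the FEP.

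Finally, by the cited characterisation of the FEP we obtain $\class{K} = \mathbb{ISP}_{\mathrm{U}}(\class{K}_{\mathrm{fin}})$, where $\class{K}_{\mathrm{fin}}$ denotes the class of finite members of $\class{K}$. Since $\mathbb{ISPP}_{\mathrm{U}}$ is an idempotent closure operator containing both $\mathbb{ISP}$ and $\mathbb{ISP}_{\mathrm{U}}$, the quasivariety of semi-$\class{K}$ pointed lattices satisfies $\mathbb{ISP}(\class{K}) \subseteq \mathbb{ISP}(\mathbb{ISP}_{\mathrm{U}}(\class{K}_{\mathrm{fin}})) \subseteq \mathbb{ISPP}_{\mathrm{U}}(\class{K}_{\mathrm{fin}})$. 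As the algebras in $\class{K}_{\mathrm{fin}}$ are in particular finite semi-$\class{K}$ pointed lattices, and the reverse inclusion is trivial, this exhibits semi-$\class{K}$ as generated as a quasivariety by its finite members.
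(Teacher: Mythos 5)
Your proposal is correct and follows essentially the same route as the paper: your $\alg{B} = S \cup \{\bigwedge_{\alg{A}} S\}$ is exactly the paper's closure of $X \cup \{\one, \bigwedge X \wedge \one\}$ under joins in $\alg{A}$, with the same key points that joins (hence universal $\{\vee,\one\}$-sentences) are inherited from $\alg{A}$ while the intrinsic meet of the finite join-semilattice agrees with $\wedge^{\alg{A}}$ wherever the partial operation is defined on $X$. Your explicit operator-calculus derivation of the ``consequently'' clause just spells out what the paper leaves implicit via \cite[Theorem~2.2]{HorcikFEP}.
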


\begin{proof}
  Consider a pointed lattice $\alg{A}$ and a finite subset $X \subseteq \alg{A}$. Let $B \subseteq \alg{A}$ be the closure of $X \cup \{ \one, \bigwedge X \wedge \one \}$ under joins in $\alg{A}$. As a subposet of $\alg{A}$, $B$ is a finite join semilattice with a least element and thus forms a lattice $\alg{B}$. By construction, the join in $\alg{B}$ of each pair of elements $a, b \in \alg{B}$ agrees with the join in $\alg{A}$, as does their meet in $\alg{B}$ in case $a \wedge b \in X$. Because joins in $\alg{A}$ and $\alg{B}$ agree, each universal sentence in the signature $\{ \vee, \one \}$ which holds in $\alg{A}$ is also satisfied by $\alg{B}$.
\end{proof}

  The variety of semi-$\class{K}$ pointed lattices, for a positive universal class $\class{K}$, is always generated as a variety by $\class{K}$. Frequently, we can improve this to being generated by~$\class{K}_{\pp}$. Again, this happens when $\class{K}$ is one of our running examples of integral and conic pointed lattices, and also when $\class{K}$ is the whole variety of pointed lattices.

  Given a pointed lattice $\alg{A}$, the \emph{doubling} of $\alg{A}$ at $\one$ is the pointed lattice $\D(\alg{A})$ obtained by adding a new element $\one_{-}$ below $\one$ so that $a \leq \one_{-}$ in $\D(\alg{A})$ for $a \in \alg{A}$ if and only if $a < \one$, and $a \geq \one_{-}$ if and only if $a \geq \one$. That is, $\D(\alg{A})$ is obtained from $\alg{A}$ by applying the lattice-theoretic doubling construction to the element $\one$ and taking $\one$ to be the upper cover in the doubling.

\begin{fact}
  Let $\class{K}$ be a positive universal class of conic pointed lattices such that $\alg{A} \in \class{K}$ implies $\alg{A}_{-} \oplus \one \in \class{K}$. Then the variety of semi-$\class{K}$ pointed lattices is generated by prime-pointed lattices in $\class{K}$. If $\class{K}$ is closed under doubling at $\one$, then each semi-$\class{K}$ pointed lattice is a homomorphic image of a prime-pointed lattice in $\class{K}$.
\end{fact}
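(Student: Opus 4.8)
The plan is to prove both assertions by reducing an arbitrary member of $\class{K}$ to its two cones and then repairing the negative cone, which is the only cone that can fail to be prime-pointed. Throughout I would use that positive universal classes are closed under subalgebras and homomorphic images, and that the variety of semi-$\class{K}$ pointed lattices is exactly $\mathbb{V}(\class{K})$.

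For the first assertion, since $\mathbb{V}(\class{K}_{\pp}) \subseteq \mathbb{V}(\class{K})$ is immediate, it suffices to show $\class{K} \subseteq \mathbb{V}(\class{K}_{\pp})$. Fix $\alg{A} \in \class{K}$. As $\alg{A}$ is conic, hence semiconic, Fact~\ref{fact: sp of cones} presents $\alg{A}$ as a subdirect product of $\alg{A}_{+}$ and $\alg{A}_{-}$, so it is enough to place each cone in $\mathbb{V}(\class{K}_{\pp})$. The positive cone $\alg{A}_{+}$ is dually integral, so $\one$ is its bottom element and is join prime by our standing convention; moreover $\alg{A}_{+}$ is a subalgebra of $\alg{A}$, whence $\alg{A}_{+} \in \class{K}_{\pp}$. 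The negative cone $\alg{A}_{-}$ is integral and need not be prime-pointed, and this is exactly where the hypothesis enters: by assumption $\alg{A}_{-} \oplus \one \in \class{K}$, and being obtained from $\alg{A}_{-}$ by appending a new top it is an integral prime-pointed lattice, so $\alg{A}_{-} \oplus \one \in \class{K}_{\pp}$. Collapsing the new top onto the old top is a pointed-lattice congruence whose quotient is $\alg{A}_{-}$, exhibiting $\alg{A}_{-}$ as a homomorphic image of $\alg{A}_{-} \oplus \one$. Hence both cones lie in $\mathbb{V}(\class{K}_{\pp})$, and therefore so does their subdirect product $\alg{A} \in \mathbb{IS}(\alg{A}_{+} \times \alg{A}_{-})$.

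For the second assertion I would verify the claim for a member $\alg{A} \in \class{K}$ (the generators of the variety), using the doubling directly. Consider $\D(\alg{A})$, which lies in $\class{K}$ by the doubling hypothesis. Since $\alg{A}$ is conic, so is $\D(\alg{A})$, and in $\D(\alg{A})$ every element strictly below $\one$ lies below the new element $\one_{-} < \one$; thus $\one$ is join irreducible, hence join prime because in a conic pointed lattice $\one$ is join prime exactly when it is join irreducible. Therefore $\D(\alg{A}) \in \class{K}_{\pp}$. Finally, collapsing the doubled interval $\{\one_{-}, \one\}$ is a congruence on $\D(\alg{A})$ whose quotient is $\alg{A}$, so $\alg{A}$ is a homomorphic image of the single prime-pointed $\class{K}$-algebra $\D(\alg{A})$.

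The only real difficulty is the negative cone: an integral pointed lattice need not have a join irreducible top, so $\alg{A}_{-}$ itself may fail to be prime-pointed, and this is the whole reason the two closure hypotheses are imposed. Closure under $\alg{A} \mapsto \alg{A}_{-} \oplus \one$ supplies a prime-pointed cover of $\alg{A}_{-}$ for the first assertion, while closure under doubling supplies a prime-pointed cover of the entire algebra for the second. The remaining work is routine: checking that the two collapsing maps are genuine pointed-lattice congruences, and confirming that doubling a conic lattice at $\one$ preserves conicity and forces $\one$ to become join irreducible.
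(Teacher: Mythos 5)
Your proposal is correct, and it uses the same three ingredients as the paper -- the cone decomposition of a conic pointed lattice (Fact~\ref{fact: sp of cones}), the hypothesis $\alg{A}_{-} \oplus \one \in \class{K}$, and the collapse of a doubled point -- but arranges them differently. The paper runs \emph{both} assertions through the doubling: for $\alg{A} \in \class{K}$ it forms $\D(\alg{A})$, which is conic and hence a subdirect product of $\D(\alg{A})_{-} \iso \alg{A}_{-} \oplus \one \in \class{K}$ and $\D(\alg{A})_{+} \iso \alg{A}_{+} \in \class{K}$, both prime-pointed, and then collapses $\one_{-}$ onto $\one$ to recover $\alg{A}$; note that this part does not require closure under doubling, since $\D(\alg{A})$ is only shown to lie in the \emph{variety} generated by $\class{K}_{\pp}$. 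You instead decompose $\alg{A}$ itself into its cones and repair only the negative cone, exhibiting $\alg{A}_{-}$ as a homomorphic image of $\alg{A}_{-} \oplus \one \in \class{K}_{\pp}$, so the doubling enters only in your second assertion. Your arrangement is slightly more economical for the first assertion; the paper's buys the marginally stronger fact that each $\alg{A} \in \class{K}$ is a homomorphic image of a \emph{single} subdirect product of two prime-pointed $\class{K}$-algebras. All of your individual steps check out: $\alg{A}_{+}$ lies in $\class{K}$ by closure under subalgebras and is prime-pointed by the convention on bottom elements, the two collapse maps are genuine pointed-lattice congruences, and join irreducibility of $\one$ in $\D(\alg{A})$ upgrades to join primeness by conicity.

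One caveat, which applies equally to the paper's own proof: for the second assertion you treat only $\alg{A} \in \class{K}$ and assert that handling ``the generators of the variety'' suffices. That inference is not valid -- the property of being a homomorphic image of a prime-pointed lattice in $\class{K}$ does not pass from $\class{K}$ to arbitrary members of $\mathbb{ISP}(\class{K})$ -- and indeed the second assertion as literally stated is false: conicity is a positive universal property, so every homomorphic image of a member of $\class{K}$ is conic, yet a semi-$\class{K}$ pointed lattice need not be (for $\class{K}$ the class of all conic pointed lattices, which satisfies both hypotheses, the four-element Boolean lattice with $\one$ an atom is semiconic but not conic, hence is not a homomorphic image of any conic pointed lattice). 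Since the paper's proof establishes exactly the same restricted claim -- each member of $\class{K}$ is a homomorphic image of a prime-pointed lattice in $\class{K}$ -- this is a defect of the statement rather than a gap in your argument relative to the paper's; but you should delete the parenthetical suggestion that proving the claim on generators yields it for the whole quasivariety.
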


\begin{proof}
  Each pointed lattice $\alg{A} \in \class{K}$ is conic, therefore so is $\D(\alg{A})$. Up to isomorphism, $\D(\alg{A})$ is thus a subdirect product of $\D(\alg{A})_{-} \iso \alg{A}_{-} \oplus \one \in \class{K}$ and $\D(\alg{A})_{+} \iso \alg{A}_{+} \in \class{K}$. But both $\alg{A}_{-} \oplus \one$ and $\alg{A}_{+}$ are prime-pointed, so $\D(\alg{A})$ is in the variety generated by prime-pointed lattices in $\class{K}$. Finally, the homomorphism $h\colon \D(\alg{A}) \to \alg{A}$ such that $h(\one_{-}) = \one$ and $h(a) = a$ for $a \in \alg{A}$ exhibits $\alg{A}$ as a homomorphic image of~$\alg{B}$.
\end{proof}

\begin{fact}
  The variety of all pointed lattices is generated by prime-pointed lattices. More precisely, each pointed lattice is a homomorphic image of a prime-pointed~one.
\end{fact}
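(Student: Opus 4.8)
The plan is to prove the stronger ``more precisely'' clause directly: for an arbitrary pointed lattice $\alg{A}$ I would construct a prime-pointed lattice $\alg{B}$ together with a surjective pointed-lattice homomorphism $\alg{B} \onto \alg{A}$. The assertion that the variety of all pointed lattices is generated by prime-pointed lattices is then immediate, since $\alg{A} \in \mathbb{H}(\{\alg{B}\}) \subseteq \mathbb{HSP}(\text{prime-pointed lattices})$. The construction generalizes the doubling $\D$ used above: rather than doubling the single element $\one$, I would double the entire principal filter $\up\one$. Concretely, let $\alg{B}$ be the lattice with universe $(A \setminus \up\one) \cup (\up\one \times \Two)$, where $\up\one \times \Two$ carries the product order, the order on $A \setminus \up\one$ is inherited from $\alg{A}$, and $x \leq (c,i) \iff x \leq c$ while $(c,i) \leq x \iff c \leq x$ for $x \in A \setminus \up\one$. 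This is the lattice-theoretic doubling of the convex set $\up\one$, hence a lattice, and I take the distinguished constant to be $\one^{\alg{B}} \assign (\one, 1)$, the bottom element of the upper copy of the filter.

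First I would check that the natural collapse $q \colon \alg{B} \onto \alg{A}$, sending $(c,i) \mapsto c$ and fixing $A \setminus \up\one$, is a surjective lattice homomorphism with $q(\one^{\alg{B}}) = \one$. This is the standard quotient property of the doubling construction; the only thing to verify is that meets and joins which land inside the filter are computed compatibly in the two copies, and this uses that $\up\one$ is closed under both $\vee$ and $\wedge$, being a principal filter. Thus $\alg{A}$ is a homomorphic image of $\alg{B}$ as a pointed lattice, and it remains only to see that $\alg{B}$ is prime-pointed.

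The crux is to verify that $\one^{\alg{B}} = (\one,1)$ is join prime, equivalently that the complement of its principal upset is an ideal. The key observation is that $\up (\one,1)$ is exactly the upper copy $\up\one \times \{1\}$, so its complement is $(A \setminus \up\one) \cup (\up\one \times \{0\})$, which is clearly a downset. I would then show it is closed under binary joins: the join of two elements of $A \setminus \up\one$ either stays outside the filter or lands in the lower copy $\up\one \times \{0\}$, and likewise no join of two elements avoiding the upper copy can reach it. This is precisely where the earlier doubling $\D(\alg{A})$ of the single element $\one$ breaks down for non-conic $\alg{A}$: an element $y$ incomparable to $\one$ with $y \vee \one_{-} \geq \one$ would already reach the unit, whereas doubling the whole filter leaves enough room to push every such join down into the lower copy. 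I expect this join-closure check --- a short case analysis turning on whether the relevant meets and joins fall inside or outside $\up\one$ --- to be the main, though routine, obstacle.

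As an alternative one could note that $\alg{A}$ is a quotient of the free pointed lattice on $A$, in which $\one$ is a free generator and therefore join prime by Whitman's solution of the word problem for free lattices; but the doubling argument has the advantage of being elementary and self-contained, staying within the constructions already introduced.
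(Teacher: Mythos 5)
Your proposal is correct, and the lattice you construct is in fact the paper's construction in different clothing: the doubling of the filter $\up\one$ with constant $(\one,1)$ is isomorphic, as a pointed lattice, to the subalgebra $\alg{B} \leq \alg{A} \times \Two$ with universe $\set{\pair{a}{\zero}}{a \in \alg{A}} \cup \set{\pair{p}{\one}}{p \in \up \one}$ that the paper uses, via $x \mapsto \pair{x}{\zero}$ for $x \notin \up\one$, $(c,0) \mapsto \pair{c}{\zero}$ and $(c,1) \mapsto \pair{c}{\one}$ for $c \in \up\one$; your collapse map $q$ then becomes the paper's projection $\pi\colon \alg{B} \to \alg{A}$. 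The difference lies only in the verification burden each packaging carries. Realizing the algebra inside the product $\alg{A} \times \Two$ makes everything nearly free: closure of $B$ under the product meets and joins is a one-line check (using that $\up\one$ is a filter), surjectivity of the projection is immediate, and join primeness of $\pair{\one}{\one}$ is read off from the $\Two$-coordinate. Your presentation must instead establish that doubling the convex set $\up\one$ yields a lattice whose collapse is a homomorphism (Day's doubling, or the verification you sketch) and then run the join-closure case analysis on the complement of $\up(\one,1)$; all of these steps do go through --- the decisive point in every case being that an upper bound in $\alg{B}$ of any element of $\up\one \times \Two$ again lies in $\up\one \times \Two$, so a join that reaches the filter lands in its lower copy --- but none of them comes for free. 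Your parenthetical alternative is also a valid, genuinely different proof: in the free pointed lattice on the set $A$ (the free lattice on $A$ plus one extra generator interpreted as $\one$) the generator $\one$ is join prime by Whitman's condition, and $\alg{A}$ is a homomorphic image of it. Finally, your diagnosis of why doubling only the element $\one$ fails for non-conic $\alg{A}$ is exactly right, and it explains why the paper's preceding fact involving $\D$ is restricted to conic classes $\class{K}$.
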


\begin{proof}
  Let $\alg{A}$ be a pointed lattice and $P \assign \up \one$ be its positive cone, let $\Two$ be the integral two-element pointed lattice $\zero < \one$, and let
\begin{align*}
  B \assign \set{\pair{a}{\zero}}{a \in \alg{A}} \cup \set{\pair{p}{\one}}{p \in \up \one}.
\end{align*}
  A simple case analysis shows that $B$ is the universe of an algebra $\alg{B} \leq \alg{A} \times \Two$. Clearly $\alg{A}$ is a homomorphic image of $\alg{B}$ via the projection map $\pi\colon \alg{B} \to \alg{A}$, and $\alg{B}$ is prime-pointed: if $\pair{\one}{\one} \leq \pair{a}{u} \vee \pair{b}{v}$, then either $u = \one$ (in which case $a \in \up \one$) or $v = \one$ (in which case $b \in \up \one$), so either $\pair{\one}{\one} \leq \pair{a}{u}$ or $\pair{\one}{\one} \leq \pair{b}{v}$.
\end{proof}

  Finally, we shall need to introduce ideal completions of pointed lattices. The \emph{ideal completion} of a (pointed) lattice $\alg{A}$ is the lattice $\Idl \alg{A}$ of all ideals of $\alg{A}$ ordered by inclusion (equipped with the constant $\down \one$). The pointed lattice $\alg{A}$ embeds into $\Idl \alg{A}$ via the map $a \mapsto \down a$. This embedding is an isomorphism in case $\alg{A}$ is finite, since each ideal of a finite lattice is principal.

  It is well known that the ideal completion of a lattice $\alg{A}$ satisfies the same equations as $\alg{A}$~\cite[9.1]{CrawleyDilworth}. This fact extends to positive universal sentences.

\begin{lemma}[{\cite[proof of 9.1]{CrawleyDilworth}}] \label{lemma: ideal terms}
  Let $\alg{A}$ be a pointed lattice, $t(x_{1}, \dots, x_{n})$ be a pointed lattice term, and $I_{1}, \dots, I_{n}$ be ideals of $\alg{A}$. Then
\begin{align*}
  t^{\Idl \alg{A}}(I_{1}, \dots, I_{n}) = \set{a \in \alg{A}}{a \leq t^{\alg{A}}(i_{1}, \dots, i_{n}) \text{ for some } i_{1} \in I_{1}, \dots, i_{n} \in I_{n}}.
\end{align*}
\end{lemma}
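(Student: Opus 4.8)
The plan is to argue by induction on the complexity of the pointed lattice term $t$, writing $t^{\ast}(I_{1}, \dots, I_{n})$ for the set on the right-hand side of the claimed identity. First I would record how the operations are computed in $\Idl \alg{A}$: the meet of two ideals is their intersection, the constant $\one$ is interpreted as $\down \one$, and the join of two ideals is
\begin{align*}
  I \vee J = \set{a \in \alg{A}}{a \leq i \vee j \text{ for some } i \in I \text{ and } j \in J}.
\end{align*}
It is also worth noting at the outset that, since each $I_{k}$ is nonempty and lattice term functions are order preserving, $t^{\ast}(I_{1}, \dots, I_{n})$ is always a downset closed under binary joins, hence a genuine ideal, so the asserted equality at least makes sense.

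For the base cases, if $t$ is a variable $x_{j}$ then $t^{\ast}(I_{1}, \dots, I_{n}) = \set{a}{a \leq i_{j} \text{ for some } i_{j} \in I_{j}}$, which equals $I_{j}$ because $I_{j}$ is a nonempty downset, matching $x_{j}^{\Idl \alg{A}} = I_{j}$. If $t$ is the constant $\one$, then $t^{\ast}(I_{1}, \dots, I_{n}) = \down \one = \one^{\Idl \alg{A}}$, again using that every $I_{k}$ is nonempty so that some witnessing tuple exists.

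The inductive step is where the one real idea enters. Suppose the claim holds for subterms $s$ and $r$. The difficulty is that a witness for membership in $s^{\ast}$ comes with a tuple $\tuple{\imath} \in \prod_{k} I_{k}$ while a witness for $r^{\ast}$ comes with a possibly different tuple $\tuple{\jmath}$, and to evaluate $s \vee r$ or $s \wedge r$ at a single argument I need to reconcile them. This is exactly where directedness of ideals is used: the componentwise joins $k_{l} \assign i_{l} \vee j_{l}$ again lie in $I_{l}$, and by monotonicity of lattice term functions $s^{\alg{A}}(\tuple{\imath}) \leq s^{\alg{A}}(\tuple{k})$ and $r^{\alg{A}}(\tuple{\jmath}) \leq r^{\alg{A}}(\tuple{k})$. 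For $t = s \vee r$ this lets me absorb any element below $u \vee v$ with $u \in s^{\ast}$ and $v \in r^{\ast}$ into $\set{a}{a \leq (s \vee r)^{\alg{A}}(\tuple{k})}$, and conversely; for $t = s \wedge r$ I observe that an element of $s^{\ast} \cap r^{\ast}$ lies below both $s^{\alg{A}}(\tuple{k})$ and $r^{\alg{A}}(\tuple{k})$, hence below their meet $(s \wedge r)^{\alg{A}}(\tuple{k})$, while the reverse inclusion is immediate from $(s \wedge r)^{\alg{A}} \leq s^{\alg{A}}, r^{\alg{A}}$.

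I expect the main (and only) obstacle to be precisely this reconciliation of witnessing tuples across the two subterms; once directedness of ideals is invoked to pass to the common tuple $\tuple{k}$, every inclusion reduces to monotonicity of lattice terms together with the explicit descriptions of $\vee$ and $\wedge$ in $\Idl \alg{A}$. No distributivity or other special property of $\alg{A}$ is needed, which is what makes the argument transfer verbatim to positive universal sentences in the surrounding discussion.
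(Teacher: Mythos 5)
Your proof is correct and is essentially the argument the paper relies on: the paper gives no proof of its own but defers to the proof of 9.1 in \cite{CrawleyDilworth}, which is exactly this induction on term complexity, using closure of ideals under binary joins (directedness) to reconcile the witnessing tuples and monotonicity of term functions to conclude. Nothing is missing.
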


\begin{fact} \label{fact: idl preserves positive universal sentences}
  The ideal completion $\Idl \alg{A}$ of a pointed lattice $\alg{A}$ satisfies the same positive universal sentences as $\alg{A}$.
\end{fact}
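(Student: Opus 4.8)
The plan is to establish the two implications separately, writing a positive universal sentence as $\varphi = \bigvee_{j=1}^{k} (t_{j} \equals s_{j})$ (implicitly universally quantified), where each $t_{j}$ and $s_{j}$ is a pointed lattice term in the variables $x_{1}, \dots, x_{n}$. One implication is immediate: the map $a \mapsto \down a$ embeds $\alg{A}$ into $\Idl \alg{A}$ as a pointed sublattice, since it sends $\one$ to the designated constant $\down \one$ and commutes with $\vee$ and $\wedge$. As universal sentences (in particular positive universal ones) are inherited by subalgebras, $\Idl \alg{A} \vDash \varphi$ will immediately give $\alg{A} \vDash \varphi$.

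For the converse I would assume $\alg{A} \vDash \varphi$, fix ideals $I_{1}, \dots, I_{n}$ of $\alg{A}$, and aim to produce some index $j$ with $t_{j}^{\Idl \alg{A}}(\bar{I}) = s_{j}^{\Idl \alg{A}}(\bar{I})$. The main tool is Lemma~\ref{lemma: ideal terms}, which identifies each ideal $t_{j}^{\Idl \alg{A}}(\bar{I})$ with the downset of the elements $t_{j}^{\alg{A}}(\bar{i})$ as $\bar{i}$ ranges over $I_{1} \times \dots \times I_{n}$. Arguing by contradiction, I would suppose that $t_{j}^{\Idl \alg{A}}(\bar{I}) \neq s_{j}^{\Idl \alg{A}}(\bar{I})$ for every $j$. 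Since the disjunct $t_{j} \equals s_{j}$ is symmetric, after relabelling $t_{j}$ and $s_{j}$ separately for each $j$ I may select an element $c_{j}$ together with a tuple $\bar{i}^{(j)} \in I_{1} \times \dots \times I_{n}$ such that $c_{j} \leq t_{j}^{\alg{A}}(\bar{i}^{(j)})$ while $c_{j} \not\leq s_{j}^{\alg{A}}(\bar{i})$ for all $\bar{i} \in I_{1} \times \dots \times I_{n}$.

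The crux, and the step I expect to be the only real obstacle, is that these witnessing tuples $\bar{i}^{(j)}$ vary with $j$, so no single tuple need exhibit the failure of all disjuncts simultaneously. I would overcome this using two facts: ideals are closed under finite joins, and pointed lattice terms are monotone in each argument. Setting $\bar{i}^{*} \assign \bigvee_{j=1}^{k} \bar{i}^{(j)}$ (a componentwise join, which still lies in $I_{1} \times \dots \times I_{n}$), monotonicity yields $c_{j} \leq t_{j}^{\alg{A}}(\bar{i}^{(j)}) \leq t_{j}^{\alg{A}}(\bar{i}^{*})$ for every $j$. Evaluating $\varphi$ at $\bar{i}^{*}$ in $\alg{A}$ then provides an index $j_{0}$ with $t_{j_{0}}^{\alg{A}}(\bar{i}^{*}) = s_{j_{0}}^{\alg{A}}(\bar{i}^{*})$, whence $c_{j_{0}} \leq s_{j_{0}}^{\alg{A}}(\bar{i}^{*})$ with $\bar{i}^{*} \in I_{1} \times \dots \times I_{n}$. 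By Lemma~\ref{lemma: ideal terms} this forces $c_{j_{0}} \in s_{j_{0}}^{\Idl \alg{A}}(\bar{I})$, contradicting the choice of $c_{j_{0}}$. The contradiction shows that some disjunct of $\varphi$ holds in $\Idl \alg{A}$, which is exactly what is needed.
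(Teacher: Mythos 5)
Your proof is correct and follows essentially the same route as the paper's: the easy direction via the embedding $a \mapsto \down a$, and the converse via Lemma~\ref{lemma: ideal terms} combined with exactly the paper's key trick of taking componentwise joins of the per-disjunct witness tuples, using closure of ideals under finite joins and monotonicity of lattice terms. The only difference is organizational: the paper first isolates a disjunct holding cofinally in $I_{1} \times \dots \times I_{n}$ and then transfers it to $\Idl \alg{A}$, whereas you run the contradiction directly against the simultaneous failure of all disjuncts in $\Idl \alg{A}$; the underlying argument is the same.
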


\begin{proof}
  $\alg{A}$ is isomorphic to a subalgebra of $\Idl \alg{A}$, so each positive universal sentence valid in $\Idl \alg{A}$ also holds in $\alg{A}$. Conversely, consider a positive universal sentence
\begin{align*}
  \Phi \assign t_{1}(x_{1}, \dots, x_{n}) \equals u_{1}(x_{1}, \dots, x_{n}) \text{ or } \dots \text{ or } t_{k}(x_{1}, \dots, x_{n}) \equals u_{k}(x_{1}, \dots, x_{n}),
\end{align*}
  such that $\Phi$ holds in $\alg{A}$. Let $I_{1}, \dots I_{n}$ be ideals of $\alg{A}$. We claim that there is some $i \in \{ 1, \dots, k \}$ such that $t_{k}(x_{1}, \dots, x_{n}) \equals u_{k}(x_{1}, \dots, x_{n})$ holds cofinally in $I_{1} \times \dots \times I_{n}$, i.e.\ for all $i_{1} \in I_{1}, \dots, i_{n} \in I_{n}$ there are some $j_{1}, \dots, j_{n} \in \alg{A}$ with $i_{1} \leq j_{1} \in I_{1}, \dots, i_{n} \leq j_{n} \in I_{n}$ such that $t_{k}^{\alg{A}}(j_{1}, \dots, j_{n}) = u_{k}^{\alg{A}}(j_{1}, \dots, j_{n})$: otherwise, for each $i \in \{ 1, \dots, k \}$ there would be some $i^{k}_{1} \in I_{1}, \dots, i^{k}_{n} \in I_{n}$ such that $t_{k}^{\alg{A}}(j_{1}, \dots, j_{n}) = u_{k}^{\alg{A}}(a_{1}, \dots, a_{n})$ never holds for $i^{k}_{1} \leq j_{1} \in I_{1}, \dots, i^{k}_{n} \leq j_{n} \in I_{n}$, in which case interpreting $x_{m}$ for $m \in \{ 1, \dots, k \}$ as $i^{m}_{1} \vee \dots \vee i^{m}_{k}$ would falsify $\Phi$ in~$\alg{A}$. Finally, if $t_{k}(x_{1}, \dots, x_{n}) \equals u_{k}(x_{1}, \dots, x_{n})$ holds cofinally in $I_{1} \times \dots \times I_{n}$, then $t_{k}^{\Idl \alg{A}}(I_{1}, \dots, I_{n}) = u_{k}^{\Idl \alg{A}}(I_{1}, \dots, I_{n})$ by Lemma~\ref{lemma: ideal terms}. Thus $\Phi$ holds in $\Idl \alg{A}$.
\end{proof}

\begin{lemma} \label{lemma: join prime in idl}
  If $\one$ is a join prime element of a lattice $\alg{A}$, then $\down \one$ is a completely join prime element of $\Idl \alg{A}$.
\end{lemma}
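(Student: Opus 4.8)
The plan is to unwind what it means for $\down\one$ to sit below an arbitrary join in $\Idl\alg{A}$ and to reduce this to the ordinary (binary) join primeness of $\one$ in $\alg{A}$. The key observation is that joins in the ideal completion are \emph{finitary}: even though complete join primeness quantifies over an arbitrary family of ideals, membership of $\one$ in their join is always witnessed by a finite subjoin. This is exactly what lets us upgrade the binary join primeness of $\one$ to the complete join primeness of $\down\one$, and it is where the word ``completely'' is earned essentially for free.

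Concretely, I would start from a family $(J_{i})_{i \in I}$ of ideals of $\alg{A}$ with $\down\one \leq \bigvee_{i \in I} J_{i}$ in $\Idl\alg{A}$. Since $\down\one \subseteq K$ holds for a downset $K$ precisely when $\one \in K$, this inclusion says that $\one \in \bigvee_{i \in I} J_{i}$. Now the join $\bigvee_{i \in I} J_{i}$ is the directed union of the joins of its finite subfamilies, and each finite join $J_{i_{1}} \vee \dots \vee J_{i_{m}}$ is described by Lemma~\ref{lemma: ideal terms} (applied to the term $x_{1} \vee \dots \vee x_{m}$) as the set of all $a \in \alg{A}$ lying below $k_{1} \vee \dots \vee k_{m}$ for some $k_{l} \in J_{i_{l}}$. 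Hence there are finitely many $j_{1} \in J_{i_{1}}, \dots, j_{n} \in J_{i_{n}}$ with $\one \leq j_{1} \vee \dots \vee j_{n}$.

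Next I would lift join primeness from two arguments to finitely many by an easy induction on $n$, the inductive step applying the join primeness of $\one$ to $\one \leq (j_{1} \vee \dots \vee j_{n-1}) \vee j_{n}$. This produces an index $k$ with $\one \leq j_{k}$. Since $j_{k} \in J_{i_{k}}$ and $J_{i_{k}}$ is a downset, $\one \leq j_{k}$ forces $\one \in J_{i_{k}}$, that is $\down\one \subseteq J_{i_{k}}$, which is exactly $\down\one \leq J_{i_{k}}$ in $\Idl\alg{A}$. Thus some member of the family already dominates $\down\one$, establishing complete join primeness.

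I do not expect any substantial obstacle beyond bookkeeping; the genuine content is the finitary description of ideal joins, which trivializes the passage from finite to arbitrary families. The only point needing a word of care is the degenerate case in which $\one$ is the least element of $\alg{A}$, so that $\down\one$ is the bottom of $\Idl\alg{A}$ and one must also account for the empty family. In that situation the statement holds by the convention adopted above, under which the bottom element is counted as completely join prime.
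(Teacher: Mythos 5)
Your proof is correct and follows essentially the same route as the paper's: reduce the inclusion $\down\one \subseteq \bigvee_i J_i$ to a finite inequality $\one \leq j_1 \vee \dots \vee j_n$ with $j_l$ drawn from finitely many of the ideals, then apply join primeness of $\one$ to locate a single $j_k \geq \one$ and hence a single ideal containing $\down\one$. The paper's version is simply more condensed, leaving implicit the points you spell out (the finitary description of ideal joins, the induction from binary to finitary primeness, and the degenerate bottom-element case).
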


\begin{proof}
  If $\down \one \subseteq \bigvee \mathcal{I}$ in $\Idl \alg{A}$ for $\mathcal{I} \subseteq \Idl \alg{A}$, then there are ideals $I_{1}, \dots, I_{n} \in \mathcal{I}$ and elements $i_{1} \in I_{1}, \dots, i_{n} \in I_{n}$ such that $\one \leq i_{1} \vee \dots \vee i_{n}$ in $\alg{A}$. Thus $\one \leq i_{k}$ for some $k \in \{ 1, \dots, n \}$ and $\down \one \subseteq I_{k}$, proving that $\down \one$ is completely join prime.
\end{proof}

\section{Semi-prime-pointed lattices}
\label{sec: spp}

  In this section, which can be skipped without loss of continuity, we axiomatize the quasivariety of semi-prime-pointed lattices by a conjunction of two conditions: up-distributivity at $\one$ and decomposability at $\one$. The former is a quasi-equation, while the latter is a conjunction of infinitely many quasi-equations. We also describe relatively finitely subdirectly irreducible algebras in the quasivariety of semiconic semi-prime-pointed lattices.

\begin{definition} \label{def: sd}
  A pointed lattice is said to be \emph{up-distributive at $\one$} if it satisfies the following quasi-equation:
\begin{align*}
  x \vee y \gequals \one ~ \& ~ x \vee z \gequals \one \implies x \vee (y \wedge z) \gequals \one.
\end{align*}
  It is \emph{join semidistributive at $\one$} if it satisfies the following quasi-equation:
\begin{align*}
  x \vee y \equals \one ~ \& ~ x \vee z \equals \one \implies x \vee (y \wedge z) \equals \one.
\end{align*}
\end{definition}

  The term \emph{join semidistributive at $\one$} is taken from~\cite{JKP}. In the unital case, the term \emph{$\one$-distributive} is also used~\cite[p.~133]{Stern}.

\begin{fact} \label{fact: semidistributive}
  The following are equivalent for each pointed lattice $\alg{A}$:
\begin{enumerate}[(i)]
\item $\alg{A}$ is join semidistributive at $\one$.
\item $\alg{A}_{-}$ is up-distributive at $\one$.
\end{enumerate}
  In case $\alg{A} \vDash \one \wedge (x \vee y) \equals (\one \wedge x) \vee (\one \wedge y)$, these conditions are equivalent to:
\begin{enumerate}[(i)]
\setcounter{enumi}{3}
\item $\alg{A}$ is up-distributive at $\one$.
\end{enumerate}
\end{fact}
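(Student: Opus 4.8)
The plan is to reduce both quasi-equations to the negative cone $\alg{A}_{-}$. The key observation is that the premise $x \vee y = \one$ forces $x \leq \one$ and $y \leq \one$, since $\one$ is then an upper bound of both; likewise $x \vee z = \one$ forces $z \leq \one$. Hence every assignment satisfying the premises of the join-semidistributivity-at-$\one$ quasi-equation sends $x, y, z$ into $\down \one$, and the conclusion $x \vee (y \wedge z)$ is then evaluated there as well. Since $\down \one$ is closed under joins as well as meets, it is a sublattice, so $\alg{A}$ is join semidistributive at $\one$ if and only if $\alg{A}_{-}$ is. Moreover, inside $\alg{A}_{-}$ every element lies below $\one$, so for every term $u$ the conditions $u \geq \one$ and $u = \one$ coincide; consequently up-distributivity at $\one$ and join semidistributivity at $\one$ are literally the same statement when interpreted in $\alg{A}_{-}$. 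Chaining these two observations gives the equivalence of (i) and (ii) with essentially no computation.

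To add (iv) under the hypothesis $\alg{A} \vDash \one \wedge (x \vee y) \equals (\one \wedge x) \vee (\one \wedge y)$, I would treat the two implications separately. The direction (iv) $\Rightarrow$ (i) needs no hypothesis: from $x \vee y = \one$ and $x \vee z = \one$ up-distributivity gives $x \vee (y \wedge z) \geq \one$, while the premises again force $x, y, z \leq \one$ and hence $x \vee (y \wedge z) \leq \one$, so equality holds. For (i) $\Rightarrow$ (iv) I would start from $x \vee y \geq \one$ and $x \vee z \geq \one$, meet with $\one$, and invoke the hypothesis to obtain $(\one \wedge x) \vee (\one \wedge y) = \one \wedge (x \vee y) = \one$ and similarly $(\one \wedge x) \vee (\one \wedge z) = \one$. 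Setting $a \assign \one \wedge x$, $b \assign \one \wedge y$, $c \assign \one \wedge z$, these are elements with $a \vee b = \one$ and $a \vee c = \one$, so join semidistributivity at $\one$ yields $a \vee (b \wedge c) = \one$. Since $b \wedge c = \one \wedge (y \wedge z)$, one application of the hypothesis in reverse gives $\one \wedge (x \vee (y \wedge z)) = a \vee (b \wedge c) = \one$, which is exactly $x \vee (y \wedge z) \geq \one$.

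The step requiring the most care is this final direction. Unlike the equality premises of join semidistributivity, the inequality premises $x \vee y \geq \one$ of up-distributivity do not force the variables into the negative cone, so the reduction used for (i) $\Leftrightarrow$ (ii) is unavailable. The role of the hypothesis is precisely to convert these inequalities into join-equal-$\one$ identities among the truncations $\one \wedge x$, $\one \wedge y$, $\one \wedge z$, where join semidistributivity applies, and then to transport the resulting identity back up to $\alg{A}$. Everything else amounts to the observation that the equality premises are already strong enough to collapse the problem into $\alg{A}_{-}$.
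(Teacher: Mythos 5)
Your proof is correct. The paper states this Fact without proof, treating it as routine, and your argument --- collapsing the equality premises of join semidistributivity into the negative cone $\alg{A}_{-}$ (where $u \geq \one$ and $u = \one$ coincide), and for the added equivalence transporting the inequality premises through the truncation $x \mapsto \one \wedge x$ via the hypothesis $\one \wedge (x \vee y) \equals (\one \wedge x) \vee (\one \wedge y)$ --- is precisely the intended argument, matching the paper's own remark that such statements transfer between $\alg{A}$ and $\alg{A}_{-}$ by substituting $\one \wedge x$ for each variable $x$.
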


  A finite pointed lattice is up-distributive at $\one$ if and only if for each $x$ there is a smallest~$y$ such that $x \vee y \geq \one$ (namely, the meet of all $z$ such that $x \vee z \geq \one$). Similarly, a finite pointed lattice is join semidistributive at $\one$ if and only if for each $x \leq \one$ there is a smallest~$y \leq \one$ such that $x \vee y = \one$.

  If a finite unital lattice $\alg{A}$ is join semidistributive at $\one$, then $\one$ has a canonical join representation in $\alg{A}$~\cite{JonssonKiefer}. That is, $\one = a_{1} \vee \dots \vee a_{m}$ for some $a_{1}, \dots, a_{m} \in \alg{A}$ such that if $\one = b_{1} \vee \dots \vee b_{n}$ for some $b_{1}, \dots, b_{n} \in \alg{A}$, then each $a_{i}$ lies below some~$b_{j}$. Clearly each element of such a canonical join representation must be join prime, so $\one$ is a join of join prime elements. We now show, extending this fact beyond the finite and unital case, that in a pointed lattice $\alg{A}$ the $\one$-filter $\up \one$ is an intersection of prime $\one$-filters if and only if $\alg{A}$ is up-distributive at $\one$. (Recall that a \emph{$\one$-filter} is a lattice filter containing $\one$.)

\begin{definition}
  Let $h\colon \alg{A} \to \alg{B}$ be a homomorphism of pointed lattices. Then the \emph{positive kernel} of $h$ is the $\one$-filter $h^{-1}[\up \one^{\alg{B}}] \subseteq \alg{A}$. By extension, the positive kernel of a congruence $\theta$ of $\alg{A}$ is the positive kernel of the quotient map $\alg{A} \to \alg{A} / \theta$.
\end{definition}

  In the following lemma, we allow for the empty intersection of prime filters, which is understood to be the total filter.

\begin{lemma} \label{lemma: prime}
  The following are equivalent for each pointed lattice $\alg{A}$:
\begin{enumerate}[(i)]
\item $\alg{A}$ is up-distributive at $\one$.
\item The positive cone $\up \one$ of $\alg{A}$ is an intersection of prime $\one$-filters.
\item Each $\one$-proper ideal of $\alg{A}$ extends to a prime $\one$-proper ideal.
\item There is a homomorphism $h\colon \alg{A} \to \alg{B}$ into a distributive pointed (unital) lattice $\alg{B}$ whose positive kernel is $\up \one$.
\end{enumerate}
\end{lemma}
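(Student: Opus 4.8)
The plan is to establish the cycle of implications (i) $\Rightarrow$ (iii) $\Rightarrow$ (ii) $\Rightarrow$ (iv) $\Rightarrow$ (i), since each individual link is then short and the arrangement isolates the single place where up-distributivity does any real work. The overall shape is that of a prime-filter separation theorem localized at $\one$: up-distributivity will play the role that global distributivity plays in the classical prime ideal theorem, namely the hypothesis that lets a maximal $\one$-proper ideal be taken prime.

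For (i) $\Rightarrow$ (iii), given a $\one$-proper ideal $I$, I would apply Zorn's lemma to the poset of $\one$-proper ideals containing $I$; this poset is closed under unions of chains, since $\one$ lies in a union of a chain of ideals only if it already lies in one member of the chain. This yields a maximal such ideal $J$, and the heart of the whole lemma is showing that $J$ is prime. If $a \wedge b \in J$ while $a, b \notin J$, then maximality forces $\one$ into the ideals generated by $J \cup \{a\}$ and by $J \cup \{b\}$, so $\one \leq j \vee a$ and $\one \leq j \vee b$ for a common $j \in J$ (take the join of the two witnesses). Up-distributivity at $\one$ then gives $\one \leq j \vee (a \wedge b) \in J$, contradicting $\one \notin J$. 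This is exactly the step the quasi-equation is designed to supply, and I expect it to be the main obstacle only in the sense that one must recognize up-distributivity as precisely what closes this gap.

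The remaining links are routine. For (iii) $\Rightarrow$ (ii): for each $a \not\geq \one$ the principal ideal $\down a$ is $\one$-proper, so it extends to a prime $\one$-proper ideal $I_{a}$, and its complement $F_{a} \assign \alg{A} \setminus I_{a}$ is a prime $\one$-filter (the complement of a prime ideal is a prime filter), contains $\up \one$ (as $I_{a}$ is a $\one$-proper downset), and omits $a$; hence $\up \one = \bigcap_{a} F_{a}$, with the empty-intersection convention covering the dually integral case $\up \one = \alg{A}$. For (ii) $\Rightarrow$ (iv): writing $\up \one = \bigcap_{i} F_{i}$, each prime $\one$-filter $F_{i}$ yields a pointed lattice homomorphism $\chi_{i} \colon \alg{A} \to \Two$, namely its characteristic function, which is a homomorphism precisely because $F_{i}$ is a prime filter containing $\one$ (join preservation uses primeness together with the upset property, meet preservation uses closure under meets, and the constant is preserved since $\one \in F_{i}$). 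The product map $h = (\chi_{i})_{i} \colon \alg{A} \to \Two^{I}$ then lands in a distributive unital lattice, and its positive kernel is $h^{-1}[\{\top\}] = \bigcap_{i} F_{i} = \up \one$, which also confirms that $\alg{B}$ may be taken unital as in the parenthetical.

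Finally, (iv) $\Rightarrow$ (i) is where distributivity of the codomain is used: if $x \vee y \geq \one$ and $x \vee z \geq \one$, then $x \vee y$ and $x \vee z$ lie in the positive kernel $\up \one$, so $h(x) \vee h(y)$ and $h(x) \vee h(z)$ both lie in $\up \one^{\alg{B}}$, and distributivity of $\alg{B}$ gives $h(x \vee (y \wedge z)) = (h(x) \vee h(y)) \wedge (h(x) \vee h(z)) \geq \one^{\alg{B}}$. This places $x \vee (y \wedge z)$ in the positive kernel $\up \one$, that is $x \vee (y \wedge z) \geq \one$, as required, closing the cycle.
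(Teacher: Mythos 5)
Your proof is correct, and its three substantive steps --- Zorn's lemma plus up-distributivity to make a maximal $\one$-proper ideal prime, taking complements to pass from prime $\one$-proper ideals to prime $\one$-filters, and bundling the characteristic maps into a homomorphism to a power of $\Two$ --- are exactly the paper's arguments for (i) $\Rightarrow$ (iii), (iii) $\Rightarrow$ (ii), and (ii) $\Rightarrow$ (iv). Where you differ is in how the loop is closed. The paper proves (ii) $\Rightarrow$ (i) directly (if $x \vee y$ and $x \vee z$ lie in every prime $\one$-filter $F_{i}$, then so does $x \vee (y \wedge z)$, by a case split on whether $x \in F_{i}$), and then treats (iv) as an appendage, proving (iv) $\Rightarrow$ (ii) by invoking the fact that every $\one$-filter of a distributive pointed lattice is an intersection of prime $\one$-filters --- a second appeal to a prime-filter existence theorem. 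You instead arrange all four conditions into a single cycle and close it with (iv) $\Rightarrow$ (i), which is a purely equational computation: $h(x \vee (y \wedge z)) = (h(x) \vee h(y)) \wedge (h(x) \vee h(z)) \geq \one$ in the distributive codomain, hence $x \vee (y \wedge z)$ lies in the positive kernel $\up \one$. This arrangement is slightly more economical: condition (iv) does real work in your cycle rather than being verified as a separate equivalence, and your argument uses no choice-dependent fact beyond the single application of Zorn's lemma in (i) $\Rightarrow$ (iii).
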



\begin{proof}
  (i) $\Rightarrow$ (iii): let $I$ be a $\one$-proper ideal on $\alg{A}$. By Zorn's lemma, $I$ extends to a maximal $\one$-proper ideal~$J$. To see that $J$ is prime, consider $a, b \in \alg{A}$ such that $a, b \notin J$. By the maximality of $J$, there are $j_{1}, j_{2} \in J$ such that $a \vee j_{1} \geq \one$ and $b \vee j_{2} \geq \one$. Taking $j \assign j_{1} \vee j_{2} \in J$, one obtains that $a \vee j \geq \one$ and $b \vee j \geq \one$, so $(a \wedge b) \vee j \geq \one$ by up-distributivity at~$\one$, and thus indeed $a \wedge b \notin J$.

  (iii) $\Rightarrow$ (ii): for each $a \in \alg{A}$ outside the positive cone the downset $\down a$ is a $\one$-proper ideal, therefore it extends to a prime $\one$-proper ideal $I_{a}$. Its complement $F_{a} \assign L - A$ is a prime $\one$-filter such that $a \notin F_{a}$. Thus $\up \one = \bigcap_{a \in \alg{A}} F_{a}$.

  (ii) $\Rightarrow$ (i): suppose that $\up \one = \bigcap_{i \in I} F_{i}$ for some family of prime filters $F_{i}$. If $x \vee y \geq \one$ and $x \vee z \geq \one$, then $x \vee y \in F_{i}$ and $x \vee z \in F_{i}$ for each $i \in I$, so either $x \in F_{i}$ or both $y \in F_{i}$ and $z \in F_{i}$ for each $i \in I$, and thus $x \vee (y \wedge z) \in F_{i}$ for each $i \in I$. Consequently $x \vee (y \wedge z) \geq \one$.

  (ii) $\Rightarrow$ (iv): each prime $\one$-filter $F$ of $\alg{A}$ determines a homomorphism ${h_{F}\colon \alg{A} \to \Two}$ into the two-element unital lattice $\zero < \one$, namely $h_{F}(a) = \one$ if and only if ${a \in F}$. Combining the homomorphisms $h_{F}$ for all prime $\one$-filters $F$ of $\alg{A}$ yields a homomorphism $h\colon \alg{A} \to \Two^{P}$, where $P$ is the set of all prime $\one$-filters of $\alg{A}$. The positive kernel of $h$ is the intersection of the positive kernels of the homomorphisms $h_{F}$ for $F \in P$, which is $\up \one$ by assumption.

  (iv) $\Rightarrow$ (ii): each $\one$-filter of a distributive pointed lattice $\alg{B}$ (in particular, the principal filter $\up \one$) is an intersection of prime $\one$-filters. The same therefore holds for the positive kernel $h^{-1}[\up \one]$ of the homomorphism $h$.
\end{proof}

  Given an algebra $\alg{A}$ and elements $c, d \in \alg{A}$, the congruence generated by a set of pairs $X \subseteq A^{2}$ will be denoted by $\Cg^{\alg{A}} X$, with $\Cg^{\alg{A}} \pair{c}{d} \assign \Cg^{\alg{A}} \{ \pair{c}{d} \}$. The identity congruence on $\alg{A}$ will be denoted by $\idcon_{\alg{A}}$. Recall that an algebra $\alg{A}$ is \emph{subdirectly irreducible}, or \emph{s.i.}\ for short, if for each family of congruences $(\theta_{i})_{i \in I}$ of $\alg{A}$, if $\bigcap_{i \in I} \theta_{i} = \idcon_{\alg{A}}$, then $\theta_{i} = \idcon_{\alg{A}}$ for some $i \in I$. It is \emph{finitely subdirectly irreducible}, or \emph{f.s.i.}\ for short, if this implication holds for $I$ finite. 

\begin{definition}
  A pointed lattice $\alg{A}$ is said to be \emph{decomposable at $\one$} if for all $n \geq 2$ and all $x_{1}, \dots, x_{n} \in \alg{A}$
\begin{align*} \label{eq: theta_n}
  x_{1} \vee \dots \vee x_{n} = \one \implies \Cg^{\alg{A}} \pair{x_{1}}{1} \cap \dots \cap \Cg^{\alg{A}} \pair{x_{n}}{1} = \idcon_{\alg{A}}. \tag{$\theta_{n}$}
\end{align*}
\end{definition}

  That is, if $x_{1} \vee \dots \vee x_{n} = \one$, then $\alg{A}$ has a subdirect embedding into a product of the algebras $\alg{A} / \theta_{i}$ for $\theta_{i} \assign \Cg^{\alg{A}} \pair{x_{i}}{\one}$ via the natural quotient maps $\alg{A} \to \alg{A} / \theta_{i}$.

  Observe that if $\one$ is join irreducible in $\alg{A}$, then $\alg{A}$ is decomposable at $\one$. The converse holds in case $\alg{A}$ is finitely subdirectly irreducible.

  The following lemma shows that pointed lattices decomposable at $\one$ form a quasivariety. Its proof gives an explicit axiomatization of this quasivariety.

\begin{lemma}
  The condition (\ref{eq: theta_n}) is equivalent to a set of quasi-equations.
\end{lemma}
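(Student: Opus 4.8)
The plan is to turn both sides of $(\theta_{n})$ into first-order conditions over $\alg{A}$ by invoking the standard description of congruence generation in lattices, and then to read off the quasi-equations. First I reduce to comparable pairs: for any lattice congruence $\theta$ one has $\pair{a}{b} \in \theta$ iff $\pair{a \wedge b}{a \vee b} \in \theta$, so the equality $\Cg^{\alg{A}} \pair{x_{1}}{\one} \cap \dots \cap \Cg^{\alg{A}} \pair{x_{n}}{\one} = \idcon_{\alg{A}}$ holds precisely when $a = b$ whenever $a \leq b$ and the pair $\pair{a}{b}$ lies in each $\Cg^{\alg{A}} \pair{x_{i}}{\one}$. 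Thus everything rests on controlling when a pair belongs to a principal congruence.

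The key step is the generation of principal lattice congruences by elementary translations: for $c \leq d$, the congruence $\Cg\pair{c}{d}$ is the least reflexive, symmetric, transitive relation containing $\pair{c}{d}$ and closed under the unary translations $x \mapsto x \wedge s$ and $x \mapsto x \vee t$ (this is just the reduction of compatibility with $\wedge$ and $\vee$ to one-sided translations). Consequently $\pair{a}{b} \in \Cg\pair{c}{d}$ if and only if $\pair{a}{b}$ admits a finite \emph{derivation} built from the basic pair $\pair{c}{d}$ by translations and transitivity. For each fixed derivation \emph{shape} $\sigma$ — a finite template specifying at each stage whether it is a basic pair, a meet- or join-translation, or a transitivity step — the assertion ``$\pair{a}{b}$ has a derivation of shape $\sigma$'' is positive primitive: it existentially quantifies the finitely many intermediate elements and translation parameters occurring in $\sigma$ and asserts a finite conjunction of lattice equations, which I denote $\Phi_{\sigma}(a, b, c, d, \tuple{w})$, with $\tuple{w}$ collecting the auxiliary witnesses. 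Since every derivation has some shape, $\Cg\pair{c}{d} = \bigcup_{\sigma} \set{\pair{a}{b}}{\exists \tuple{w}\, \Phi_{\sigma}(a, b, c, d, \tuple{w})}$.

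Now I build the quasi-equations. For each $n \geq 2$ and each $n$-tuple of shapes $\tuple{\sigma} = (\sigma_{1}, \dots, \sigma_{n})$, let $q_{n, \tuple{\sigma}}$ be the universally quantified implication whose antecedent is the conjunction of $x_{1} \vee \dots \vee x_{n} \equals \one$ with the equations of $\Phi_{\sigma_{i}}(a, b, x_{i}, \one, \tuple{w}_{i})$ for $i = 1, \dots, n$, and whose consequent is $a \equals b$; the variables are $x_{1}, \dots, x_{n}, a, b$ together with all auxiliary witnesses in $\tuple{w}_{1}, \dots, \tuple{w}_{n}$, all universally quantified. Because each $\Phi_{\sigma_{i}}$ is a finite conjunction of equations, $q_{n, \tuple{\sigma}}$ is a genuine quasi-equation; the passage from ``there exist witnesses making $\Phi$ hold, implies $a \equals b$'' to ``for all witnesses, $\Phi$ implies $a \equals b$'' is legitimate since the witnesses do not occur in the consequent. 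I then claim $(\theta_{n})$ is equivalent to the conjunction of all the $q_{n, \tuple{\sigma}}$. If $\alg{A} \vDash (\theta_{n})$ and an assignment satisfies the antecedent of some $q_{n, \tuple{\sigma}}$, the equations $\Phi_{\sigma_{i}}$ witness $\pair{a}{b} \in \Cg^{\alg{A}} \pair{x_{i}}{\one}$ for every $i$ while $x_{1} \vee \dots \vee x_{n} = \one$, so $\pair{a}{b}$ lies in the intersection $= \idcon_{\alg{A}}$ and $a = b$. Conversely, if every $q_{n, \tuple{\sigma}}$ holds, $x_{1} \vee \dots \vee x_{n} = \one$, and $\pair{a}{b}$ lies in the intersection, then for each $i$ the pair has a derivation of some shape $\sigma_{i}$ from $\pair{x_{i}}{\one}$, yielding witnesses that satisfy the antecedent of $q_{n, \tuple{\sigma}}$ and hence force $a = b$; so the intersection is $\idcon_{\alg{A}}$.

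I expect the main obstacle to be the careful formulation of the derivation shapes and their defining conjunctions $\Phi_{\sigma}$ — specifically, ensuring that a single shape really yields a single positive primitive formula (so that each $q_{n, \tuple{\sigma}}$ has a conjunction, not a disjunction, as antecedent) and that every member of $\Cg^{\alg{A}} \pair{c}{d}$ is captured by exactly this inductive scheme of translations and transitivity. Once the $\Phi_{\sigma}$ are pinned down, the equivalence is the routine union-of-approximants argument above.
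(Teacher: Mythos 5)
Your proof is correct and follows essentially the same route as the paper: where you re-derive the description of principal-congruence membership as an infinite disjunction of existentially witnessed finite conjunctions of lattice equations (via translation/transitivity derivation shapes), the paper simply cites Maltsev's Lemma to the same effect. The final step --- indexing quasi-equations by tuples of witnessing schemes, putting $x_{1} \vee \dots \vee x_{n} \equals \one$ and the witnessing equations in the antecedent, and universally quantifying the auxiliary variables since they do not occur in the consequent $u \equals v$ --- is exactly the paper's construction.
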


\begin{proof}
  Congruence generation is described by Maltsev's Lemma~\cite[Theorem~4.17]{Bergman}, which (abstracting away from the details) states that there is a family $\Phi$ of sets of equations where each $E \in \Phi$ is a set of equations in the variables $u, v, x, y, \tuple{z}$ (for some finite tuple of variables $\tuple{z}$, whose length depends on the set $E$) such that
\begin{align*}
  \pair{a}{b} \in \Cg^{\alg{A}} \pair{c}{d} \iff \alg{A} \vDash E(a, b, c, d, \tuple{e}) \text{ for some } E \in \Phi \text{ and some tuple } \tuple{e} \in \alg{A}.
\end{align*}
  The condition (\ref{eq: theta_n}) is therefore equivalent to the conjunction of all quasi-equations of the form
\begin{align*}
  x_{1} \vee \dots \vee x_{n} \equals \one ~ \& ~ E_{1}(u, v, x_{1}, \one, \tuple{z}_{1}) ~ \& ~ \dots ~ \& ~ E_{n}(u, v, x_{n}, \one, \tuple{z}_{n}) \implies u \equals v
\end{align*}
  for $n \geq 2$ and $E_{1}, \dots, E_{n} \in \Phi$.
\end{proof}

  The positive kernel of a congruence $\theta$ of $\alg{A}$ is a prime filter if and only if $\alg{A} / \theta$ is a prime-pointed lattice. We now show that every prime filter $F$ of a pointed lattice arises as the positive kernel of a congruence.

\begin{lemma} \label{lemma: theta}
  Let $\alg{A}$ be a pointed lattice and let $F$ be a prime $\one$-filter of $\alg{A}$. Then there is a smallest congruence $\ThetaPlus(F)$ of $\alg{A}$ such that $F$ is the positive kernel of~$\ThetaPlus(F)$, namely
\begin{align*}
  \ThetaPlus(F) \assign \Cg^{\alg{A}} \set{\pair{f \wedge \one}{\one}}{f \in F}.
\end{align*}
\end{lemma}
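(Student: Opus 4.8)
The plan is to establish two things: first, that $F$ really is the positive kernel of the congruence $\ThetaPlus(F) \assign \Cg^{\alg{A}} \set{\pair{f \wedge \one}{\one}}{f \in F}$, and second, that it is the least congruence with this property. Throughout I would use that for a congruence $\theta$ of $\alg{A}$ the positive kernel is $\set{a \in \alg{A}}{\one / \theta \leq a / \theta} = \set{a \in \alg{A}}{\pair{\one \wedge a}{\one} \in \theta}$, since $\one / \theta \leq a / \theta$ holds exactly when $(\one \wedge a)/\theta = \one/\theta$. In particular the positive kernel is monotone in $\theta$.

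For the inclusion $F \subseteq {}$(positive kernel of $\ThetaPlus(F)$) and for minimality, only the generating pairs are needed. If $f \in F$, then $\pair{\one \wedge f}{\one}$ is a generator of $\ThetaPlus(F)$, so $\pair{\one \wedge f}{\one} \in \ThetaPlus(F)$ and hence $f$ lies in the positive kernel. Conversely, if $\psi$ is any congruence whose positive kernel is $F$, then for each $f \in F$ we have $\one/\psi \leq f/\psi$, whence $(\one \wedge f)/\psi = \one/\psi$, i.e.\ $\pair{\one \wedge f}{\one} \in \psi$. Thus every generator of $\ThetaPlus(F)$ lies in $\psi$, so $\ThetaPlus(F) \subseteq \psi$, which gives minimality.

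The remaining and crucial task is the reverse inclusion, that the positive kernel of $\ThetaPlus(F)$ is contained in $F$. The idea is to exhibit an upper bound on $\ThetaPlus(F)$ whose positive kernel is already exactly $F$, and here the primeness of $F$ is decisive. Because $F$ is a prime $\one$-filter, the characteristic map $h_{F} \colon \alg{A} \to \Two$ into the two-element unital lattice $\zero < \one$, given by $h_{F}(a) = \one$ iff $a \in F$, is a homomorphism of pointed lattices: meets are preserved because $F$ is a filter, joins because $F$ is prime, and $h_{F}(\one) = \one$ because $\one \in F$. (This is the same homomorphism used in the proof of Lemma~\ref{lemma: prime}.) Its positive kernel is $h_{F}^{-1}[\up \one] = F$.

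Finally I would check that $\ThetaPlus(F) \subseteq \ker h_{F}$: for each generator $\pair{f \wedge \one}{\one}$ with $f \in F$ we have $h_{F}(f \wedge \one) = h_{F}(f) \wedge h_{F}(\one) = \one = h_{F}(\one)$, so the generator lies in $\ker h_{F}$, and hence $\ThetaPlus(F) \subseteq \ker h_{F}$. By monotonicity of the positive kernel, the positive kernel of $\ThetaPlus(F)$ is then contained in that of $\ker h_{F}$, which is $F$. Together with the first inclusion this shows the positive kernel of $\ThetaPlus(F)$ equals $F$, completing the argument. The one genuine obstacle is this reverse inclusion; everything else is formal manipulation of congruences, and the whole argument hinges on recognizing that primeness of $F$ is exactly what makes $h_{F}$ a homomorphism and thus an upper bound whose positive kernel cannot exceed $F$.
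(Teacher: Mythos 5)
Your proof is correct and takes essentially the same route as the paper: your congruence $\ker h_{F}$ is exactly the paper's two-block equivalence relation with classes $F$ and $A \setminus F$, with primeness of $F$ entering in both cases to guarantee compatibility with joins, and both arguments conclude by noting that this congruence contains all generators of $\ThetaPlus(F)$ and hence $\ThetaPlus(F)$ itself. The only cosmetic difference is that you package this relation as the kernel of the characteristic homomorphism onto $\Two$ instead of verifying directly that the two-block partition is a congruence.
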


\begin{proof}
  To prove that $F$ is the positive kernel of $\theta \assign \ThetaPlus(F)$, let $P$ be the subset of $\alg{A}$ corresponding to the positive cone of $\alg{A} / \theta$. Clearly $F \subseteq P$ by the definition of~$\theta$. Conversely, consider an element $p \in P$. That is, $\pair{p \wedge \one}{\one} \in \theta$. Because $F$ is a prime filter, the equivalence relation $\psi$ with two equivalence classes $F$ and $L - F$ is a congruence. Because $\pair{f \wedge \one}{\one} \in \psi$ for each $f \in F$, we have $\theta \subseteq \psi$, so $\pair{p \wedge \one}{\one} \in \psi$. But $\one \in F$, thus $p \wedge \one$ and consequently also $p$ must lie in $F$, proving that $F = P$. Finally, if $\phi$ is a congruence whose positive kernel is $F$, then $\pair{f \wedge \one}{\one} \in \phi$ for each $f \in F$, so $\ThetaPlus(F) \subseteq \phi$.
\end{proof}

  The following quasi-equations for $n \geq 2$ hold in all semi-prime-pointed lattices:
\begin{align*}
  x_{1} \vee \dots \vee x_{n} \geq \one \implies (x_{1} \wedge z) \vee \dots \vee (x_{n} \wedge z) \geq \one \wedge z. \tag{$\alpha_{n}$}
\end{align*}
  In particular, for $n \assign 2$ we get the quasi-equation
\begin{align*} \label{eq: alpha}
  x \vee y \geq \one \implies (x \wedge z) \vee (y \wedge z) \geq \one \wedge z. \tag{$\alpha_{2}$}
\end{align*}

\begin{lemma} \label{lemma: alpha}
  Each pointed lattice which satisfies (\ref{eq: alpha}) is up-distributive at $\one$.
\end{lemma}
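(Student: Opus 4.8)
The plan is to unwind both definitions and derive the quasi-equation for up-distributivity at $\one$ directly from (\ref{eq: alpha}) by applying the latter twice, with two different substitutions for the auxiliary variable $z$. Concretely, I fix $a, b, c \in \alg{A}$ satisfying $a \vee b \geq \one$ and $a \vee c \geq \one$, and I aim to conclude $a \vee (b \wedge c) \geq \one$. The guiding idea is that each of the two hypotheses should be consumed by a separate instantiation of (\ref{eq: alpha}).

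For the first application I would instantiate (\ref{eq: alpha}) at $x \assign a$, $y \assign b$, $z \assign c$, using the hypothesis $a \vee b \geq \one$ to obtain $(a \wedge c) \vee (b \wedge c) \geq \one \wedge c$. Joining both sides with $a$ and absorbing $a \wedge c$ into $a$ then yields $a \vee (b \wedge c) \geq a \vee (\one \wedge c)$. This reduces the whole problem to the single residual inequality $a \vee (\one \wedge c) \geq \one$; note that the second hypothesis $a \vee c \geq \one$ has not yet been used.

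The second application handles exactly this residual inequality. Here I would instantiate (\ref{eq: alpha}) at $x \assign a$, $y \assign c$, $z \assign \one$, using the hypothesis $a \vee c \geq \one$ to obtain $(a \wedge \one) \vee (\one \wedge c) \geq \one \wedge \one = \one$. Joining both sides with $a$ and absorbing $a \wedge \one$ into $a$ gives $a \vee (\one \wedge c) \geq \one$, which is what remained to be shown. Chaining the two inequalities produces $a \vee (b \wedge c) \geq a \vee (\one \wedge c) \geq \one$, as desired.

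The only real obstacle is conceptual rather than computational: because the consequent of (\ref{eq: alpha}) delivers $\geq \one \wedge z$ rather than $\geq \one$, a single naive substitution $z \assign c$ cannot by itself produce a conclusion of the form $\geq \one$, since it leaves behind the truncated term $\one \wedge c$. The key observation is that this truncation is harmless: the very same schema (\ref{eq: alpha}), now applied with $z \assign \one$ to the second hypothesis, lifts $\one \wedge c$ back above $\one$ once it is joined with $a$. After one recognizes that the two hypotheses must be discharged by these two distinct instantiations, everything else is routine lattice absorption.
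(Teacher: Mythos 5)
Your proof is correct and takes essentially the same approach as the paper: a direct derivation by instantiating (\ref{eq: alpha}) with suitably chosen values of $z$ and then combining the resulting inequalities via lattice absorption. The only difference is bookkeeping — the paper uses three instantiations (twice with $z \assign \one$ to normalize both hypotheses to equations at $\one$, then once with $z \assign \one \wedge b$), whereas your two instantiations ($z \assign c$ and $z \assign \one$) accomplish the same thing slightly more economically.
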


\begin{proof}
  Suppose that $a \vee b \geq \one$ and $a \vee c \geq \one$. Then $(a \wedge \one) \vee (b \wedge \one) = \one$ and $(a \wedge \one) \vee (c \wedge \one) = 1$ by (\ref{eq: alpha}), so $\one \wedge b = (a \wedge \one \wedge b) \vee (c \wedge \one \wedge b)$ by a further application of (\ref{eq: alpha}). Consequently, $\one = (a \wedge \one) \vee (b \wedge \one) = (a \wedge \one) \vee (a \wedge \one \wedge b) \vee (b \wedge c \wedge \one) = (a \wedge \one) \vee (b \wedge c \wedge \one)$, and thus $a \vee (b \wedge c) \geq \one$.
\end{proof}

  The converse implication does not hold: the unital lattice expansion of the pentagon lattice $\Nfive$ is up-distributive at $\one$ but fails to satisfy (\ref{eq: alpha}).

  The following theorem tells us that if $\alg{A}$ fails to be prime-pointed, we can prove this by falsifying either up-distributivity at $\one$ or decomposability at~$\one$, whereas if a subdirect decomposition of $\alg{A}$ into prime-pointed lattices exists, we can always take factors of the form $\alg{A} / \ThetaPlus(F)$ with $F$ ranging over prime $\one$-filters.

\begin{theorem} \label{thm: semi-prime-pointed}
  The following are equivalent for each pointed lattice $\alg{A}$:
\begin{enumerate}[(i)]
\item $\alg{A}$ is semi-prime-pointed.
\item $\alg{A}$ is up-distributive at $\one$ and decomposable at $\one$.
\item $\alg{A}$ satisfies (\ref{eq: alpha}) and is decomposable at $\one$.
\item $\bigcap_{i \in I} \ThetaPlus(F_{i}) = \idcon_{\alg{A}}$ for some family $(F_{i})_{i \in I}$ of prime $\one$-filters of $\alg{A}$.
\end{enumerate}
  In case $\alg{A}$ is finite, the above conditions are equivalent to:
\begin{enumerate}[(i)]
\setcounter{enumi}{4}
\item $\alg{A}$ is up-distributive at $\one$ and for each family $(F_{i})_{i \in I}$ of prime $\one$-filters of $\alg{A}$ if $\bigcap_{i \in I} F_{i} = \up \one$, then $\bigcap_{i \in I} \ThetaPlus(F_{i}) = \idcon_{\alg{A}}$.
\end{enumerate}
\end{theorem}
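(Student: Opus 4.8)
The plan is to prove the cycle (i) $\Rightarrow$ (iii) $\Rightarrow$ (ii) $\Rightarrow$ (iv) $\Rightarrow$ (i), which establishes the equivalence of (i)--(iv), and then in the finite case to insert (v) between (ii) and (iv). Three of these four implications are light. For (i) $\Rightarrow$ (iii): a prime-pointed lattice validates $(\alpha_{2})$ by a one-line case split on whether $x \geq \one$ or $y \geq \one$, and it is decomposable at $\one$ since a join prime element is join irreducible; as both $(\alpha_{2})$ and decomposability at $\one$ are expressible by quasi-equations, they are inherited by every member of $\mathbb{ISP}$ of the prime-pointed lattices, that is, by every semi-prime-pointed lattice (indeed the validity of the $(\alpha_{n})$ in semi-prime-pointed lattices was already recorded above). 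The step (iii) $\Rightarrow$ (ii) is exactly Lemma~\ref{lemma: alpha}. For (iv) $\Rightarrow$ (i): the positive kernel of $\ThetaPlus(F_{i})$ is the prime filter $F_{i}$, so each quotient $\alg{A} / \ThetaPlus(F_{i})$ is prime-pointed, and the hypothesis $\bigcap_{i} \ThetaPlus(F_{i}) = \idcon_{\alg{A}}$ says precisely that the quotient maps yield a subdirect embedding of $\alg{A}$ into a product of prime-pointed lattices.

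The core of the theorem, and the step I expect to be the main obstacle, is (ii) $\Rightarrow$ (iv). I would take $(F_{i})_{i \in I}$ to be the family of all prime $\one$-filters and show $\bigcap_{F} \ThetaPlus(F) = \idcon_{\alg{A}}$ by contradiction. Suppose $\pair{a}{b} \in \ThetaPlus(F)$ for every prime $\one$-filter $F$, with $a \neq b$. Unfolding $\ThetaPlus(F)$ as the directed union $\bigcup_{f \in F} \Cg^{\alg{A}} \pair{f \wedge \one}{\one}$ --- directed because $F$ is closed under meets and collapsing a smaller element of $\down \one$ to $\one$ produces a larger congruence --- membership of $\pair{a}{b}$ yields, for each $F$, some $f \in F$ with $f \wedge \one$ in the set $S \assign \set{x \leq \one}{\pair{a}{b} \in \Cg^{\alg{A}} \pair{x}{\one}}$. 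Now $S$ is a down-set of $\down \one$ not containing $\one$ (as $a \neq b$), and decomposability at $\one$ guarantees that no finite join of elements of $S$ equals $\one$; hence the ideal $I$ generated by $S$ is $\one$-proper. By up-distributivity at $\one$ in the form of Lemma~\ref{lemma: prime}(iii), $I$ extends to a prime $\one$-proper ideal $J$, whose complement $F \assign \alg{A} \setminus J$ is a prime $\one$-filter. Applying the assumption to this $F$ gives some $f \in F$ with $f \wedge \one \in S \subseteq I \subseteq J$; since $J$ is prime and $\one \notin J$, this forces $f \in J$, contradicting $f \in F$. The delicate point is exactly the cooperation of the two hypotheses: decomposability keeps the ideal generated by $S$ away from $\one$, while up-distributivity turns that $\one$-proper ideal into a separating prime filter.

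It remains to place (v) in the finite case. The implication (v) $\Rightarrow$ (iv) needs no finiteness: by Lemma~\ref{lemma: prime} the intersection of all prime $\one$-filters is $\up \one$, so the clause defining (v), applied to this family, is precisely (iv). For (ii) $\Rightarrow$ (v) I would rerun the argument above, now exploiting finiteness to make the ideal $I$ principal, say $I = \down c$ with $c \assign \bigvee S < \one$. Given a family $(F_{i})$ with $\bigcap_{i} F_{i} = \up \one$ and a putative $\pair{a}{b} \in \bigcap_{i} \ThetaPlus(F_{i})$ with $a \neq b$, the element $c$ lies below $\one$ but outside $\up \one$, hence outside some $F_{i_{0}}$; yet the witness $f_{i_{0}} \in F_{i_{0}}$ satisfies $f_{i_{0}} \wedge \one \in S$, so $f_{i_{0}} \wedge \one \leq c$ and $f_{i_{0}} \wedge \one \in F_{i_{0}}$, forcing $c \in F_{i_{0}}$ --- a contradiction. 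The passage from ``no finite join reaches $\one$'' to ``the single join $\bigvee S$ stays below $\one$'' is what breaks without finiteness, which is why (v) is confined to the finite case. Finally, the degenerate situation where $\one$ is the least element (so there are no proper prime $\one$-filters) makes $\alg{A}$ prime-pointed outright and is dealt with directly.
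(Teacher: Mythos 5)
Your proposal is correct and is essentially the paper's own argument: the core step (ii) $\Rightarrow$ (iv) --- the downset $S$ of elements $x \leq \one$ with $\pair{a}{b} \in \Cg^{\alg{A}}\pair{x}{\one}$, decomposability at $\one$ keeping the ideal it generates $\one$-proper, up-distributivity (via Lemma~\ref{lemma: prime}) extending that ideal to a prime $\one$-proper ideal whose complement is the separating prime $\one$-filter, and the directed-union unfolding of $\ThetaPlus(F)$ to extract the witness $f \wedge \one \in S$ --- is exactly the paper's proof of (ii) $\Rightarrow$ (i), merely organized contrapositively into a cycle, and your remaining implications invoke the same lemmas the paper does. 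The only divergence is in the finite case, where the paper simply observes that every $\one$-filter of a finite lattice is principal, so that the clause in (v) is literally an instance of decomposability at $\one$, whereas you rerun the separation argument with $c \assign \bigvee S$; both versions work.
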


\begin{proof}
  (i) $\Rightarrow$ (ii): clearly prime-pointed lattices are up-distributive at $\one$. They are also decomposable at $\one$: if $x_{1} \vee \dots \vee x_{n} = \one$, then $x_{i} = \one$ for some $i \in \{ 1, \dots, n \}$, so $\Cg^{\alg{A}} \pair{x_{i}}{\one} = \idcon_{\alg{A}}$. Because up-distributivity at $\one$ and decomposability at $\one$ are quasi-equational conditions, they are preserved under $\mathbb{ISPP}_{\mathrm{U}}$.

  (ii) $\Rightarrow$ (i): let $\alg{A}$ be a pointed lattice up-distributive at $\one$ and decomposable at~$\one$. If $\alg{A}$ is dually integral, then it is prime-pointed. We may therefore assume that $\alg{A}$ is not dually integral. Consider distinct elements $a, b \in \alg{A}$. To prove the inclusion, it suffices to find a congruence $\theta$ on $\alg{A}$ such that $\pair{a}{b} \notin \theta$ and the pointed lattice $\alg{A} / \theta$ is prime-pointed.

  To this end, let
\begin{align*}
  I \assign \set{x \in \alg{A}}{x \leq \one \text{ and } \pair{a}{b} \in \Cg^{\alg{A}} \pair{x}{\one}}.
\end{align*}
  Since $\Cg^{\alg{A}} \pair{y}{\one} \leq \Cg^{\alg{A}} \pair{x}{\one}$ if $x \leq y \leq \one$, the set $I$ is a downset. Let $J$ be the ideal generated by $I$ in case $I$ is non-empty, and let $J \assign \down x$ for some arbitrary $x < \one$ in case $I = \emptyset$ (such an element $x$ exists because $\alg{A}$ is not dually integral). Clearly $J$ is a $\one$-proper ideal in the latter case. Decomposability at $\one$ implies that $J$ is a $\one$-proper ideal also in the former case: otherwise $x_{1} \vee \dots \vee x_{n} = \one$ for some $x_{1}, \dots, x_{n} \in I$, so $\Cg^{\alg{A}} \pair{x_{1}}{\one} \cap \dots \Cg^{\alg{A}} \pair{x_{n}}{\one} = \idcon_{\alg{A}}$, but by the definition of $I$ this would imply that $\pair{a}{b} \in \idcon_{\alg{A}}$, i.e.\ that $a = b$. Up-distributivity at $\one$ implies that $J$ extends to a prime $\one$-proper ideal $K$.

  Let $F \assign L - K$. Then $F$ is a prime $\one$-filter, so $\alg{A} / \ThetaPlus(F)$ is a prime-pointed lattice by Lemma~\ref{lemma: theta}. Moreover, $\pair{a}{b} \notin \ThetaPlus(F)$: if $\pair{a}{b} \in \ThetaPlus(F)$, then there is some $f \in F$ such that $\pair{a}{b} \in \Cg^{\alg{A}} \pair{f \wedge \one}{\one}$, since $\ThetaPlus(F)$ is a directed join of such congruences. But then $f \wedge \one \in I \subseteq K$ and $f \wedge \one \notin F$, contradicting $f \in F$ (or indeed contradicting $I = \emptyset$ if $I$ is empty).

  (i) $\Rightarrow$ (iii): each prime-pointed and therefore each semi-prime-pointed lattice satisfies (\ref{eq: alpha}). We have already proved that each semi-prime-pointed lattice is decomposable at $\one$.

  (iii) $\Rightarrow$ (ii): this is Lemma~\ref{lemma: alpha}.

  (i) $\Rightarrow$ (iv): suppose that $\alg{A} \leq \prod_{i \in I} \alg{B}_{i}$ for some family $(\alg{B}_{i})_{i \in I}$ of prime-pointed lattices. Let $F_{i} \assign \pi_{i}^{-1}[\up \one^{\alg{B}_{i}}]$ for $i \in I$, where $\pi_{i}\colon \alg{A} \to \alg{B}_{i}$ is the projection map. Then each $F_{i}$ is a prime $\one$-filter. Moreover, $\ThetaPlus(F_{i}) \leq \ker \pi_{i}$, so $\bigcap_{i \in I} \ThetaPlus(F_{i}) \subseteq \bigcap_{i \in I} \ker \pi_{i} = \idcon_{\alg{A}}$.

  (iv) $\Rightarrow$ (i): the pointed lattice $\alg{A}$ is isomorphic to a subdirect product of the pointed lattices $\alg{A} / \ThetaPlus(F_{i})$ for $i \in I$, which are all prime-pointed by Lemma~\ref{lemma: theta}.

  (ii) $\Rightarrow$ (v): if $\alg{A}$ is finite, then each $\one$-filter $F$ of $\alg{A}$ has the form $F = \up f$ for some $f \leq \one$. But then $\ThetaPlus(F) = \Cg^{\alg{A}} \pair{f}{\one}$, so the implication in (iv) is a special case of decomposability at $\one$.

  (Alternatively, instead of (ii) $\Rightarrow$ (v) we may prove the implication (i) $\Rightarrow$ (v) using an argument analogous to the proof of the implication (ii) $\Rightarrow$ (vi) in Theorem~\ref{thm: semiconic semi-prime-pointed}.)

  (v) $\Rightarrow$ (i): up-distributivity at $\one$ implies that $\up \one = \bigcap_{i \in I} F_{i}$ for some family $(F_{i})_{i \in I}$ of $\one$-filters of $\alg{A}$. Consequently, $\alg{A}$ is a subdirect product of the algebras $\alg{A} / \ThetaPlus(F_{i})$ for $i \in I$, which are prime-pointed by Lemma~\ref{lemma: theta}.
\end{proof}

  As we have already observed, a pointed lattice is semi-irreducible-pointed if and only if its negative cone is semi-prime-pointed, therefore the above theorem immediately yields a description of semi-irreducible-pointed lattices.

  Subdirect products of conic prime-pointed lattices can be described similarly, replacing $\ThetaPlus(F)$ with a larger congruence $\ThetaC(F)$.

\begin{lemma} \label{lemma: thetac}
  Let $\alg{A}$ be a pointed lattice and let $F$ be a prime $\one$-filter of $\alg{A}$. Then there is a smallest congruence $\ThetaC(F)$ of $\alg{A}$ such that $\alg{A} / \ThetaC(F)$ is conic and $F$ is the positive kernel of $\ThetaC(F)$, namely
\begin{align*}
  \ThetaC(F) & \assign \Cg^{\alg{A}} \set{\pair{f \wedge \one}{\one}}{f \in F} \vee \Cg^{\alg{A}} \set{\pair{i \wedge \one}{i}}{i \notin F}.
\end{align*}
\end{lemma}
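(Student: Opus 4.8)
The plan is to follow the template of the proof of Lemma~\ref{lemma: theta}, now also tracking the extra generators $\pair{i \wedge \one}{i}$ for $i \notin F$, which are exactly what forces the quotient to be conic. Throughout I write $\theta \assign \ThetaC(F)$, and I recall that the positive kernel of a congruence $\phi$ is $\set{a \in \alg{A}}{a \wedge \one \mathrel{\phi} \one}$, that $[a]_{\phi} \geq \one$ is equivalent to $a \wedge \one \mathrel{\phi} \one$, and that $[a]_{\phi} \leq \one$ is equivalent to $a \wedge \one \mathrel{\phi} a$. The proof then splits into checking that $\theta$ has the two stated properties and verifying its minimality among congruences with those properties.

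First I would verify conicity and one inclusion of the positive kernel directly from the generators. If $a \in F$, then taking $f \assign a$ the generator $\pair{a \wedge \one}{\one}$ gives $a \wedge \one \mathrel{\theta} \one$, i.e.\ $[a]_{\theta} \geq \one$; this simultaneously shows $F$ is contained in the positive kernel of $\theta$. If $a \notin F$, then the generator $\pair{a \wedge \one}{a}$ gives $a \wedge \one \mathrel{\theta} a$, i.e.\ $[a]_{\theta} \leq \one$. Hence every class of $\alg{A} / \theta$ is comparable with $\one$, so $\alg{A} / \theta$ is conic.

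The main obstacle, exactly as in Lemma~\ref{lemma: theta}, is the reverse inclusion: that $\theta$ does not collapse too much, so that its positive kernel is not strictly larger than $F$. To bound $\theta$ from above I would reuse the two-class equivalence $\psi$ whose classes are $F$ and its complement, which is a congruence precisely because $F$ is a prime filter. Every generator of $\theta$ lies in $\psi$: for $f \in F$ both $f \wedge \one$ and $\one$ lie in $F$, as $F$ is a filter containing $\one$; and for $i \notin F$ both $i \wedge \one$ and $i$ lie outside $F$, since $F$ is an upset and $i \wedge \one \leq i \notin F$ forces $i \wedge \one \notin F$. Thus $\theta \subseteq \psi$. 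Now if $a$ lies in the positive kernel of $\theta$, then $a \wedge \one \mathrel{\psi} \one$, whence $a \wedge \one \in F$ and therefore $a \in F$; combined with the previous paragraph this yields that the positive kernel of $\theta$ equals $F$.

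Finally, for minimality I would take an arbitrary congruence $\phi$ whose quotient is conic and whose positive kernel is $F$, and show that each generator of $\theta$ lies in $\phi$. For $f \in F$ we have $[f]_{\phi} \geq \one$, i.e.\ $f \wedge \one \mathrel{\phi} \one$, so $\pair{f \wedge \one}{\one} \in \phi$. For $i \notin F$ we have $[i]_{\phi} \not\geq \one$ since the positive kernel is $F$; conicity of $\alg{A} / \phi$ then forces $[i]_{\phi} \leq \one$, i.e.\ $i \wedge \one \mathrel{\phi} i$, so $\pair{i \wedge \one}{i} \in \phi$. Hence $\theta \subseteq \phi$, which is the desired minimality. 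I expect no genuine difficulty beyond this positive-kernel bookkeeping; the only essential use of the primeness of $F$ is in the verification that $\psi$ is a congruence.
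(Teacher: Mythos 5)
Your proposal is correct and follows essentially the same route as the paper: conicity and the inclusion $F \subseteq$ positive kernel read off from the generators, the reverse inclusion via the two-class congruence $\psi$ with classes $F$ and its complement (whose congruence property is the only place primeness is used), and minimality by checking both families of generators lie in any competitor $\phi$. The only difference is that you explicitly verify the extra generators $\pair{i \wedge \one}{i}$ lie in $\psi$, a detail the paper compresses into ``the same as in Lemma~\ref{lemma: theta}.''
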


\begin{proof}
  Clearly $\alg{A}$ is conic by the definition of $\theta \assign \ThetaC(F)$. The proof of that $\alg{A} / \theta$ is a prime-pointed lattice and $F$ is the positive kernel of $\theta$ is the same as in Lemma~\ref{lemma: theta}. If $\phi$ is a congruence whose positive kernel is $F$ such that $\alg{A} / \phi$ is conic, then $\pair{f \wedge \one}{\one} \in \phi$ for each $f \in F$ and $\pair{i \wedge \one}{i} \in \phi$ for each $i \notin F$, so $\ThetaC(F) \subseteq \phi$.
\end{proof}

\begin{theorem} \label{thm: semiconic semi-prime-pointed}
  The following are equivalent for each pointed lattice $\alg{A}$:
\begin{enumerate}[(i)]
\item $\alg{A}$ is semiconic and semi-prime-pointed.
\item $\alg{A}$ is a subdirect product of conic prime-pointed lattices.
\item $\alg{A}$ is semiconic, up-distributive at $\one$, and decomposable at $\one$.
\item $\alg{A}$ is semiconic, decomposable at $\one$, and satisfies (\ref{eq: alpha}).
\item $\bigcap_{i \in I} \ThetaC(F_{i}) = \idcon_{\alg{A}}$ for some family $(F_{i})_{i \in I}$ of prime $\one$-filters of $\alg{A}$.
\end{enumerate}
  In case $\alg{A}$ is finite, the above conditions are equivalent to:
\begin{enumerate}[(i)]
\setcounter{enumi}{5}
\item $\alg{A}$ is up-distributive at $\one$ and for each family $(F_{i})_{i \in I}$ of prime $\one$-filters of $\alg{A}$ if $\bigcap_{i \in I} F_{i} = \up \one$, then $\bigcap_{i \in I} \ThetaC(F_{i}) = \idcon_{\alg{A}}$.
\end{enumerate}
\end{theorem}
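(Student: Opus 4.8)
The plan is to obtain the equivalences by leaning on Theorem~\ref{thm: semi-prime-pointed} for the purely quasi-equational conditions and on Lemma~\ref{lemma: thetac} for the passage between subdirect decompositions and the congruences $\ThetaC(F)$. First I would dispose of (i), (iii), (iv). By Theorem~\ref{thm: semi-prime-pointed} the conjunction of up-distributivity at $\one$ and decomposability at $\one$, the conjunction of (\ref{eq: alpha}) and decomposability at $\one$, and semi-prime-pointedness are all equivalent; since (iii) and (iv) merely adjoin semiconicity to these, the equivalence (i) $\Leftrightarrow$ (iii) $\Leftrightarrow$ (iv) is immediate. The easy half of the remaining equivalences is also quick: (v) $\Rightarrow$ (ii) holds because each $\alg{A}/\ThetaC(F_{i})$ is conic prime-pointed by Lemma~\ref{lemma: thetac}, so $\bigcap_{i} \ThetaC(F_{i}) = \idcon_{\alg{A}}$ exhibits $\alg{A}$ as a subdirect product of such algebras; and (ii) $\Rightarrow$ (i) holds because a subdirect product of conic prime-pointed lattices is semiconic (its factors are conic) and semi-prime-pointed (its factors are prime-pointed).

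The main positive content is (i) $\Rightarrow$ (v), which I would prove by reducing to the two cones. Since $\alg{A}$ is semiconic, Fact~\ref{fact: semiconic} gives a subdirect embedding $\alg{A} \into \alg{A}_{-} \times \alg{A}_{+}$ with projection kernels $\theta_{\down}$ and $\theta_{\up}$ satisfying $\theta_{\down} \cap \theta_{\up} = \idcon_{\alg{A}}$. The positive cone $\alg{A}_{+}$ is dually integral, hence conic and prime-pointed; its projection has positive kernel the total filter $F_{0} \assign \alg{A}$, and by the minimality in Lemma~\ref{lemma: thetac} we get $\ThetaC(F_{0}) \subseteq \theta_{\up}$ (if $\alg{A}$ is dually integral this factor already finishes the argument). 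The negative cone $\alg{A}_{-}$ is integral and, being a subalgebra of the semi-prime-pointed $\alg{A}$, is again semi-prime-pointed; so by Theorem~\ref{thm: semi-prime-pointed} there is a family $(G_{j})$ of prime $\one$-filters of $\alg{A}_{-}$ with $\bigcap_{j} \ThetaPlus^{\alg{A}_{-}}(G_{j}) = \idcon_{\alg{A}_{-}}$ and integral, hence conic, prime-pointed quotients. Pulling these quotient maps back along $\pi_{-}\colon \alg{A} \to \alg{A}_{-}$ yields congruences $\psi_{j}$ of $\alg{A}$ with conic prime-pointed quotients and $\bigcap_{j} \psi_{j} = \theta_{\down}$; letting $F_{j}$ be the positive kernel of $\psi_{j}$ (a prime $\one$-filter, since the quotient is prime-pointed), Lemma~\ref{lemma: thetac} gives $\ThetaC(F_{j}) \subseteq \psi_{j}$. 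Hence $\ThetaC(F_{0}) \cap \bigcap_{j} \ThetaC(F_{j}) \subseteq \theta_{\up} \cap \theta_{\down} = \idcon_{\alg{A}}$, which is (v).

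For the finite case I would follow the finite part of Theorem~\ref{thm: semi-prime-pointed}. The implication (vi) $\Rightarrow$ (v) is straightforward: up-distributivity yields, via Lemma~\ref{lemma: prime}, a family of prime $\one$-filters with $\bigcap_{i} F_{i} = \up \one$, and then the hypothesis of (vi) gives $\bigcap_{i} \ThetaC(F_{i}) = \idcon_{\alg{A}}$. For the converse I would argue from (ii) (equivalently, from the cone decomposition above), using that in a finite lattice every $\one$-filter is principal, $F_{i} = \up f_{i}$ with $f_{i} \leq \one$, so that $\bigcap_{i} F_{i} = \up \one$ amounts to $\bigvee_{i} f_{i} = \one$, after which decomposability at $\one$ governs the negative cone. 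The main obstacle here, and the essential difference from Theorem~\ref{thm: semi-prime-pointed}, is that $\ThetaC(\up f_{i})$ is strictly larger than $\ThetaPlus(\up f_{i}) = \Cg^{\alg{A}}\pair{f_{i}}{\one}$: the conification generators $\pair{i \wedge \one}{i}$ can, after closing under joins, identify elements of the positive cone, so the finite-case claim is not merely a reformulation of decomposability as in Theorem~\ref{thm: semi-prime-pointed}. One must therefore analyse the restriction of each $\ThetaC(\up f_{i})$ to the two cones separately and verify that the prescribed family still separates the positive cone; controlling $\ThetaC$ on the positive cone is the delicate point I expect to require the most care.
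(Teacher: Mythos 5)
Your treatment of (i)--(v) is correct, and it rests on the same mechanism as the paper's proof: the minimality clause of Lemma~\ref{lemma: thetac} applied to the kernels of a subdirect decomposition into conic prime-pointed lattices. The difference is in (i) $\Rightarrow$ (v), where you build such a decomposition by hand (the total filter handling the positive cone, pullbacks along $\alg{A} \onto \alg{A}_{-}$ of a $\ThetaPlus$-decomposition of $\alg{A}_{-}$ handling the negative cone). The paper is shorter here: (i) $\Leftrightarrow$ (ii) is quoted as an instance of Fact~\ref{fact: semi-kpp}, and (ii) $\Rightarrow$ (v) is immediate by taking $F_{i}$ to be the positive kernel of the kernel $\theta_{i}$ of the given decomposition, so that $\ThetaC(F_{i}) \subseteq \theta_{i}$ by Lemma~\ref{lemma: thetac}. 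Your detour is sound; it merely re-proves the needed half of Fact~\ref{fact: semi-kpp} inline.

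The finite case is where your proposal has a genuine gap: your (vi) $\Rightarrow$ (v) via Lemma~\ref{lemma: prime} is fine, but (i)/(ii) $\Rightarrow$ (vi) is never proved; you only isolate the obstacle --- that $\ThetaC(\up f_{i})$, unlike $\ThetaPlus(\up f_{i}) = \Cg^{\alg{A}}\pair{f_{i}}{\one}$, can identify elements of the positive cone --- and defer it. That obstacle is in fact fatal: the implication is false as stated, so no amount of care would close this gap. Let $\Two$ be the integral two-element pointed lattice, $\Two'$ the dually integral one, and $\alg{A} \assign \Two \times \Two'$, the four-element Boolean lattice in which $\one$ is an atom. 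Then $\alg{A}$ is a direct (hence subdirect) product of conic prime-pointed lattices, so (i)--(v) hold, and $\alg{A}$ is up-distributive at $\one$ since $\one$ is join prime in $\alg{A}$. The singleton family $\{F\}$ with $F \assign \up \one$ is a family of prime $\one$-filters whose intersection is $\up \one$; yet $\alg{A} / \ThetaC(F)$ is conic by Lemma~\ref{lemma: thetac} while $\alg{A}$ is not conic (its other atom is incomparable with $\one$), so $\ThetaC(F) \neq \idcon_{\alg{A}}$ --- explicitly, $\ThetaC(F)$ is the kernel of the projection onto $\Two$. Thus (vi) fails although (i)--(v) hold.

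Consequently the paper's own proof of (ii) $\Rightarrow$ (vi) is also flawed, at exactly the point you flagged: after making the $F_{i}$ pairwise incomparable it correctly shows that each $F_{i}$ equals some positive kernel $G_{j}$ of the given decomposition, but the claim that ``similarly'' each $G_{j}$ equals some $F_{i}$ does not follow (the $G_{j}$ need not be incomparable; in the example above the total filter is a $G_{j}$ but not an $F_{i}$), and it is precisely this inclusion that the concluding computation $\bigcap_{i} \ThetaC(F_{i}) = \bigcap_{j} \ThetaC(G_{j}) \subseteq \bigcap_{j} \ker h_{j}$ requires. A correct finite-case condition is obtained by requiring the family in (vi) to contain the total $\one$-filter: then $\ThetaC$ of the total filter lies below the kernel of $\alg{A} \onto \alg{A}_{+}$, each remaining $\ThetaC(F_{i})$ lies below the pullback of $\ThetaPlus(F_{i} \cap \alg{A}_{-})$ along $\alg{A} \onto \alg{A}_{-}$, and the finite case of Theorem~\ref{thm: semi-prime-pointed} applied to $\alg{A}_{-}$ finishes the argument --- this is exactly the two-cone argument you used for (i) $\Rightarrow$ (v), so your strategy does succeed for the corrected statement.
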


\begin{proof}
  (iii) $\Leftrightarrow$ (iv): this follows from the equivalence (ii) $\Leftrightarrow$ (iii) in Theorem~\ref{thm: semi-prime-pointed}.

  (i) $\Leftrightarrow$ (ii): this is an instance of Fact~\ref{fact: semi-kpp}.

  (i) $\Leftrightarrow$ (iii): this is an immediate consequence on Theorem~\ref{thm: semi-prime-pointed}.

  (ii) $\Rightarrow$ (v): let $(\theta_{i})_{i \in I}$ be a family of congruences of $\alg{A}$ such that each $\alg{A} / \theta_{i}$ is a conic prime-pointed lattice, and let $\alg{A}$ be a subdirect product of the pointed lattices $\alg{A} / \theta_{i}$. If $a \neq b$ for $a, b \in \alg{A}$, then there is some $i \in I$ such that $\pair{a}{b} \notin \theta_{i}$. Let $F_{i}$ be the positive kernel of $\theta_{i}$. Since $\alg{A} / \theta_{i}$ is conic, $\ThetaC(F_{i}) \subseteq \theta_{i}$ for each $i \in I$ by Lemma~\ref{lemma: thetac}. Thus $\bigcap_{i \in I} \ThetaC(F_{i}) \subseteq \bigcap_{i \in I} \theta_{i} = \idcon_{\alg{A}}$.

  (v) $\Rightarrow$ (ii): this holds because for each prime $\one$-filter $F$ of $\alg{A}$ the algebra $\alg{A} / \ThetaC(F)$ is conic and prime-pointed by Lemma~\ref{lemma: thetac}.

  (vi) $\Rightarrow$ (ii): proved as in Theorem~\ref{thm: semi-prime-pointed}.

  (ii) $\Rightarrow$ (vi): let $\alg{A}$ be a subdirect product of a finite family of finite conic prime-pointed lattices $(\alg{B}_{j})_{j \in J}$. That is, there are surjective homomorphisms $h_{j}\colon \alg{A} \to \alg{B}_{j}$ such that $\bigcap_{j \in J} \ker h_{j} = \idcon_{\alg{A}}$. Let $G_{i} \assign h_{j}^{-1}[\up \one^{\alg{B}_{j}}]$. By restricting to minimal $\one$-filters in the family $(F_{i})_{i \in I}$, we may assume without loss of generality that the $\one$-filters $F_{i}$ are pairwise incomparable by inclusion. Because $\bigcap_{j \in J} G_{j} = \up \one = \bigcap_{i \in I} F_{i}$ and the finite families $(F_{i})_{i \in I}$ and $(G_{j})_{j \in J}$ consist of prime $\one$-filters, there is for each $F_{i}$ some $j \in J$ with $G_{j} \subseteq F_{i}$ and some $k \in I$ with $F_{k} \subseteq G_{j}$. But then $F_{k} \subseteq F_{i}$, so $k = i$ and $F_{i} = G_{j}$. Similarly, there is for each $G_{j}$ some $i \in I$ such that $G_{j} = F_{i}$. That is, $\set{F_{i}}{i \in I} = \set{G_{j}}{j \in J}$. Consequently, $\bigcap_{i \in I} \ThetaC(F_{i}) = \bigcap_{j \in J} \ThetaC(G_{j}) \subseteq \bigcap_{j \in J} \ker h_{j} = \idcon_{\alg{A}}$ by Lemma~\ref{lemma: thetac}.
\end{proof}

  The relatively (finitely) subdirectly irreducible algebras in the quasivariety of semiconic semi-prime-pointed lattices, i.e.\ by Fact~\ref{fact: semi-kpp} the conic relatively (finitely) subdirectly irreducibles in the quasivariety of semi-prime-pointed lattices, admit a straightforward description in terms of (finitely) subdirectly irreducible lattices.

  Recall that given an algebra $\alg{A}$ in a quasivariety $\class{Q}$, a congruence $\theta$ of $\alg{A}$ is a \emph{$\class{Q}$-congruence} if $\alg{A} / \theta \in \class{Q}$. The algebra $\alg{A}$ is \emph{relatively simple} (relative to $\class{Q}$) if $\alg{A}$ has exactly two $\class{Q}$-congruences. It is \emph{relatively subdirectly irreducible} (relative to $\class{Q}$), or \emph{r.s.i.}\ for short, if for each family $(\theta_{i})_{i \in I}$ of $\class{Q}$-congruences $\bigcap_{i \in I} \theta_{i} = \idcon_{\alg{A}}$ implies that $\theta_{i} = \idcon_{\alg{A}}$ for some $i \in I$, or equivalently if for each embedding $\iota\colon \alg{A} \into \prod_{i \in I} \alg{B}_{i}$ with $\alg{B}_{i} \in \class{Q}$ for each $i \in I$, the map $\pi_{i} \circ \iota\colon \alg{A} \to \alg{B}_{i}$ is an embedding for some $i \in I$, where $\pi_{i}\colon \prod_{i \in I} \alg{B}_{i} \to \alg{B}_{i}$ is the projection map. \emph{Relatively finitely subdirectly irreducible algebras}, or \emph{r.f.s.i.}\ algebras for short, satisfy these conditions for $I$ finite.

  The following lemma about r.f.s.i.\ algebras is a quasivariety analogue of J\'{o}nsson's Lemma, proved in~\cite[Lemma~1.5]{CD}.

\begin{lemma} \label{lemma: jonsson}
  Let $\class{Q}$ be the quasivariety generated by a class of algebras $\class{K}$. Then each r.f.s.i.\ algebra in $\class{K}$ lies in $\mathbb{ISP}_{\mathrm{U}}(\class{K})$.
\end{lemma}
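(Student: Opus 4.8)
The plan is to prove this quasivariety analogue of Jónsson's lemma by embedding a relatively finitely subdirectly irreducible $\alg{A} \in \class{Q}$ into a \emph{single} ultraproduct of members of $\class{K}$. Write $\mathcal{U} \assign \mathbb{ISP}_{\mathrm{U}}(\class{K})$ for the universal class generated by $\class{K}$; being a universal class, $\mathcal{U}$ is closed under $\mathbb{S}$ and $\mathbb{P}_{\mathrm{U}}$, and the quasivariety it generates is $\class{Q} = \mathbb{ISP}(\mathcal{U})$. Hence $\alg{A}$ embeds into a product of $\mathcal{U}$-algebras, and after replacing each factor by the image of $\alg{A}$ we obtain a subdirect embedding $\alg{A} \into \prod_{i \in I} \alg{B}_{i}$ with each $\alg{B}_{i} \in \mathcal{U}$. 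Writing $\pi_{i} \colon \alg{A} \to \alg{B}_{i}$ for the projections, each $\theta_{i} \assign \ker \pi_{i}$ is a $\class{Q}$-congruence (since $\alg{A} / \theta_{i} \iso \pi_{i}[\alg{A}] \leq \alg{B}_{i} \in \mathcal{U} \subseteq \class{Q}$) and $\bigcap_{i \in I} \theta_{i} = \idcon_{\alg{A}}$.

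The goal is then to find an ultrafilter $U$ on $I$ such that the induced map $\alg{A} \to (\prod_{i \in I} \alg{B}_{i}) / U$ is injective: for then $\alg{A}$ embeds into $(\prod_{i \in I} \alg{B}_{i}) / U \in \mathbb{P}_{\mathrm{U}}(\mathcal{U}) \subseteq \mathcal{U} = \mathbb{ISP}_{\mathrm{U}}(\class{K})$, which is exactly what we want. For distinct $a, b \in \alg{A}$ I would set $D_{a,b} \assign \set{i \in I}{\pi_{i}(a) \neq \pi_{i}(b)}$, which is non-empty because the embedding is injective. The composite map into the ultraproduct is injective precisely when $D_{a,b} \in U$ for all $a \neq b$, and an ultrafilter with this property exists as soon as the family $\set{D_{a,b}}{a \neq b}$ has the finite intersection property.

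The crux is establishing this finite intersection property, and it is here that relative finite subdirect irreducibility is used. Given distinct pairs $(a_{1}, b_{1}), \dots, (a_{n}, b_{n})$, suppose towards a contradiction that $D_{a_{1},b_{1}} \cap \dots \cap D_{a_{n},b_{n}} = \emptyset$. Then every coordinate collapses at least one pair, so $I = I_{1} \cup \dots \cup I_{n}$ where $I_{k} \assign \set{i \in I}{\pair{a_{k}}{b_{k}} \in \theta_{i}}$. Put $\phi_{k} \assign \bigcap_{i \in I_{k}} \theta_{i}$. Since $\class{Q}$-congruences are closed under arbitrary intersection (as $\class{Q}$ is closed under $\mathbb{S}$ and $\mathbb{P}$), each $\phi_{k}$ is a $\class{Q}$-congruence, and $\pair{a_{k}}{b_{k}} \in \phi_{k}$ by construction. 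But $\bigcap_{k=1}^{n} \phi_{k} = \bigcap_{i \in I} \theta_{i} = \idcon_{\alg{A}}$, so relative finite subdirect irreducibility forces $\phi_{k} = \idcon_{\alg{A}}$ for some $k$, contradicting $a_{k} \neq b_{k}$. Thus the family has the finite intersection property, it extends to an ultrafilter $U$ containing every $D_{a,b}$, and the argument concludes as above.

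The main obstacle is precisely this finite-intersection step, which carries the whole content of the lemma: one must convert the \emph{finite} in r.f.s.i.\ into the finite intersections needed to build the ultrafilter, the key device being to absorb the (possibly infinitely many) coordinates that collapse a given pair into the single $\class{Q}$-congruence $\phi_{k}$. The remaining ingredients are routine standard facts: that $\mathcal{U}$ is closed under $\mathbb{P}_{\mathrm{U}}$, that $\class{Q}$-congruences are closed under arbitrary meets, the identity $\class{Q} = \mathbb{ISP}(\mathcal{U})$, and the extension of a family with the finite intersection property to an ultrafilter.
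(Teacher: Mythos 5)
Your proof is correct. There is, however, no internal proof to compare it against: the paper does not prove Lemma~\ref{lemma: jonsson} but imports it from Czelakowski and Dziobiak~\cite[Lemma~1.5]{CD}, and your argument is in substance the standard ultrafilter proof of that result. All three pillars of your argument are sound: (a) the decomposition $\class{Q} = \mathbb{ISP}(\mathcal{U})$ for $\mathcal{U} \assign \mathbb{ISP}_{\mathrm{U}}(\class{K})$, which follows from Mal'cev's description $\class{Q} = \mathbb{ISPP}_{\mathrm{U}}(\class{K})$ together with the fact that $\mathcal{U}$ is the universal class generated by $\class{K}$ and hence closed under $\mathbb{S}$ and $\mathbb{P}_{\mathrm{U}}$; (b) the finite intersection property of the sets $D_{a,b}$, where your key move --- absorbing all coordinates collapsing $\pair{a_{k}}{b_{k}}$ into the single $\class{Q}$-congruence $\phi_{k} \assign \bigcap_{i \in I_{k}} \theta_{i}$, legitimate because $\class{Q}$-congruences are closed under arbitrary intersections --- is exactly what converts relative \emph{finite} subdirect irreducibility into control over the infinite family $(\theta_{i})_{i \in I}$; and (c) the conclusion via extension to an ultrafilter and closure of $\mathcal{U}$ under $\mathbb{P}_{\mathrm{U}}$ and $\mathbb{S}$. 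Two minor remarks. First, the lemma as printed says ``each r.f.s.i.\ algebra \emph{in $\class{K}$}'', which would be vacuously true; you have, rightly, proved the intended statement with $\class{Q}$ in place of $\class{K}$, which is the form the paper actually uses later (to conclude that r.f.s.i.\ semi-prime-pointed lattices are prime-pointed). Second, your ultrafilter lives on $I$, so strictly speaking one should note that $I \neq \emptyset$; this is automatic, since under the paper's definition a trivial algebra is not r.f.s.i.\ (the empty family of $\class{Q}$-congruences already witnesses the failure), and a non-trivial $\alg{A}$ cannot embed into an empty product.
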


\begin{theorem}
  In the quasivariety of semi-prime-pointed lattices:
\begin{enumerate}[(i)]
\item The relatively simple (dually) integral algebras are the (dually) integral two-element chains.
\item The r.s.i.\ (r.f.s.i.)\ integral algebras are the integral two-element chains and the algebras of the form ${ \alg{A} \oplus \one }$ where $\alg{A}$ is an s.i.\ (f.s.i.)\ lattice which does not have a join irreducible top element.
\item The r.s.i.\ (r.f.s.i.)\ dually integral algebras are the s.i.\ (f.s.i.)\ dually integral pointed lattices.
\item The r.s.i.\ (r.f.s.i.)\ conic algebras are the r.s.i.\ (r.f.s.i.)\ integral algebras and the r.s.i.\ (r.f.s.i.)\ dually integral algebras.
\end{enumerate}
\end{theorem}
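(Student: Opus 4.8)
The plan is to reduce everything to prime-pointed lattices and then to analyse $\class{Q}$-congruences cone by cone, where $\class{Q}$ denotes the quasivariety of semi-prime-pointed lattices. Since $\class{Q}$ is generated as a quasivariety by the class of prime-pointed lattices, and this class is closed under $\mathbb{ISP}_{\mathrm{U}}$ (ultraproducts preserve the first-order property that $\one$ is join prime, and subalgebras preserve it by our convention that bottom elements count as join prime), Lemma~\ref{lemma: jonsson} shows that every r.f.s.i.\ (hence every r.s.i.) algebra of $\class{Q}$ already lies in $\mathbb{ISP}_{\mathrm{U}}$ of the prime-pointed lattices, and is therefore itself prime-pointed. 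Thus I may assume throughout that the algebra is prime-pointed, recalling that a non-trivial integral prime-pointed lattice has the form $\alg{A} \oplus \one$ for a lattice $\alg{A}$, while every dually integral pointed lattice is automatically prime-pointed.

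First I would dispatch (iii). If $\alg{B}$ is dually integral then so is every quotient $\alg{B} / \theta$ (the image of $\one$ stays the bottom), so every quotient is prime-pointed and lies in $\class{Q}$; hence every congruence of $\alg{B}$ is a $\class{Q}$-congruence, and relative (finite) subdirect irreducibility coincides with ordinary (finite) subdirect irreducibility, giving (iii). Part (iv) I would read off from Fact~\ref{fact: sp of cones}: a conic prime-pointed lattice $\alg{A}$ decomposes subdirectly through the two $\class{Q}$-congruences $\theta_{\down}, \theta_{\up}$ with $\alg{A} / \theta_{\down} \iso \alg{A}_{-}$ integral, $\alg{A} / \theta_{\up} \iso \alg{A}_{+}$ dually integral (both prime-pointed, hence in $\class{Q}$) and $\theta_{\down} \cap \theta_{\up} = \idcon_{\alg{A}}$; if $\alg{A}$ is r.f.s.i.\ one of these is trivial, so $\alg{A}$ is integral or dually integral and (iv) follows from (ii) and (iii). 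Part (i) is the special case of (ii) and (iii) in which the monolith is the total congruence: for integral algebras this forces $\alg{A}$ to be trivial and leaves the two-element chain, while for dually integral algebras it singles out the simple ones.

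The substance is (ii). Writing $\alg{L} = \alg{A} \oplus \one$, I would first note that the congruences isolating $\one$ are exactly the $\bar{\theta}$ for $\theta \in \Con \alg{A}$ (extended by the singleton class $\{ \one \}$, which is a congruence because $\one$ is a new top), that each such quotient $(\alg{A}/\theta) \oplus \one$ is integral prime-pointed, and hence that these are all $\class{Q}$-congruences and form a copy of $\Con \alg{A}$ inside the $\class{Q}$-congruence lattice. The proposed monolith is $\bar{\mu}$, where $\mu$ is the monolith of $\alg{A}$. For the hard direction I must show that, when $\alg{A}$ is s.i.\ (f.s.i.)\ with no join irreducible top, every non-identity $\class{Q}$-congruence $\theta$ of $\alg{L}$ collapses two distinct elements of $\alg{A}$, and hence contains $\bar{\mu}$. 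If $\theta$ isolates $\one$ this is immediate from subdirect irreducibility of $\alg{A}$. If $\theta$ collapses $\one$ with some $a < \one$, then either $a$ fails to be an upper bound of $\alg{A}$, so some $x \in \alg{A}$ has $x \not\leq a$ and $x = x \wedge \one \mathrel{\theta} x \wedge a < x$ is a non-trivial collapse inside $\alg{A}$; or $a$ is the top $t$ of $\alg{A}$, which by hypothesis is join reducible.

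The crux is this last case, and it is exactly where ``no join irreducible top'' is used. I would prove the sub-fact that if $\alg{A}$ is s.i.\ (f.s.i.)\ with join reducible top $t = p \vee q$, $p, q < t$, then the one-step collapse $\Cg^{\alg{L}} \pair{t}{\one}$, whose only non-trivial pair is $\pair{t}{\one}$, has quotient $\alg{A}$ pointed at $t$, and this quotient violates decomposability at $\one$: since $\Cg \pair{p}{t}$ and $\Cg \pair{q}{t}$ are both non-identity, finite subdirect irreducibility of $\alg{A}$ makes their intersection non-identity. By Theorem~\ref{thm: semi-prime-pointed} this quotient is not semi-prime-pointed, so $\Cg^{\alg{L}} \pair{t}{\one}$ is not a $\class{Q}$-congruence; therefore any $\class{Q}$-congruence collapsing $t$ with $\one$ must strictly exceed it and so collapse some element of $\alg{A}$ with $t$, again a non-trivial collapse inside $\alg{A}$. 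In every case $\theta$ meets $\alg{A}$ non-trivially, proving r.s.i.\ (r.f.s.i.). For the converse I would exhibit, whenever $\alg{A}$ fails to be s.i.\ (f.s.i.)\ or has a join irreducible top $t$ (so $\alg{A} = \alg{A}' \oplus t$), two non-identity $\class{Q}$-congruences meeting in $\idcon_{\alg{L}}$: in the first case two lifted congruences from $\alg{A}$; in the second the pair $\bar{\mu}$ and $\Cg^{\alg{L}} \pair{t}{\one}$, the latter having prime-pointed quotient $\alg{A}' \oplus \one$ and meeting $\bar{\mu}$ trivially because its only non-trivial pair involves $\one$. The main obstacle is precisely the sub-fact above: seeing that for a (finitely) subdirectly irreducible $\alg{A}$ join reducibility of the top resurfaces one level up as a failure of decomposability at $\one$, which is what kills the competing ``downward'' monolith and makes ``no join irreducible top'' the exact dividing line.
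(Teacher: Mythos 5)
Your proposal is correct in substance and shares the paper's skeleton: Lemma~\ref{lemma: jonsson} to reduce r.f.s.i.\ algebras to prime-pointed ones, the cone decomposition (Fact~\ref{fact: sp of cones}) for (iv), and for (iii) the observation that quotients of dually integral pointed lattices stay dually integral, so that relative and absolute subdirect irreducibility coincide. Where you genuinely diverge is in the core part (ii). The paper uses the \emph{embedding} formulation of relative (finite) subdirect irreducibility: it lifts a lattice embedding $\alg{A} \into \prod_{i} \alg{B}_{i}$ to a pointed embedding of $\alg{A} \oplus \one$ into $\prod_{i} (\alg{B}_{i} \oplus \one)$, and the hypothesis on the top enters through the elementary fact that join \emph{reducibility} is preserved by lattice embeddings, contradicting the join irreducibility of $\one$ in $\alg{B}_{i} \oplus \one$. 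You use the \emph{congruence} formulation instead, showing that every non-identity $\class{Q}$-congruence of $\alg{A} \oplus \one$ restricts to a non-identity congruence of $\alg{A}$; your crux is the sub-fact that when $\alg{A}$ is f.s.i.\ with join reducible top $t$, the quotient by $\Cg^{\alg{L}}\pair{t}{\one}$ (namely $\alg{A}$ pointed at $t$) fails decomposability at $\one$, hence is not in $\class{Q}$ by Theorem~\ref{thm: semi-prime-pointed}. That citation is legitimate and non-circular. Your route buys an explicit picture of the $\class{Q}$-congruence lattice of $\alg{A} \oplus \one$ and makes visible that ``no join irreducible top'' is exactly the condition deciding whether $\Cg^{\alg{L}}\pair{t}{\one}$ is a $\class{Q}$-congruence; the paper's route is shorter and needs only the preservation of join reducibility under embeddings, with no appeal to the decomposability axiomatics.

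Three repairs, all minor. First, in your converse for the r.f.s.i.\ case you pair $\Cg^{\alg{L}}\pair{t}{\one}$ with $\bar{\mu}$, but an f.s.i.\ lattice need not be s.i., so the monolith $\mu$ may not exist; any lifted non-identity congruence works in its place, e.g.\ the lift of the total congruence of $\alg{A}$ (the paper's $\psi$ with classes $A$ and $\{\one\}$), which also meets $\Cg^{\alg{L}}\pair{t}{\one}$ trivially. Second, in the ``$\alg{A}$ not s.i.'' case of the converse, two lifted congruences do not suffice: a lattice can be f.s.i.\ without being s.i., so the witnessing family with trivial intersection may be infinite and you must lift the whole family. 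Third, your part (i) concludes that for dually integral algebras relative simplicity ``singles out the simple ones,'' which is not what the stated theorem claims (it claims only the dually integral two-element chains). This discrepancy, however, lies in the statement rather than in your mathematics: since every dually integral pointed lattice and all of its quotients are prime-pointed, every congruence is a $\class{Q}$-congruence and relative simplicity coincides with simplicity; thus $\Mthree$ pointed at its bottom element is a relatively simple dually integral algebra in $\class{Q}$ that is not a two-element chain. The paper's own proof of (i) in fact treats only the integral case, and your version is what that argument supports.
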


\begin{proof}
  By Lemma~\ref{lemma: jonsson} each r.f.s.i.\ algebra in the quasivariety $\class{Q}$ of semi-prime-pointed lattices is prime-pointed, since $\class{Q}$ is generated by the universal class of prime-pointed lattices. In the following, we only deal with r.s.i.\ algebras. The proofs for r.f.s.i.\ algebras are obtained by obvious modifications of the proofs below.

  Since r.s.i.\ algebras are non-trivial, this means that it has the form $\alg{A} \oplus \one$ for some lattice $\alg{A}$. If $\alg{A}$ is trivial, then $\alg{A} \oplus \one$ is the (relatively simple) two-element chain. Suppose therefore that $\alg{A}$ is non-trivial. Then the equivalence relation with two equivalence classes $A$ and $\{ \one \}$ is a $\class{Q}$-congruence of $\alg{A} \oplus \one$ which is neither the identity nor the total congruence. Consequently, $\alg{A} \oplus \one$ is not simple. Conversely, each two-element chain is relatively simple, proving (i).

  Now suppose moreover that $\alg{A} \oplus \one$ is r.s.i.\ in $\class{Q}$. We show that $\alg{A}$ is subdirectly irreducible. Each lattice embedding $\iota\colon \alg{A} \into \prod_{i \in I} \alg{B}_{i}$ extends to a pointed lattice embedding $\overline{\iota}\colon (\alg{A} \oplus \one) \into \prod_{i \in I} (\alg{B}_{i} \oplus \one)$. Because $\alg{A} \oplus \one$ is r.s.i., there is some $i \in I$ such that the map $\overline{\pi}_{i} \circ \overline{\iota}\colon (\alg{A} \oplus \one) \to (\alg{B}_{i} \oplus \one)$ is an embedding, where $\overline{\pi}_{i}\colon \prod_{i \in I} (\alg{B}_{i} \oplus \one) \to (\alg{B}_{i} \oplus \one)$ is the projection map. Consequently, $\pi_{i} \circ \iota\colon \alg{A} \into \alg{B}_{i}$ is also an embedding.

  Next, suppose that $\alg{A}$ has a join irreducible top element $\top$. Let $\theta$ be the equivalence relation on $\alg{A} \oplus \one$ whose only non-singleton equivalence class is $\{ \top, \one \}$, and let $\psi$ be the equivalence relation on $\alg{A} \oplus \one$ with the two equivalence classes $\{ \one \}$ and $A$. Because $\top$ is join irreducible, $\theta$ is a $\class{Q}$-congruence of $\alg{A} \oplus \one$. So is $\psi$, since $(\alg{A} \oplus \one) / \psi$ is a two-element chain. But $\theta \cap \psi = \idcon_{\alg{A}}$, so $\alg{A}$ fails to be r.s.i. This proves the left-to-right inclusion in (ii).

  Conversely, let $\alg{A}$ be a subdirectly irreducible lattice which does not have a join irreducible top element and let $\iota\colon (\alg{A} \oplus \one) \into \prod_{i \in I} \alg{C}_{i}$ for $i \in I$ be a pointed lattice embedding with $\alg{C}_{i} \in \class{Q}$. We need to find $i \in I$ such that $\pi_{i} \circ \iota\colon (\alg{A} \oplus \one) \to \alg{C}_{i}$ is an embedding. It suffices to deal with the case where the algebras $\alg{C}_{i} \in \class{Q}$ are r.s.i.\ and therefore prime-pointed. Indeed, we may assume that $\alg{C}_{i} = \alg{B}_{i} \oplus \one$ for some lattice $\alg{B}_{i}$. Let $\varepsilon\colon \alg{A} \into \alg{A} \oplus \one$ be the inclusion embedding. Because $\alg{A}$ is subdirectly irreducible, the embedding $\iota \circ \varepsilon\colon \alg{A} \into \prod_{i \in I} (\alg{B}_{i} \oplus \one)$ yields some $i \in I$ such that $\pi_{i} \circ \iota \circ \varepsilon\colon \alg{A} \into (\alg{B}_{i} \oplus \one)$ is an embedding.

  We claim that the homomorphism $\pi_{i} \circ \iota\colon (\alg{A} \oplus \one) \to (\alg{B}_{i} \oplus \one)$ is also an embedding. If not, there is some $a \in \alg{A}$ such that $(\pi_{i} \circ \iota \circ \varepsilon)(a) = \one \in (\alg{B}_{i} \oplus \one)$. Because $\pi_{i} \circ \iota \circ \varepsilon$ is an embedding, there can be at most one such $a \in \alg{A}$. But if $(\pi_{i} \circ \iota \circ \varepsilon)(a) = \one$, then $(\pi_{i} \circ \iota \circ \varepsilon)(b)$ for each $b \geq a$, therefore $a$ must be the largest element of~$\alg{A}$. By assumption, $a$ is not join irreducible in the non-trivial lattice $\alg{A}$, therefore neither is the element $(\pi_{i} \circ \iota \circ \varepsilon)(a) = \one$ in $\alg{B}_{i} \oplus \one$, since $\pi_{i} \circ \iota \circ \varepsilon$ is a lattice embedding. But this is a contradiction: the element $\one$ in fact is join irreducible in $\alg{B}_{i} \oplus \one$. This proves the right-to-left inclusion in (ii).

  (iii): trivial. (iv): this follows from the fact that each semiconic pointed lattice is a subdirect product of an integral and a dually integral one (Fact~\ref{fact: sp of cones}).
\end{proof}

\section{Pointed lattice subreducts of varieties of RLs}
\label{section: rls}

  We shall now describe the pointed lattice subreducts of some varieties of RLs. Given a positive universal class $\class{K}$ such that $\class{K} \vDash \one \wedge (x \vee y) \equals (\one \wedge x) \vee (\one \wedge y)$, our first result (Theorem~\ref{thm: main}) states that the class of pointed lattice subreducts of semi-$\class{K}$ RL and CRLs, i.e.\ RLs and CRLs in the varieties $\mathbb{V}(\RL_{\class{K}})$ and $\mathbb{V}(\CRL_{\class{K}})$ generated by RLs and CRLs whose pointed lattice reduct lies in~$\class{K}$, is the class of semi-$\class{K}_{\pp}$ pointed lattices, i.e.\ the (quasi)variety generated by the prime-pointed lattices in $\class{K}$. This in particular describes the pointed lattice subreducts of integral and of semiconic RLs and CRLs (Corollary~\ref{cor: main}). Our second result (Theorem~\ref{thm: pre-k}) states that the pointed lattice subreducts of left pre-$\class{K}$ RLs (which in general properly contains the variety of semi-$\class{K}$ RLs) coincide with the pointed lattice subreducts of semi-$\class{K}$ pointed lattices. In particular, the pointed lattice subreducts of left prelinear RLs are precisely the distributive pointed lattices.

  A \emph{residuated lattice}, abbreviated as \emph{RL}, is an algebra $\langle A; \wedge, \vee, \cdot, \one, \bs, / \rangle$ such that $\langle A; \wedge, \vee \rangle$ is a lattice, $\langle A; \cdot, \one \rangle$ is a monoid, the multiplication preserves the lattice order~$\leq$ in both co-ordinates, and the left and right division operations $\bs$ and $/$ are linked to multiplication by the \emph{residuation laws}:
\begin{align*}
  x \leq z / y \iff x \cdot y \leq z \iff y \leq x \bs z.
\end{align*}
  Left and right division are thus uniquely determined by the multiplication and the lattice order. Residuated lattices form a variety.

  A \emph{commutative residuated lattice}, abbreviated as \emph{CRL}, is a residuated lattice whose monoidal reduct is commutative, i.e.\ it satisfies the equation $x \cdot y \equals y \cdot x$. The two residuals then coincide and we can use the symbol $\rightarrow$ for both of them.

  We use the following order of precedence: products bind more tightly than divisions, which in turn bind more tightly than lattice operations. For the basic arithmetic laws of residuated lattices, see~\cite{Glimpse,RLBook}.

  The \emph{pointed lattice reduct} of a residuated lattice $\alg{A}$ is the reduct $\langle A; \wedge, \vee, \one \rangle$. Given a positive universal class $\class{K}$ of pointed lattices, we use the following notation:
\begin{itemize}
\item $\RL_{\class{K}}$ is the class of all RLs whose pointed lattice reduct lies in $\class{K}$,
\item $\CRL_{\class{K}}$ is the class of all CRLs whose pointed lattice reduct lies in $\class{K}$.
\end{itemize}
  Clearly $\RL_{\class{K}}$ and $\CRL_{\class{K}}$ are positive universal classes, so J\'{o}nsson's lemma ensures, as it did for pointed lattices, that the variety generated by $\RL_{\class{K}}$ ($\CRL_{\class{K}}$) is the class of all algebras isomorphic to subdirect products of RLs in $\RL_{\class{K}}$ (of CRLs in $\CRL_{\class{K}}$).

\begin{definition}
  The class of \emph{semi-$\class{K}$ RLs (CRLs)} is the variety generated by $\RL_{\class{K}}$ (by $\CRL_{\class{K}}$), or equivalently the class of all algebras isomorphic to a subdirect product of RLs in $\RL_{\class{K}}$ (of CRLs in $\CRL_{\class{K}}$).
\end{definition}

  For example, a semiconic RL is up to isomorphism a subdirect product of RLs with a conic pointed lattice reduct. It is important to keep in mind that being a semi-$\class{K}$ RL or CRL is in general a stronger property than being a RL or a CRL with a semi-$\class{K}$ pointed lattice reduct. For example, being a semiconic or a semilinear RL is a stronger property than having a semiconic or a semilinear (i.e.\ distributive) or a pointed lattice reduct, as we shall see below.

  Semiconic RLs were introduced in~\cite{ConicHR} and a structure theory for semiconic idempotent ($x \cdot x \equals x$) RLs was recently developed in~\cite{ConicFG}. The term \emph{well-connected} is commonly used for RLs whose pointed lattice reduct is prime-pointed. Well-connected RLs arise in connection with the disjunction property~\cite[Theorem~5.2.1]{Glimpse}.  

  We first show, using a non-integral generalization of a construction known as \emph{drastic multiplication}, that each prime-pointed lattice is a subreduct of a CRL.

\begin{lemma} \label{lemma: drastic}
  Let $\alg{A}$ be a bounded pointed lattice with a splitting pair $\pair{\one}{\coone}$, i.e.\ $\one \nleq \coone$ and for each $a \in \alg{A}$ either $\one \leq a$ or $a \leq \coone$. Then $\alg{A}$ is the pointed lattice reduct of a simple CRL with the following operations:
\begin{align*}
  a \cdot b \assign
  \begin{cases}
    \bot & \text{ if } a \ngeq \one \text{ and } b \ngeq \one, \\
    a & \text{ if } a \ngeq \one \text{ and } b \geq \one, \\
    b & \text{ if } a \geq \one \text{ and } b \ngeq \one, \\
    a \vee b & \text{ if } a \geq \one \text{ and } b \geq \one,
  \end{cases}
\end{align*}
  and
\begin{align*}
  a \rightarrow b \assign
  \begin{cases}
    \top & \text{ if } a \ngeq \one \text{ and } a \leq b, \\
    \coone & \text{ if } a \ngeq \one \text{ and } a \nleq b, \\
    b & \text{ if } a \geq \one \text{ and } b \ngeq \one, \\
    b & \text{ if } a \geq \one \text{ and } b \geq \one \text{ and } a \leq b, \\
    b \wedge \coone & \text{ if } a \geq \one \text{ and } b \geq \one \text{ and } a \nleq b.
  \end{cases}
\end{align*}
\end{lemma}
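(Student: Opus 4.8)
The plan is to read off from the splitting pair a partition of the universe into the upset $P \assign \up \one$ and its complement $N \assign \alg{A} \setminus P$, which equals $\down \coone$; these are disjoint precisely because $\one \nleq \coone$. Every clause defining $\cdot$ and $\rightarrow$ is keyed to whether each argument lies in $P$ or in $N$, so the operations are well defined, and I would first check that they make $\alg{A}$ a CRL and then establish simplicity.

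For the CRL structure I would verify the defining properties in order. Commutativity of $\cdot$ is immediate from the symmetry of the four clauses, and $\one \in P$ acts as a unit by a one-line check in each region. Associativity amounts to comparing $(a \cdot b) \cdot c$ with $a \cdot (b \cdot c)$ over the $2^{3} = 8$ distributions of $a, b, c$ between $P$ and $N$; in every case both sides reduce to the same element, namely the join of all three arguments when they all lie in $P$, the unique $N$-argument when exactly one lies in $N$, and $\bot$ when at least two lie in $N$. The crucial property is residuation, $a \cdot b \leq c \iff b \leq a \rightarrow c$, which I would prove by computing the set $\{ b : a \cdot b \leq c \}$ for each combination of the regions of $a$ and $c$ (refined by whether $a \leq c$) and checking that it equals $\down (a \rightarrow c)$; the key points are that $N = \down \coone$ absorbs the meets with $\coone$ appearing in the last clauses of $\rightarrow$, and that $\one \leq a \leq c$ is impossible when $c \in N$. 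Order-preservation of $\cdot$ then follows from residuation (or is read off the same case split). Since the lattice operations and the constant $\one$ are untouched, the pointed lattice reduct of the resulting CRL is exactly $\alg{A}$.

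For simplicity I would argue directly that every congruence $\theta \neq \idcon_{\alg{A}}$ is total. Given $a \mathrel{\theta} b$ with $a \neq b$, replacing the pair by $a \wedge b < a \vee b$ yields a strictly comparable related pair $p < q$; since $N$ is a downset and $P$ an upset, exactly three cases occur: $p, q \in N$, or $p \in N$ and $q \in P$, or $p, q \in P$. The engine is the pair of identities $x \to x = \top$ and $x \cdot x = \bot$ valid for every $x \in N$, together with $q \to p = p$ when $q \in P$ and $p \in N$. In the straddling case, applying $\theta$ to $p \to p$ and $q \to p$ gives $\top \mathrel{\theta} p$, and multiplying through by $p$ gives $p \mathrel{\theta} \bot$, whence $\top \mathrel{\theta} \bot$; the case $p, q \in N$ reduces to this via $p \to p = \top \mathrel{\theta} \coone = q \to p$ and $\coone \cdot \coone = \bot$, while the case $p, q \in P$ first produces a straddling pair $q \mathrel{\theta} (p \wedge \coone)$ from $q \to q = q$ and $q \to p = p \wedge \coone \in N$. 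In every case $\top \mathrel{\theta} \bot$, and meeting this with an arbitrary $x$ gives $x \mathrel{\theta} \bot$; hence $\theta$ identifies everything. As $\alg{A}$ is non-trivial (it contains the distinct elements $\one$ and $\coone$), the CRL is simple.

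The main obstacle is purely organizational: keeping the region information straight through the associativity and residuation case analyses. Once the defining identities $x \cdot x = \bot$ and $x \to x = \top$ for $x \in N$, and the absorbing behaviour of $\coone$ in $N$, are isolated, both the CRL verification and the simplicity argument fall out mechanically.
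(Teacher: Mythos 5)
Your proof is correct, and its first half proceeds exactly as the paper does: commutativity is immediate, associativity and residuation are verified by the same region-based case analysis over $P = \up\one$ and $N = \down\coone$ (the paper simply declares this ``a straightforward case analysis'' and omits the details you supply, all of which check out), and monotonicity is inherited from residuation. Where you genuinely diverge is the simplicity argument. The paper invokes the standard fact that every non-trivial congruence $\theta$ of a residuated lattice contains a pair $\pair{a}{\one}$ with $a < \one$; since such an $a$ satisfies $a \ngeq \one$, drastic multiplication gives $\pair{\bot}{\one} = \pair{a^{2}}{\one^{2}} \in \theta$, and then $\pair{\bot}{x} = \pair{\bot \cdot x}{\one \cdot x} \in \theta$ for every $x$. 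Your argument avoids citing that fact entirely: you pass from an arbitrary related pair to a comparable pair $p < q$, split into three cases according to which of $N$, $P$ contains $p$ and $q$, and in each case compute $\top \mathrel{\theta} \bot$ directly from the identities $x \rightarrow x = \top$ and $x \cdot x = \bot$ on $N$. Both routes are valid; the paper's is shorter because the cited fact (congruences of RLs are determined by their $\one$-classes) immediately hands it a related pair straddling the split at $\one$, which is precisely your ``straddling'' case, so the other two cases evaporate. Your version buys self-containedness --- it uses only that $\theta$ is a lattice congruence compatible with $\cdot$ and $\rightarrow$ --- at the cost of the two extra cases ($p, q \in N$ and $p, q \in P$) needed to manufacture a straddling pair.
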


\begin{proof}
  Multiplication is clearly commutative. It suffices to verify the associativity of multiplication and that division is the residual of multiplication. (The monotonicity of multiplication then follows.) This is a straightforward case analysis. This residuated lattice is simple: each non-trivial congruence $\theta$ of a residuated lattice contain a pair of the form $\pair{a}{\one}$ for some $a < \one$, but then $\pair{\bot}{\one} = \pair{a^{2}}{\one^{2}} \in \theta$ and $\pair{\bot}{x} = \pair{\bot \cdot x}{\one \cdot x} \in \theta$ for each $x$, so $\theta$ is the total congruence.
\end{proof}

  The above multiplication is the smallest residuated multiplication operation (or equivalently, the smallest multiplication operation distributing over finite joins) which has $\one$ as a unit. In addition to being simple, the above CRL satisfies a number of non-trivial equations, such as ${x^{2} \equals x^{3}}$ and $\one \wedge (x \cdot y) \equals (\one \wedge x) \cdot (\one \wedge y)$.

\begin{theorem} \label{thm: drastic}
  Each prime-pointed lattice in a positive universal class of pointed lattices $\class{K}$ is a subreduct of a (simple) CRL with a pointed lattice reduct in $\class{K}$.
\end{theorem}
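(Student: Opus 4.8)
The plan is to reduce the statement to Lemma~\ref{lemma: drastic}: it suffices to embed the given prime-pointed lattice $\alg{A} \in \class{K}$ into a \emph{bounded} pointed lattice $\alg{B} \in \class{K}$ carrying a splitting pair $\pair{\one}{\coone}$, since the drastic multiplication then makes $\alg{B}$ the reduct of a simple CRL and $\alg{A}$ embeds into $\alg{B}$ as a pointed lattice, hence is a subreduct. The real work is to manufacture such a $\alg{B}$ without leaving $\class{K}$.

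First I would pass to the ideal completion. By Fact~\ref{fact: idl preserves positive universal sentences} we have $\Idl \alg{A} \in \class{K}$, and by Lemma~\ref{lemma: join prime in idl} the element $\down \one$ is completely join prime in $\Idl \alg{A}$. Assuming $\one$ is not the least element of $\alg{A}$, put $\coone \assign \bigvee \set{I \in \Idl \alg{A}}{\one \notin I}$. Complete join primeness forces $\one \notin \coone$, so $\coone$ is the largest $\one$-proper ideal and $\down \one \nleq \coone$; moreover every ideal $I$ satisfies $\down \one \subseteq I$ (if $\one \in I$) or $I \subseteq \coone$ (if $\one \notin I$). Thus $\pair{\down \one}{\coone}$ is a splitting pair of $\Idl \alg{A}$, which in addition has a top, namely the improper ideal $A$.

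The main obstacle is that $\Idl \alg{A}$ need have no least element: it has one exactly when $\alg{A}$ does, and typical prime-pointed lattices (an integer chain pointed at $\mathsf{0}$, say) have none. Here I would exploit that a positive universal class is closed under $\mathbb{S}$ and $\mathbb{P}_{\mathrm{U}}$. A lattice with no least element has no minimal element at all, since two distinct minimal elements would have a meet below both; hence any finite meet still lies strictly above something, so the type asserting ``$c$ lies below every element'' over $\Idl \alg{A}$ is finitely satisfiable and is realized by some $c$ in an ultrapower $(\Idl \alg{A})^{*}$. Let $\alg{B}$ be the pointed sublattice of $(\Idl \alg{A})^{*}$ generated by the diagonal image of $\Idl \alg{A}$ together with $c$. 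Then $\alg{B} \in \mathbb{ISP}_{\mathrm{U}}(\class{K}) \subseteq \class{K}$, its bottom is $c$ and its top is the image of $A$, and by transfer every element of $(\Idl \alg{A})^{*}$ — a fortiori of $\alg{B}$ — inherits the first-order splitting condition and the inequality $\down \one \nleq \coone$, so the splitting pair survives. When $\Idl \alg{A}$ already has a least element this detour is unnecessary and I would simply take $\alg{B} \assign \Idl \alg{A}$.

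With a bounded $\alg{B} \in \class{K}$ carrying the splitting pair $\pair{\one}{\coone}$ in hand, Lemma~\ref{lemma: drastic} furnishes a simple CRL on $\alg{B}$ whose pointed lattice reduct is $\alg{B} \in \class{K}$, and the composite $\alg{A} \into \Idl \alg{A} \into \alg{B}$ exhibits $\alg{A}$ as the desired subreduct. The construction genuinely requires that $\one$ not be the least element of $\alg{A}$: in the degenerate dually integral case no splitting pair can exist, which is unavoidable since residuation forces every dually integral CRL to be trivial, so there the embedding must instead route $\alg{A}$ through a non-dually-integral prime-pointed member of $\class{K}$ before the drastic multiplication is applied.
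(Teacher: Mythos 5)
Your proposal follows the same route as the paper's proof, which consists of exactly three steps: pass to $\Idl \alg{A}$, which stays in $\class{K}$ by Fact~\ref{fact: idl preserves positive universal sentences} and in which $\one$ is completely join prime by Lemma~\ref{lemma: join prime in idl}; observe that $\pair{\one}{\coone}$ with $\coone \assign \bigvee \set{I \in \Idl \alg{A}}{\one \notin I}$ is a splitting pair; and invoke Lemma~\ref{lemma: drastic}. Where you go beyond the paper is the ultrapower step, and this is not a detour but the repair of a genuine gap in the paper's own argument: Lemma~\ref{lemma: drastic} is stated for \emph{bounded} pointed lattices, and while $\Idl \alg{A}$ always has a top (the ideal $A$ itself), it need not have a bottom --- for the linear prime-pointed lattice $\Z$ pointed at $0$, the ideal completion is $\Z$ with a top adjoined. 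Your fix works: the lower-bound type is realized in a suitable ultrapower, the splitting condition and $\one \nleq \coone$ transfer by \L{}o\'{s}'s theorem with the diagonal images as parameters, and the generated sublattice lies in $\class{K}$ since positive universal classes are closed under $\mathbb{SP}_{\mathrm{U}}$. Two remarks: your justification that a bottomless lattice has no minimal element is garbled (a minimal element of a lattice is automatically its least element; incomparability of two minimal elements is not the point), but nothing needs it, since finite satisfiability of the type only requires finite meets; and the ultrapower can be avoided altogether, because adjoining a new bottom to a lattice with no least element preserves all universal sentences --- every finitely generated sublattice embeds back by sending the new bottom to any element strictly below the meet of the generators.

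Your closing caveat about the dually integral case is also well taken, and it cuts against the paper more than against you: the paper's definition of $\coone$ silently presupposes that some ideal omits $\one$, i.e.\ that $\one$ is not the least element of $\alg{A}$. However, your proposed workaround (routing $\alg{A}$ through a non-dually-integral prime-pointed member of $\class{K}$) is not available in general. Take $\class{K}$ to be the variety of dually integral pointed lattices: under the paper's convention that the bottom element counts as join prime, every member of $\class{K}$ is prime-pointed, yet no non-trivial member is a subreduct of a CRL with pointed lattice reduct in $\class{K}$, precisely because, as you note, dually integral CRLs are trivial. So in this degenerate case the statement of Theorem~\ref{thm: drastic} itself fails, and no proof can close that hole; the theorem (and its use in Theorem~\ref{thm: main}) implicitly excludes pointed lattices in which $\one$ is the least element. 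Modulo this defect, which is the paper's and not yours, your proof is correct and is in fact more careful than the published one.
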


\begin{proof}
  If $\alg{A} \in \class{K}$ is prime-pointed, then its ideal completion $\Idl \alg{A}$ lies in $\class{K}$ by Fact~\ref{fact: idl preserves positive universal sentences} and the constant $\one$ is completely join prime in $\Idl \alg{A}$ by Lemma~\ref{lemma: join prime in idl}. The pair $\pair{\one}{\coone}$ for $\coone \assign \bigvee \set{x \in \alg{A}}{a \nleq x}$ is thus a splitting pair in $\Idl \alg{A}$, which is therefore the pointed lattice reduct of a simple CRL by Lemma~\ref{lemma: drastic}.
\end{proof}

  The following corollary is well known.

\begin{corollary}
  Each lattice is a subreduct of a (simple) integral CRL.
\end{corollary}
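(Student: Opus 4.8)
The plan is to obtain the corollary as a direct specialization of Theorem~\ref{thm: drastic} to the case where $\class{K}$ is the variety of integral pointed lattices, after first enlarging the given lattice to an integral prime-pointed one. Given an arbitrary lattice $\alg{L}$, I would form the integral pointed lattice $\alg{L} \oplus \one$ obtained by appending a new top element $\one$ above all of $\alg{L}$, exactly the construction recalled before Fact~\ref{fact: semi-prime-pointed}. Here $\one$ is the top element, so it is join prime for a trivial reason: if $x \vee y \gequals \one$ then $x \vee y \equals \one$, and since the join of two elements of $\alg{L}$ stays inside $\alg{L}$ (hence strictly below $\one$), we must have $x \equals \one$ or $y \equals \one$, that is $x \gequals \one$ or $y \gequals \one$. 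Thus $\alg{L} \oplus \one$ is an integral prime-pointed lattice, and $\alg{L}$ is a sublattice of its lattice reduct.

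Next I would take $\class{K}$ to be the variety of all integral pointed lattices, which is a positive universal class in the sense required by Theorem~\ref{thm: drastic}. Since $\alg{L} \oplus \one$ is a prime-pointed lattice lying in $\class{K}$, that theorem furnishes a simple CRL $\alg{B}$ whose pointed lattice reduct is integral and of which $\alg{L} \oplus \one$ is a pointed lattice subreduct. Forgetting the constant $\one$, the lattice $\alg{L}$ is therefore a sublattice of the lattice reduct of $\alg{B}$, i.e.\ a lattice subreduct of a simple integral CRL, which is exactly what is claimed.

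I do not expect a genuine obstacle, since the corollary is essentially Theorem~\ref{thm: drastic} read off at a particular $\class{K}$ together with the elementary embedding $\alg{L} \into \alg{L} \oplus \one$. The only point deserving a word of care is that the construction underlying Theorem~\ref{thm: drastic} does not act on $\alg{L} \oplus \one$ directly but on its ideal completion $\Idl(\alg{L} \oplus \one)$, which supplies the bounded pointed lattice with a splitting pair demanded by Lemma~\ref{lemma: drastic}; this is unproblematic because integrality transfers to the ideal completion by Fact~\ref{fact: idl preserves positive universal sentences} and the constant becomes completely join prime there by Lemma~\ref{lemma: join prime in idl}, so the splitting pair exists and the drastic multiplication applies.
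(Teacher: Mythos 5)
Your proof is correct and is essentially the paper's own argument: the paper likewise embeds an arbitrary lattice $\alg{L}$ into the prime-pointed unital lattice $\alg{L} \oplus \one$ and then invokes Theorem~\ref{thm: drastic} with $\class{K}$ the (positive universal, indeed equational) class of integral pointed lattices. Your closing remark about the ideal completion merely unfolds what happens inside the proof of Theorem~\ref{thm: drastic}, and it is accurate.
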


\begin{proof}
  Each lattice $\alg{A}$ is a subreduct of the prime-pointed unital lattice $\alg{A} \oplus \one$.
\end{proof}

  We now turn to the converse problem: given a RL or a CRL in a certain variety, what can we say about its pointed lattice reduct? We show that each RL is semi-irreducible-pointed as a pointed lattice. To prove this for CRLs, it is sufficient to observe that every (finitely) subdirectly irreducible CRL is irreducible-pointed. This was proved in~\cite[p.~262]{Glimpse} (for subdirectly irreducible integral CRLs, but the same argument works for arbitrary finitely subdirectly irreducible CRLs). Because every CRL is isomorphic to a subdirect product of (finitely) subdirectly irreducible CRLs, it must therefore be semi-irreducible-pointed.

  The above argument does not go through for general RLs, since subdirectly irreducible RLs need not be irreducible-pointed (see~\cite[Figure~3.7, p.~203]{Glimpse}). This reflects the fact that, beyond the commutative case, it is not sufficient to only consider quotients of RLs with respect to congruences. Rather, we need to consider left and right congruences of RLs.

\begin{definition}
  A \emph{left congruence} of a RL $\alg{A}$ is a lattice congruence $\theta$ such that for all $a, x, y \in \alg{A}$
\begin{align*}
  \pair{x}{y} \in \theta \implies \pair{a \cdot x}{a \cdot y} \in \theta \text{ and } \pair{a \bs x}{a \bs y} \in \theta.
\end{align*}
  A \emph{right congruence} of $\alg{A}$ is a lattice congruence $\theta$ such that for all $a, x, y \in \alg{A}$
\begin{align*}
  \pair{x}{y} \in \theta \implies \pair{x \cdot a}{y \cdot a} \in \theta \text{ and } \pair{x / a}{y / x} \in \theta.
\end{align*}
\end{definition}

  The lattice of all left (right) congruences of a RL $\alg{A}$ will be denoted by $\LCon \alg{A}$ (by $\RCon \alg{A}$) and the lattice of all congruences of $\alg{A}$ by~$\Con \alg{A}$. The lattice of all congruences of the lattice reduct of $\alg{A}$ will be denoted by $\ConLat \alg{A}$.

  We shall use $x \leq_{\theta} y$ for $\theta \in \ConLat \alg{A}$ as an abbreviation for $x / \theta \leq y / \theta$ in $\alg{A} / \theta$:
\begin{align*}
   x \leq_{\theta} y \iff \pair{x \vee y}{y} \in \theta \iff \pair{x}{x \wedge y} \in \theta \iff x / \theta \leq y / \theta \text{ in } \alg{A} / \theta.
\end{align*}
  Observe that for each left congruence $\theta$
\begin{align*}
   x \leq_{\theta} y \implies a \cdot x \leq_{\theta} a \cdot y \text{ and } a \bs x \leq_{\theta} a \bs y.
\end{align*}

\begin{fact} \label{fact: con}
  $\LCon \alg{A}$, $\RCon \alg{A}$, and $\Con \alg{A}$ are complete sublattices of $\ConLat \alg{A}$ for every RL $\alg{A}$. In particular, meets in these lattices are intersections and directed joins are directed unions. They are distributive algebraic lattices whose compact elements are the finitely generated, or equivalently the principal, (left or right) congruences. Moreover, $\Con \alg{A} = \LCon \alg{A} \cap \RCon \alg{A}$ for each RL $\alg{A}$.
\end{fact}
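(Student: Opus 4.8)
The plan is to realise each of the three lattices as the congruence lattice of a suitable algebra sharing the lattice reduct of $\alg{A}$, and then to read off the stated properties from general facts about congruence lattices, supplemented by two residuated-lattice computations. Write $\alg{A}^{\flat}$ for the expansion of the lattice reduct $\langle A; \wedge, \vee \rangle$ by all unary operations $\lambda_{a}(x) \assign a \cdot x$ and $\ell_{a}(x) \assign a \bs x$ for $a \in A$, and $\alg{A}^{\sharp}$ for the analogous expansion by the operations $x \mapsto x \cdot a$ and $x \mapsto x / a$. By the very definitions of left and right congruences, $\LCon \alg{A} = \Con \alg{A}^{\flat}$ and $\RCon \alg{A} = \Con \alg{A}^{\sharp}$, while $\Con \alg{A}$ is the congruence lattice of the RL itself. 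Each of these algebras has $\langle A; \wedge, \vee \rangle$ as a reduct and is therefore congruence-distributive (the lattice operations witness the J\'{o}nsson condition), so all three lattices are distributive; and congruence lattices are always algebraic with compact elements exactly the finitely generated congruences. This already yields distributivity, algebraicity, and the identification of the compact elements with the finitely generated congruences.

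To see that the three lattices are complete \emph{sublattices} of $\ConLat \alg{A}$, recall that in the congruence lattice of any algebra meets are intersections and an arbitrary join is the transitive closure of the union of the relations, while for a directed family the union is already transitive and hence is the join. Since intersections and directed unions of left (right, two-sided) congruences are again of the same kind, meets and directed joins computed in $\LCon \alg{A}$, $\RCon \alg{A}$, $\Con \alg{A}$ agree with those in $\ConLat \alg{A}$. For arbitrary joins I would check that the transitive closure of a union of, say, left congruences is again closed under every $\lambda_{a}$ and $\ell_{a}$: given a connecting $\theta$-chain for $\pair{x}{y}$, applying a unary translation termwise produces a connecting chain for its image, as each $\theta_{i}$ is closed under that translation; hence the $\ConLat \alg{A}$-join lands back in $\LCon \alg{A}$ and coincides with the join computed there. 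The same argument, now pushing a fixed factor through the chain, handles the binary operation $\cdot$ for $\Con \alg{A}$. This establishes the complete-sublattice claim together with the assertion that meets are intersections and directed joins directed unions.

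For the identity $\Con \alg{A} = \LCon \alg{A} \cap \RCon \alg{A}$ the inclusion $\subseteq$ is immediate. For $\supseteq$, let $\theta \in \LCon \alg{A} \cap \RCon \alg{A}$; it is a lattice congruence closed under both one-sided multiplications and both one-sided divisions, so $\theta$ is compatible with $\cdot$ as a binary operation (multiply on the right, then on the left, and use transitivity) and with $\bs, /$ in the numerator. What remains is compatibility of $\bs$ and $/$ in the \emph{denominator}. I would prove, for $p \equiv_{\theta} q$ and $w \assign p \bs x$, that $p \bs x \le_{\theta} q \bs x$: right-multiplying by $w$ gives $q w \equiv_{\theta} p w \le x$, hence $q w \wedge x \equiv_{\theta} q w$; applying $q \bs (-)$ together with the identity $q \bs (y \wedge z) = (q \bs y) \wedge (q \bs z)$ yields $(q \bs q w) \wedge (q \bs x) \equiv_{\theta} q \bs q w$, i.e.\ $q \bs q w \le_{\theta} q \bs x$, and since $w \le q \bs q w$ transitivity of $\le_{\theta}$ gives $w \le_{\theta} q \bs x$. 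By symmetry $p \bs x \equiv_{\theta} q \bs x$, the case of $/$ is dual, and so $\theta \in \Con \alg{A}$.

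It remains to show that every finitely generated congruence is principal, and this is the step I expect to be the main obstacle, since the analogous statement \emph{fails} for bare lattice congruences (e.g.\ in a chain $0 < a < b < c < d$ the congruence $\Cg(a,b) \vee \Cg(c,d)$ is compact but not principal); it genuinely uses the residuated structure. For left congruences I would set $e(a,b) \assign (a \bs b) \wedge (b \bs a) \wedge \one \le \one$ and show $\LCg^{\alg{A}} \pair{a}{b} = \LCg^{\alg{A}} \pair{e(a,b)}{\one}$: applying $a \bs (-)$ and $b \bs (-)$ to $a \equiv_{\theta} b$ and meeting with $\one$ shows $e(a,b) \equiv_{\theta} \one$ in any left congruence $\theta$ containing $\pair{a}{b}$, while conversely, from $e(a,b) \equiv_{\theta} \one$, left-multiplying by $a$ and by $b$ and using the convexity of $\theta$-classes recovers $a \equiv_{\theta} a \wedge b \equiv_{\theta} b$. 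A purely lattice-theoretic computation then gives $\LCg^{\alg{A}} \pair{e}{\one} \vee \LCg^{\alg{A}} \pair{f}{\one} = \LCg^{\alg{A}} \pair{e \wedge f}{\one}$ for $e, f \le \one$, so any finite join of principal left congruences is principal. The right case is dual (with $(a / b) \wedge (b / a) \wedge \one$), and the two-sided case uses the standard reduction $\Cg^{\alg{A}} \pair{a}{b} = \Cg^{\alg{A}} \pair{\lambda(a,b)}{\one}$ with $\lambda(a,b) = (a \bs b) \wedge (b \bs a) \wedge (a / b) \wedge (b / a) \wedge \one$; in each case ``finitely generated $=$ principal $=$ compact'' follows.
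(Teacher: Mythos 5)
Your proposal is correct, and on the two points where the paper actually does work --- the identity $\Con \alg{A} = \LCon \alg{A} \cap \RCon \alg{A}$ and the fact that finitely generated (left) congruences are principal --- you follow the same essential route, with minor variants. For the first point, the paper's computation is slightly slicker: from $x \leq_{\theta} y$ it gets $\one \leq x \bs x \leq_{\theta} x \bs y$ (left congruence) and then $y \bs a = \one \cdot (y \bs a) \leq_{\theta} (x \bs y)(y \bs a) \leq x \bs a$ (right congruence); your argument via $q \bs (qw \wedge x) = (q \bs qw) \wedge (q \bs x)$ and $w \leq q \bs qw$ is longer but equally valid. For the second point, the paper first normalizes a generating pair to $a \leq b$ and uses the single element $\one \wedge b \bs a$, whereas your symmetric generator $(a \bs b) \wedge (b \bs a) \wedge \one$ skips the normalization; both then conclude by the same lattice-theoretic observation that $\LCg^{\alg{A}}\pair{e}{\one} \vee \LCg^{\alg{A}}\pair{f}{\one} = \LCg^{\alg{A}}\pair{e \wedge f}{\one}$ for $e, f \leq \one$. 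Where you go beyond the paper is in proving the parts it dismisses as standard: your packaging of $\LCon \alg{A}$ and $\RCon \alg{A}$ as congruence lattices of unary expansions of the lattice reduct gives algebraicity, distributivity, and compactness of finitely generated members for free, and your transitive-closure-of-unions argument cleanly establishes the complete-sublattice claim. Your counterexample (a five-element chain) showing that ``finitely generated implies principal'' fails for bare lattice congruences is also a worthwhile observation the paper does not make, since it pinpoints exactly where the residuated structure is needed.
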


\begin{proof}
  We only prove that (i) if $\theta$ is both a left and a right congruence, then it is a congruence, and (ii) each finitely generated left congruence is principal.

  (i): the only non-trivial part of this claim is that $\pair{x}{y} \in \theta$ implies $\pair{x \bs a}{y \bs a} \in \theta$ and $\pair{a / x}{a / y} \in \theta$. We only prove that if $x \leq_{\theta} y$, then $y \bs a \leq_{\theta} x \bs a$. Indeed, if $x \leq_{\theta} y$, then $\one \leq x \bs x \leq_{\theta} x \bs y$, since $\theta$ is a left congruence. Thus $y \bs a = \one \cdot (y \bs a) \leq_{\theta} (x \bs y) \cdot (y \bs a) \leq x \bs a$, since $\theta$ is a right congruence.

  (ii): let $\LCg^{\alg{A}} X$ be the left congruence of $\alg{A}$ generated by a set $X \subseteq \alg{A}^{2}$. Observe that each principal left congruence of $\alg{A}$ has the form $\LCg^{\alg{A}} \pair{a}{b}$ for some $a \leq b$ in $\alg{A}$, since $\alg{A}$ has a lattice reduct. In that case $\LCg^{\alg{A}} \pair{a}{b} = \LCg^{\alg{A}} \pair{\one}{\one \wedge b \bs a}$: for each left congruence $\theta$ of $\alg{A}$, if $\pair{a}{b} \in \theta$, then $\pair{\one \wedge b \bs a}{\one} = \pair{\one \wedge b \bs a}{\one \wedge b \bs b} \in \theta$, and conversely if $\pair{\one}{\one \wedge b \bs a} \in \theta$, then $b = b \cdot \one \leq_{\theta} b \cdot (\one \wedge b \bs a) \leq b \cdot (b \bs a) \leq a$. Thus each principal left congruence of $\alg{A}$ has the form $\LCg^{\alg{A}} \pair{\one}{c}$ for some $c \leq \alg{A}$. But given $c_{1}, \dots, c_{n} \in \alg{A}_{-}$ we have $\LCg^{\alg{A}} (\pair{\one}{c_{1}}, \dots, \pair{\one}{c_{n}}) = \LCg^{\alg{A}} (\pair{\one}{c_{1} \wedge \dots \wedge c_{n}})$, proving that each finitely generated left congruence of $\alg{A}$ is in fact principal.
\end{proof}

  We shall generally work with left congruences. Analogous results for right congruences will immediately follow because the right congruences of a RL $\alg{A}$ are precisely the left congruences of the RL obtained from $\alg{A}$ by replacing the binary operations $x \cdot y$ and $x \bs y$ and $y / x$ by the operations $y \cdot x$ and $y / x$ and $x \bs y$.

\begin{fact} \label{fact: con isomorphism}
  Consider a RL $\alg{A}$ and $\theta \in \Con \alg{A}$. Then $\LCon \alg{A} / \theta \iso [\theta, \nabla_{\alg{A}}]_{\LCon \alg{A}}$ via the map $\phi \mapsto \phi / \theta$, where $[\theta, \nabla_{\alg{A}}]_{\LCon \alg{A}}$ is the upset generated by $\theta$ in $\LCon \alg{A}$.
\end{fact}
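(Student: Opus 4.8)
The statement to prove is Fact~\ref{fact: con isomorphism}: for a RL $\alg{A}$ and a congruence $\theta \in \Con \alg{A}$, the map $\phi \mapsto \phi / \theta$ is an isomorphism between $\LCon(\alg{A} / \theta)$ and the interval $[\theta, \nabla_{\alg{A}}]_{\LCon \alg{A}}$. This is an instance of the classical correspondence theorem, but specialized to left congruences rather than ordinary congruences, so the plan is to verify that the standard correspondence argument survives the restriction. I would first recall the underlying set-theoretic bijection. For a lattice congruence $\theta$ of the lattice reduct, the lattice congruences of $\alg{A} / \theta$ correspond bijectively to the lattice congruences of $\alg{A}$ containing $\theta$, via $\phi \mapsto \phi / \theta$ where $\pair{a / \theta}{b / \theta} \in \phi / \theta \iff \pair{a}{b} \in \phi$; its inverse sends a congruence $\psi$ of $\alg{A} / \theta$ to its preimage $q^{-1}[\psi]$ under the quotient map $q \colon \alg{A} \to \alg{A} / \theta$. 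This bijection is order-preserving in both directions, hence a lattice isomorphism between the interval $[\theta, \nabla_{\alg{A}}]$ in $\ConLat \alg{A}$ and all of $\ConLat(\alg{A} / \theta)$.

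The substance, then, is to check that this bijection \emph{restricts} to the left-congruence sublattices on both sides. Concretely I must show two things. First, if $\phi \in \LCon \alg{A}$ with $\theta \subseteq \phi$, then $\phi / \theta \in \LCon(\alg{A} / \theta)$; and second, conversely, if $\psi \in \LCon(\alg{A} / \theta)$, then its preimage $q^{-1}[\psi]$ is a left congruence of $\alg{A}$ containing $\theta$. Both are routine translations: the left-congruence condition is that $\pair{x}{y} \in \phi$ implies $\pair{a \cdot x}{a \cdot y} \in \phi$ and $\pair{a \bs x}{a \bs y} \in \phi$, and since $\theta$ is a full congruence the multiplication and left-division operations are well-defined on $\alg{A} / \theta$, so the condition on $\phi / \theta$ transfers verbatim from the condition on $\phi$, using that $q$ is a RL homomorphism. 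Here the hypothesis $\theta \in \Con \alg{A}$ (rather than merely a lattice congruence) is exactly what is needed, since it guarantees $\alg{A} / \theta$ is itself a residuated lattice on which the notion of left congruence makes sense.

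Finally I would assemble these pieces. The ambient correspondence $\phi \mapsto \phi / \theta$ is already known to be an order-isomorphism between $[\theta, \nabla_{\alg{A}}]_{\ConLat \alg{A}}$ and $\ConLat(\alg{A} / \theta)$; restricting the domain to those $\phi$ that additionally lie in $\LCon \alg{A}$, i.e.\ to $[\theta, \nabla_{\alg{A}}]_{\LCon \alg{A}}$ (using Fact~\ref{fact: con} that $\LCon \alg{A}$ is a complete sublattice, so this interval is computed the same way), the two checks above show the image is exactly $\LCon(\alg{A} / \theta)$ and the inverse lands back in the interval. Since order-isomorphisms between sublattices are lattice isomorphisms, and both restricted domains and codomains are the left-congruence lattices in question, we obtain $\LCon(\alg{A} / \theta) \iso [\theta, \nabla_{\alg{A}}]_{\LCon \alg{A}}$ as desired.

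I do not anticipate a genuine obstacle here: the result is a direct specialization of the correspondence theorem, and the only care required is to confirm that the left-congruence condition is preserved in both directions, which follows because $q$ is a surjective RL homomorphism and $\theta$ is a genuine (two-sided) congruence. The mildly delicate point worth stating explicitly is \emph{why} $\theta$ must be a full congruence and not merely a left congruence: the operations $\cdot$ and $\bs$ must descend to $\alg{A} / \theta$ for the left-congruence condition on the quotient to even be meaningful, and this requires $\theta$ to respect multiplication and both divisions on both sides.
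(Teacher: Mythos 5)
Your proposal is correct and follows essentially the same route as the paper: both start from the classical correspondence theorem for the lattice congruence lattices, $[\theta, \nabla_{\alg{A}}]_{\ConLat \alg{A}} \iso \ConLat(\alg{A}/\theta)$, and then verify that the left-congruence property transfers in both directions under $\phi \mapsto \phi/\theta$ (equivalently, under preimage along the quotient map), using that $\theta$ is a full congruence so that $\cdot$ and $\bs$ descend to $\alg{A}/\theta$. Your explicit remark on why $\theta$ must be two-sided matches the role this hypothesis plays in the paper's argument, so there is nothing to correct.
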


\begin{proof}
  Given a congruence $\theta$ of a RL $\alg{A}$, the map $\phi \mapsto \phi / \theta$ is known to be an isomorphism between $[\theta, \nabla_{\alg{A}}]_{\ConLat \alg{A}}$ and $\ConLat \alg{A} / \theta$. It suffices to show that $\phi$ is a left congruence of $\alg{A}$ if and only if $\phi / \theta$ is a left congruence of $\alg{A}$. If $\phi$ is a left congruence, then given $\pair{x / \theta}{y / \theta} \in \phi / \theta$ and $a / \theta \in \alg{A} / \theta$ we have $\pair{x}{y} \in \phi$, so $\pair{a \cdot x}{a \cdot y} \in \phi$ and $\pair{(a / \theta) \cdot (x / \theta)}{(a / \theta) \cdot (y / \theta)} \in \phi / \theta$, and likewise for left division. Conversely, given an equivalence relation $\phi$ on $\alg{A}$ containing $\theta$ such that $\phi / \theta$ is a left congruence of~$\alg{A} / \theta$, we need to show that $\phi$ is a left congruence of $\alg{A}$. Indeed, if $\pair{x}{y} \in \phi$, then $\pair{x / \theta}{y / \theta} \in \phi / \theta$, so $\pair{(a \cdot x) / \theta}{(a \cdot y) / \theta} \in \phi / \theta$ and $\pair{a \cdot x}{a \cdot y} \in \phi$, and likewise for left division.
\end{proof}

  The congruences of a RL $\alg{A}$ can be represented by subsets of $\alg{A}$~\cite[Theorem~3.47]{Glimpse}. The same holds for the left and right congruences. While congruences can be represented in a number of equivalent ways (by normal multiplicative $\one$-filters, convex normal subuniverses, or convex normal submonoids), for our purposes the simplest representation of left and right congruences is by multiplicative $\one$-filters.

\begin{definition}
  A $\one$-filter $F$ on a RL $\alg{A}$ is \emph{multiplicative} if
\begin{align*}
  x, y \in F \implies x \cdot y \in F.
\end{align*}
  A $\one$-filter on a residuated lattice is \emph{normal} if for all $x, y \in \alg{A}$
\begin{align*}
  x \bs y \in F \iff y / x \in F.
\end{align*}
\end{definition}

  The lattice of all multiplicative $\one$-filters of $\alg{A}$ ordered by inclusion will be denoted by $\Fi \alg{A}$, the lattice of all normal multiplicative $\one$-filters by $\NFi \alg{A}$. The multiplicative $\one$-filter generated by a set $X \subseteq \alg{A}$ will be denoted by $\Fg^{\alg{A}} X$.

  Each finitely generated multiplicative $\one$-filter is in fact principal, since
\begin{align*}
  \Fg^{\alg{A}} (a_{1}, \dots, a_{n}) = \Fg^{\alg{A}}(a_{1} \wedge \dots \wedge a_{n}).
\end{align*}
  Observe also that $\Fg^{\alg{A}}(a) = \Fg^{\alg{A}}(\one \wedge a)$.

  Each multiplicative $\one$-filter $F$ of a RL $\alg{A}$ determines the following relations $\LTheta(F)$ and $\RTheta(F)$, which we shall presently prove to be a left and a right congruence of~$\alg{A}$:
\begin{align*}
  \pair{x}{y} \in \LTheta(F) & \iff x \bs y, y \bs x \in F \iff x \cdot f \leq y \text{ and } y \cdot f \leq y \text{ for some } f \in F, \\
  \pair{x}{y} \in \RTheta(F) & \iff x / y, y / x \in F \iff f \cdot x \leq y \text{ and } f \cdot y \leq x \text{ for some } f \in F.
\end{align*}
  If $\alg{A}$ is a RL and $\LTheta(F) = \RTheta(F)$, we write $\CTheta(F) \assign \LTheta(F) = \RTheta(F)$.

  Conversely, each left or right congruence $\theta$ of a RL $\alg{A}$ determines the following set, which we shall presently prove to be a multiplicative $\one$-filter:
\begin{align*}
  \up_{\theta} \one \assign \set{a \in \alg{A}}{\one \leq_{\theta} a}.
\end{align*}

\begin{fact}
  The following are equivalent for each multiplicative $\one$-filter $F$ of a RL~$\alg{A}$:
\begin{enumerate}[(i)]
\item $F$ is normal.
\item $\LTheta(F) = \RTheta(F)$.
\item If $f \in F$, then $a \bs f a \in F$ and $a f / a \in F$ for each $a \in \alg{A}$.
\end{enumerate}
\end{fact}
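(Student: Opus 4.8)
The plan is to establish the cycle (i) $\Rightarrow$ (iii) $\Rightarrow$ (ii) $\Rightarrow$ (i), each step being a short computation from the residuation laws, the monotonicity of the operations in the appropriate coordinates, and the three properties of $F$ that I will use throughout: $\one \in F$, $F$ is an upset, and $F$ is closed under binary meets. I would freely switch between the two descriptions of $\LTheta(F)$ and $\RTheta(F)$ given above, since the residual description ($x \bs y, y \bs x \in F$) is convenient for (ii) $\Rightarrow$ (i) while the single-witness description (some $f \in F$ with $xf \leq y$ and $yf \leq x$) is convenient for (iii) $\Rightarrow$ (ii). A pleasant feature of this route is that it never invokes the (as yet unproved) fact that $\LTheta(F)$ and $\RTheta(F)$ are congruences, so the Fact is genuinely self-contained.

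For (i) $\Rightarrow$ (iii), fix $f \in F$ and $a \in \alg{A}$. From the trivial inequality $a \cdot f \leq af$ the residuation law gives $f \leq a \bs af$, so $a \bs af \in F$ since $F$ is an upset; normality (applied with the pair $a, af$) then yields $af/a \in F$. Symmetrically, $f \cdot a \leq fa$ gives $f \leq fa/a$, hence $fa/a \in F$, and normality (applied with the pair $a, fa$) yields $a \bs fa \in F$. This is the only place normality is used, and it is used in its cleanest form.

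For (iii) $\Rightarrow$ (ii) I would prove both inclusions by a conjugation argument. If $\pair{x}{y} \in \LTheta(F)$, say $xf \leq y$ and $yf \leq x$ for some $f \in F$, then (iii) gives $xf/x \in F$ and $yf/y \in F$, so $g \assign (xf/x) \wedge (yf/y) \in F$ satisfies $gx \leq (xf/x)\cdot x \leq xf \leq y$ and likewise $gy \leq x$, witnessing $\pair{x}{y} \in \RTheta(F)$; here I use $(z/x)\cdot x \leq z$. The reverse inclusion $\RTheta(F) \subseteq \LTheta(F)$ is the mirror image, now using $x \bs fx, y \bs fy \in F$ and the inequality $x \cdot (x \bs z) \leq z$. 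For (ii) $\Rightarrow$ (i), assume $\LTheta(F) = \RTheta(F)$ and $x \bs y \in F$; the key move is to feed the pair $\pair{x}{x \wedge y}$ into the hypothesis. Indeed $x \bs (x \wedge y) = (x \bs x) \wedge (x \bs y) \in F$ (both factors lie in $F$, the first because $x \bs x \geq \one$) and $(x \wedge y) \bs x \geq \one \in F$, so $\pair{x}{x \wedge y} \in \LTheta(F) = \RTheta(F)$; reading off the $\RTheta$-description gives $(x \wedge y)/x \in F$, and since $(x \wedge y)/x \leq y/x$ the upset property forces $y/x \in F$. The converse starts from $y/x \in F$ and extracts $x \bs (x \wedge y) \in F$ with $x \bs (x \wedge y) \leq x \bs y$.

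The main obstacle is organizational rather than conceptual: in each direction one must pick the right auxiliary element — the conjugate $xf/x$ (resp.\ $x \bs fx$) in (iii) $\Rightarrow$ (ii), and the meet-pair $\pair{x}{x \wedge y}$ in (ii) $\Rightarrow$ (i) — so that the available \emph{one-sided} residuation inequalities, which only give $(z/x)\cdot x \leq z$ and $x \cdot (x \bs z) \leq z$, push through in the correct direction. Once those choices are fixed, every remaining step is a direct application of residuation together with the fact that $F$ is a meet-closed upset containing $\one$.
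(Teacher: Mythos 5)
Your proof is correct, and it differs from the paper's in a way worth noting. The direction (i) $\Rightarrow$ (iii) is verbatim the paper's argument, but the rest of the structure diverges. The paper treats (i) as the hub: (i) $\Rightarrow$ (ii) is read off immediately from the residual descriptions of $\LTheta(F)$ and $\RTheta(F)$; (iii) $\Rightarrow$ (i) is a two-line conjugation computation ($x \bs y \in F$ gives $(x \cdot (x \bs y))/x \in F$, which lies below $y/x$); and (ii) $\Rightarrow$ (i) is proved by passing to the quotient: if $\LTheta(F) = \RTheta(F)$, this common relation is both a left and a right congruence, hence a congruence, and normality of $F$ is then read off from the order of $\alg{A}/\theta$. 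That last step quietly leans on material the paper only establishes later, namely that $\LTheta(F)$ and $\RTheta(F)$ are left and right congruences with $\up_{\theta}\one = F$ (Theorem~\ref{thm: theta iso}) and that $\Con \alg{A} = \LCon \alg{A} \cap \RCon \alg{A}$ (Fact~\ref{fact: con}). Your cycle (i) $\Rightarrow$ (iii) $\Rightarrow$ (ii) $\Rightarrow$ (i) avoids this entirely: the conjugation argument with the witness $g \assign (xf/x) \wedge (yf/y)$ for (iii) $\Rightarrow$ (ii), and the device of feeding the pair $\pair{x}{x \wedge y}$ into the equality for (ii) $\Rightarrow$ (i), use nothing beyond residuation, monotonicity, and the fact that $F$ is a meet-closed upset containing $\one$ --- so the Fact becomes self-contained, which is a genuine (if modest) improvement given where it sits in the paper. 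What you give up is brevity on the way from (iii) back to (i) (the paper's direct computation is shorter than your two-step detour) and some conceptual transparency in (ii) $\Rightarrow$ (i): once the congruence machinery is available, the paper's quotient argument exhibits normality as the statement that the two quotient orders coincide, and your $\pair{x}{x \wedge y}$ trick is precisely a hands-on shadow of that idea, since $x$ and $x \wedge y$ are $\theta$-related exactly when $x \leq_{\theta} y$.
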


\begin{proof}
  (i) $\Rightarrow$ (ii): trivial. (ii) $\Rightarrow$ (i): if $\LTheta(F) = \RTheta(F)$, then this equivalence relation $\theta$ is in fact a congruence, so
\begin{align*}
  x \bs y \in F & \iff \one / \theta \leq (x / \theta) \bs (y / \theta) \text{ in } \alg{A} / \theta \\
  & \iff x / \theta \leq y / \theta  \text{ in } \alg{A} / \theta \\
  & \iff \one / \theta \leq (y / \theta) / (x / \theta)  \text{ in } \alg{A} / \theta \\
  & \iff y / x \in F.
\end{align*}

  (i) $\Rightarrow$ (iii): if $f \in F$, then $a \bs a f \in F$ and $f a / a \in F$ because $f \leq a \bs a f$ and $f \leq f a / a$, so $a f / a, a \bs f a \in F$ by normality.

  (iii) $\Rightarrow$ (i): if $x \bs y \in F$, then $(x \cdot (x \bs y)) / x \in F$, but $(x \cdot (x \bs y)) / x \leq y / x$, so $y / x \in F$. Likewise, if $y / x \in F$, then $x \bs ((y / x) \cdot x) \in F$, but $x \bs ((y / x) \cdot x) \leq x \bs y$.
\end{proof}

\begin{theorem} \label{thm: theta iso}
  Let $\alg{A}$ be a RL. Then:
\begin{enumerate}[(i)]
\item $\LTheta\colon \Fi \alg{A} \to \LCon \alg{A}$ is an isomorphism with inverse $\theta \mapsto \up_{\theta} \one$.
\item $\RTheta\colon \Fi \alg{A} \to \RCon \alg{A}$ is an isomorphism with inverse $\theta \mapsto \up_{\theta} \one$.
\item $\CTheta\colon \NFi \alg{A} \to \Con \alg{A}$ is an isomorphism with inverse $\theta \mapsto \up_{\theta} \one$.
\end{enumerate}
\end{theorem}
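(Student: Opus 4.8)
The plan is to establish part~(i) directly and then reduce (ii) and (iii) to it. I will repeatedly use the two descriptions of $\LTheta(F)$ displayed above together with the recorded implication $x \leq_{\theta} y \Rightarrow a \cdot x \leq_{\theta} a \cdot y,\ a \bs x \leq_{\theta} a \bs y$ valid for every left congruence $\theta$. First note that the two descriptions of $\LTheta(F)$ agree: from $x \bs y, y \bs x \in F$ the element $f \assign (x \bs y) \wedge (y \bs x) \in F$ witnesses $x f \leq y$ and $y f \leq x$, and conversely $x f \leq y$ forces $f \leq x \bs y \in F$ by upward closure; since $\one \in F$ we may always shrink the witness to $f \wedge \one$ and thus assume $f \leq \one$.

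Next I would verify that $\LTheta(F)$ is a left congruence and that $\up_{\theta} \one$ is a multiplicative $\one$-filter. For $\LTheta(F)$: reflexivity holds since $\one \leq x \bs x \in F$, symmetry is built in, and transitivity follows from $(x \bs y)(y \bs z) \leq x \bs z$ and multiplicativity of $F$. Compatibility with meet and join uses a common witness $f \leq \one$: then $(x \wedge z) f \leq y \wedge z$ because $(x \wedge z) f \leq x f \leq y$ and $(x \wedge z) f \leq z f \leq z$, while $(x \vee z) f = x f \vee z f \leq y \vee z$ since right multiplication preserves joins. Left compatibility follows from the laws $x \bs y \leq (a x) \bs (a y)$ and $x \bs y \leq (a \bs x) \bs (a \bs y)$, whose right-hand sides lie in $F$ by upward closure. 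For $\up_{\theta} \one$: it is upward closed and meet-closed because $\leq_{\theta}$ is the order of $\alg{A} / \theta$, and it is multiplicative because $\one \leq_{\theta} a$ gives $a = a \cdot \one \leq_{\theta} a \cdot b$, so $\one \leq_{\theta} a \leq_{\theta} a b$ whenever $\one \leq_{\theta} b$.

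The conceptual heart is the identity, for a left congruence $\theta$ with $F \assign \up_{\theta} \one$,
\begin{align*}
  x \leq_{\theta} y \iff x \bs y \in F,
\end{align*}
whose forward direction uses $x \bs x \leq_{\theta} x \bs y$ with $\one \leq x \bs x$, and whose backward direction uses $x = x \cdot \one \leq_{\theta} x \cdot (x \bs y) \leq y$. Granting this, $\pair{x}{y} \in \LTheta(F)$ iff $x \leq_{\theta} y$ and $y \leq_{\theta} x$ iff $\pair{x}{y} \in \theta$, so $\LTheta(\up_{\theta} \one) = \theta$. For the other composite, $\one \bs z = z$ and $(\one \wedge a) \bs \one \geq \one$ reduce $\one \leq_{\LTheta(F)} a$ to $\one \wedge a \in F$, which is equivalent to $a \in F$; hence $\up_{\LTheta(F)} \one = F$. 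Both maps are plainly inclusion-preserving, so they form a pair of mutually inverse order isomorphisms, i.e.\ a lattice isomorphism, establishing~(i).

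Part~(ii) is obtained by applying (i) to the opposite residuated lattice (reverse the product, swap $\bs$ and $/$): its left congruences are exactly the right congruences of $\alg{A}$, its multiplicative $\one$-filters are the same sets as those of $\alg{A}$, and under this translation $\LTheta$ becomes $\RTheta$. For part~(iii) I would use $\Con \alg{A} = \LCon \alg{A} \cap \RCon \alg{A}$ together with the earlier characterization that $F$ is normal iff $\LTheta(F) = \RTheta(F)$: normality of $F$ makes the common value $\CTheta(F)$ a member of $\LCon \alg{A} \cap \RCon \alg{A} = \Con \alg{A}$, and every $\theta \in \Con \alg{A}$ yields a normal filter $\up_{\theta} \one$ because $\theta$, being both a left and a right congruence, is the image of the single filter $\up_{\theta} \one$ under both $\LTheta$ and $\RTheta$. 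Thus $\CTheta$ is the restriction of $\LTheta$ to $\NFi \alg{A}$ and is an order isomorphism onto $\Con \alg{A}$ with inverse $\theta \mapsto \up_{\theta} \one$. I expect the only real friction to be the arithmetic bookkeeping in the well-definedness checks---especially the reduction to a witness $f \leq \one$ in the lattice-compatibility argument and the two residuation inequalities for left compatibility---rather than any conceptual difficulty.
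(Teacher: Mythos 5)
Your proposal is correct and follows essentially the same route as the paper: a direct verification that $\LTheta$ and $\theta \mapsto \up_{\theta}\one$ are well-defined, order-preserving, and mutually inverse for part~(i), then passing to the opposite residuated lattice for part~(ii), and reducing part~(iii) to (i) and (ii) via $\Con \alg{A} = \LCon \alg{A} \cap \RCon \alg{A}$ together with the characterization of normal filters by $\LTheta(F) = \RTheta(F)$. The only differences are cosmetic bookkeeping choices (e.g.\ using the residuation laws $x \bs y \leq (a x) \bs (a y)$ and $x \bs y \leq (a \bs x) \bs (a \bs y)$ where the paper manipulates witnesses $f \in F$ directly), which do not change the structure of the argument.
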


\begin{proof}
  (ii): follows from (i) by the remark following Fact~\ref{fact: con}.

  (iii): this claim was proved in~\cite[Theorem~3.47]{Glimpse}. Alternatively, it follows from (i): if $F$ is normal, then $\LTheta(F) = \RTheta(F) \in \LCon \alg{A} \cap \RCon \alg{A} = \Con \alg{A}$, and conversely if $\theta$ is a congruence, then $\up_{\theta} \one$ is normal, since
\begin{align*}
  \one \leq_{\theta} x \bs y \iff x \leq_{\theta} y \iff \one \leq_{\theta} y / x.
\end{align*}

  (i): consider a left congruence $\theta$. Clearly $\up_{\theta} \one$ is a $\one$-filter. It is a multiplicative one, i.e.\ $\one \leq_{\theta} a, b$ implies $\one \leq_{\theta} a \cdot b$, because $\one \leq_{\theta} b$ implies $a = a \cdot \one \leq_{\theta} a \cdot b$, and then $\one \leq_{\theta} a$ implies $\one \leq_{\theta} a \leq_{\theta} a \cdot b$.

  Conversely, consider a multiplicative $\one$-filter $F$ on $\alg{A}$. Then the relation $\LTheta(F)$ is reflexive because $x \bs x \geq \one \in F$, symmetric by definition, and transitive because $(x \bs y) \cdot (y \bs z) \leq x \bs z$. It is a left congruence due to the following implications:
\begin{align*}
  x \cdot f \leq x' ~ \& ~ y \cdot f \leq y' & \implies (x \wedge y) \cdot f \leq x' \wedge y' \text{ and } (x \vee y) \cdot f \leq x' \vee y', \\
  x \cdot f \leq x' & \implies (a \cdot x) \cdot f \leq a \cdot x' \text{ and } (a \bs x) \cdot f \leq a \bs x'.
\end{align*}
  If $\theta$ is a left congruence, then $\up_{\theta} \one$ is clearly a $\one$-filter. Moreover, it is a multi\-plicative one: if $x, y \in \up_{\theta} \one$, then $\one \leq_{\theta} x$ and $\one \leq_{\theta} y$, so $\one \leq_{\theta} x = x \cdot \one \leq_{\theta} x \cdot y$ and $x \cdot y \in \up_{\theta} \one$.

  The two maps are clearly order-preserving. It remains to prove that they are mutually inverse bijections:
\begin{align*}
  \pair{x}{y} \in \LTheta(\up_{\theta} \one) & \iff x \cdot f \leq y \text{ and } y \cdot f \leq x \text{ for some } f \in \up_{\theta} \one \\
  & \iff x \leq_{\theta} y \text{ and } y \leq_{\theta} x \\
  & \iff \pair{x}{y} \in \theta.
\end{align*}
  and
\begin{align*}
  a \in \up_{\LTheta(F)} \one & \iff \one \leq_{\LTheta(F)} a \\
  & \iff \one \cdot f \leq a \text{ for some } f \in F \\
  & \iff a \in F. \qedhere
\end{align*}
\end{proof}

\begin{corollary} \label{cor: lcon rcon}
  $\LCon \alg{A} \iso \RCon \alg{A}$ for each RL $\alg{A}$ via an isomorphism which restricts to the identity on $\Con \alg{A}$.
\end{corollary}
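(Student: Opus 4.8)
The plan is to build the required isomorphism directly out of the three isomorphisms supplied by Theorem~\ref{thm: theta iso}, using the common domain $\Fi \alg{A}$ as an intermediary. Since $\LTheta\colon \Fi \alg{A} \to \LCon \alg{A}$ and $\RTheta\colon \Fi \alg{A} \to \RCon \alg{A}$ are both lattice isomorphisms, the composite $\RTheta \circ \LTheta^{-1}\colon \LCon \alg{A} \to \RCon \alg{A}$ is a lattice isomorphism. Concretely, recalling that $\LTheta^{-1}$ sends a left congruence $\theta$ to the multiplicative $\one$-filter $\up_{\theta} \one$, this isomorphism acts by $\theta \mapsto \RTheta(\up_{\theta} \one)$.

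It then remains to verify that this map fixes every $\theta \in \Con \alg{A}$. By Fact~\ref{fact: con} we have $\Con \alg{A} = \LCon \alg{A} \cap \RCon \alg{A}$, so every congruence is in particular a left congruence whose image under $\LTheta^{-1}$ is the filter $\up_{\theta} \one$. The key point, already recorded in the proof of part~(iii) of Theorem~\ref{thm: theta iso}, is that when $\theta$ is a congruence this filter is \emph{normal}, since $\one \leq_{\theta} x \bs y \iff x \leq_{\theta} y \iff \one \leq_{\theta} y / x$. For a normal multiplicative $\one$-filter $F$ one has $\LTheta(F) = \RTheta(F)$, and hence
\begin{align*}
  (\RTheta \circ \LTheta^{-1})(\theta) = \RTheta(\up_{\theta} \one) = \LTheta(\up_{\theta} \one) = \theta,
\end{align*}
where the final equality is just the fact that $\LTheta$ and $\theta \mapsto \up_{\theta} \one$ are mutually inverse. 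Thus the isomorphism restricts to the identity on $\Con \alg{A}$, as required.

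There is essentially no obstacle here: the corollary is a formal consequence of Theorem~\ref{thm: theta iso} together with the observation that the filter representing a two-sided congruence is normal, which is precisely the condition that makes $\LTheta$ and $\RTheta$ agree on it. The only thing to be careful about is to phrase the isomorphism through the shared filter lattice $\Fi \alg{A}$ rather than attempting to match left and right congruences directly; once this is done, both the bijectivity and the behaviour on $\Con \alg{A}$ are immediate.
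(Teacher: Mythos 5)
Your proof is correct and is exactly the argument the paper intends: the corollary is stated without proof as an immediate consequence of Theorem~\ref{thm: theta iso}, namely the composite $\RTheta \circ \LTheta^{-1}$, which fixes each $\theta \in \Con \alg{A}$ because the filter $\up_{\theta} \one$ of a two-sided congruence is normal and hence $\LTheta$ and $\RTheta$ agree on it. Routing the identification through the shared filter lattice $\Fi \alg{A}$ is precisely the intended mechanism, so there is nothing to add.
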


\begin{corollary} \label{cor: fi a algebraic}
  $\Fi \alg{A}$ is a distributive algebraic lattice for each RL $\alg{A}$.
\end{corollary}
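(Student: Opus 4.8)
The plan is to deduce this directly from the isomorphism established in Theorem~\ref{thm: theta iso}. By part~(i) of that theorem, the map $\LTheta$ is a lattice isomorphism $\Fi \alg{A} \iso \LCon \alg{A}$, with inverse $\theta \mapsto \up_{\theta} \one$. Being a \emph{distributive algebraic lattice} is a property of the abstract lattice structure alone, hence invariant under isomorphism, so it suffices to transport this property from $\LCon \alg{A}$ to $\Fi \alg{A}$ along $\LTheta$.

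This is exactly what Fact~\ref{fact: con} supplies: it records that $\LCon \alg{A}$ is a complete sublattice of $\ConLat \alg{A}$ which is distributive and algebraic, and whose compact elements are precisely the principal (equivalently, finitely generated) left congruences. Distributivity is a first-order lattice property and so passes through $\LTheta$ without comment. Algebraicity likewise transfers: an order isomorphism between complete lattices preserves arbitrary joins and sends compact elements to compact elements, so $\Fi \alg{A}$ is complete and each of its elements is a join of compact elements. Tracing the correspondence through $\LTheta^{-1}$, the compact elements of $\Fi \alg{A}$ come out to be the images of the principal left congruences, which by the remarks preceding Theorem~\ref{thm: theta iso} (namely $\Fg^{\alg{A}}(a_{1}, \dots, a_{n}) = \Fg^{\alg{A}}(a_{1} \wedge \dots \wedge a_{n})$) are exactly the principal, equivalently the finitely generated, multiplicative $\one$-filters.

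I expect essentially no obstacle here, since the corollary is a formal consequence of an isomorphism already in hand together with a structural fact about $\LCon \alg{A}$ already proved. The only point meriting a sentence is that \emph{algebraic} is an isomorphism-invariant notion, which is immediate from preservation of joins and of compactness. One could alternatively give a self-contained argument—identifying the principal multiplicative $\one$-filters $\Fg^{\alg{A}}(a)$ as the compact elements of $\Fi \alg{A}$ and checking the distributive law directly on multiplicative $\one$-filters—but invoking Theorem~\ref{thm: theta iso} renders such a computation redundant.
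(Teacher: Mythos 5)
Your proposal is correct and matches the paper's intended argument exactly: the paper states this as an immediate corollary of Theorem~\ref{thm: theta iso}(i), transporting the distributive algebraic lattice structure of $\LCon \alg{A}$ (recorded in Fact~\ref{fact: con}) across the isomorphism $\LTheta$, which is precisely what you do. Your additional remarks on the isomorphism-invariance of algebraicity and the identification of compact elements with principal multiplicative $\one$-filters are accurate and consistent with the surrounding text.
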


  The following two lemmas are known facts about multiplicative $\one$-filters.

\begin{lemma} \label{lemma: fg}
  Let $\alg{A}$ be a RL. Then for all $a, b \in \alg{A}$
\begin{align*}
  b \in \Fg^{\alg{A}}(a) \iff (\one \wedge a)^{k} \leq b \text{ for some } k \in \omega.
\end{align*}
\end{lemma}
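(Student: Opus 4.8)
The plan is to identify $\Fg^{\alg{A}}(a)$ explicitly with the set $F := \set{b \in \alg{A}}{(\one \wedge a)^{k} \leq b \text{ for some } k \in \omega}$ by showing that $F$ is the \emph{least} multiplicative $\one$-filter containing $a$. First I would set $c := \one \wedge a$ and invoke the already-noted identity $\Fg^{\alg{A}}(a) = \Fg^{\alg{A}}(\one \wedge a)$, so that it suffices to compute the multiplicative $\one$-filter generated by the single element $c \leq \one$. The crucial elementary observation is that, since $c \leq \one$ and multiplication is order-preserving, the powers of $c$ form a descending chain $c^{0} = \one \geq c \geq c^{2} \geq \dots$; this is what lets the exponents be combined cleanly.

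Next I would verify that $F$ is a multiplicative $\one$-filter containing $a$. It is an upset by construction, it contains $\one$ (take $k = 0$, or note $c \leq \one$), and it contains $a$ (since $c = \one \wedge a \leq a$). Closure under meets is the one step that uses the descending-chain observation: if $c^{k} \leq b$ and $c^{l} \leq b'$, then $c^{m} \leq b \wedge b'$ for $m := \max(k, l)$. Closure under multiplication is immediate from $c^{k} \cdot c^{l} = c^{k + l} \leq b \cdot b'$. Since $F$ is thus a multiplicative $\one$-filter containing $a$, we obtain $\Fg^{\alg{A}}(a) = \Fg^{\alg{A}}(c) \subseteq F$.

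For the reverse inclusion I would show that every multiplicative $\one$-filter $G$ containing $a$ already contains all of $F$. Indeed, since $G$ is a $\one$-filter containing both $\one$ and $a$ and closed under meets, it contains $c = \one \wedge a$; closure under multiplication then gives $c^{k} \in G$ for every $k \in \omega$, and upward closure gives $b \in G$ whenever $c^{k} \leq b$. Hence $F \subseteq G$, and in particular $F \subseteq \Fg^{\alg{A}}(a)$. Combining the two inclusions yields the claimed equality. I do not expect any genuine obstacle here; the only point requiring a moment's care is the meet-closure of $F$, which is precisely where the inequality $c \leq \one$ (and the resulting monotone decrease of the powers $c^{k}$) is essential.
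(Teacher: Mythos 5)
Your proof is correct and complete. The paper itself states this lemma without proof (it is labelled a ``known fact'' about multiplicative $\one$-filters), and your argument is exactly the standard one being relied upon: the set $F$ of elements lying above some power of $c = \one \wedge a$ is itself a multiplicative $\one$-filter containing $a$ --- with the meet-closure step correctly exploiting the descending chain $c^{0} = \one \geq c \geq c^{2} \geq \dots$ that follows from $c \leq \one$ and monotonicity of multiplication --- and $F$ is contained in every multiplicative $\one$-filter containing $a$, hence equals $\Fg^{\alg{A}}(a)$.
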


\begin{lemma} \label{lemma: fg a vee b}
  Let $\alg{A}$ be a RL. Then for all $a, b \in \alg{A}$
\begin{align*}
  \Fg^{\alg{A}}(a) \cap \Fg^{\alg{A}}(b) & = \Fg^{\alg{A}}((\one \wedge a) \vee (\one \wedge b)), \\ \Fg^{\alg{A}}(a) \vee \Fg^{\alg{A}}(b) & = \Fg^{\alg{A}}(a, b) = \Fg^{\alg{A}}(a \wedge b).
\end{align*}
\end{lemma}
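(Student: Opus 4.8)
The plan is to treat the two displayed lines separately, deriving the first from Lemma~\ref{lemma: fg} and the second from the already-recorded fact that every finitely generated multiplicative $\one$-filter is principal.

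The second line is essentially bookkeeping. Since $\Fg^{\alg{A}}(a)$ and $\Fg^{\alg{A}}(b)$ are both contained in the multiplicative $\one$-filter $\Fg^{\alg{A}}(a,b)$, their join in $\Fi \alg{A}$ is contained in $\Fg^{\alg{A}}(a,b)$; conversely $a, b \in \Fg^{\alg{A}}(a) \vee \Fg^{\alg{A}}(b)$, yielding the reverse inclusion, so $\Fg^{\alg{A}}(a) \vee \Fg^{\alg{A}}(b) = \Fg^{\alg{A}}(a,b)$. The identity $\Fg^{\alg{A}}(a,b) = \Fg^{\alg{A}}(a \wedge b)$ is the instance $n = 2$ of the equation $\Fg^{\alg{A}}(a_{1}, \dots, a_{n}) = \Fg^{\alg{A}}(a_{1} \wedge \dots \wedge a_{n})$ noted above.

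For the first line I would argue at the level of membership via Lemma~\ref{lemma: fg}. Writing $p \assign \one \wedge a$ and $q \assign \one \wedge b$, so that $p, q \leq \one$, an element $c$ lies in $\Fg^{\alg{A}}(a) \cap \Fg^{\alg{A}}(b)$ if and only if $p^{k} \leq c$ and $q^{l} \leq c$ for some $k, l \in \omega$, whereas $c \in \Fg^{\alg{A}}((\one \wedge a) \vee (\one \wedge b)) = \Fg^{\alg{A}}(p \vee q)$ if and only if $(p \vee q)^{m} \leq c$ for some $m \in \omega$ (here I use that $\one \wedge (p \vee q) = p \vee q$, since $p \vee q \leq \one$). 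So it suffices to prove these two conditions on $c$ equivalent. The direction from $(p \vee q)^{m} \leq c$ to the pair of inequalities is immediate: from $p \leq p \vee q$ and $q \leq p \vee q$ together with monotonicity of multiplication we get $p^{m} \leq (p \vee q)^{m} \leq c$ and $q^{m} \leq (p \vee q)^{m} \leq c$.

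The main obstacle is the converse direction, which I would handle by a word-counting argument. Expanding $(p \vee q)^{m}$ by distributivity of multiplication over finite joins (valid in any RL, since multiplication is residuated and hence preserves joins in each coordinate) writes it as the join of all products $x_{1} \cdots x_{m}$ with each $x_{i} \in \{ p, q \}$. Taking $m \assign k + l$, every such word contains at least $k$ factors equal to $p$ or at least $l$ factors equal to $q$. If a word $w$ has at least $k$ occurrences of $p$, then replacing every occurrence of $q$ in $w$ by $\one$ only increases the product (as $q \leq \one$), so $w \leq p^{j}$, where $j \geq k$ is the number of occurrences of $p$; and $p^{j} = p^{j-k} \cdot p^{k} \leq p^{k} \leq c$, because $p^{j-k} \leq \one$ (every power of an element below $\one$ is below $\one$). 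Symmetrically, a word with at least $l$ occurrences of $q$ lies below $q^{l} \leq c$. Hence every word in the expansion is below $c$, so $(p \vee q)^{k+l} \leq c$, which establishes the equivalence and completes the first line.
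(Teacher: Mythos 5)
Your proof is correct and takes essentially the same route as the paper's: both establish the hard inclusion by expanding a power of $(\one \wedge a) \vee (\one \wedge b)$ via distributivity of multiplication over finite joins and then arguing by pigeonhole that every resulting word lies below $(\one \wedge a)^{k}$ or below a suitable power of $\one \wedge b$, hence below $c$. The only cosmetic differences are that you use the exponent $k+l$ where the paper uses $2\max(k,l)$, and you spell out the replace-each-$q$-by-$\one$ step that the paper leaves implicit.
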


\begin{proof}
  The second equality is immediate, and
\begin{align*}
  \Fg^{\alg{A}}((\one \wedge a) \vee (\one \wedge b)) \subseteq \Fg^{\alg{A}}(\one \wedge a) \cap \Fg^{\alg{A}}(\one \wedge b) = \Fg(a) \cap \Fg(b).
\end{align*}
  Conversely, take $c \in \Fg(a) \cap \Fg(b)$. By Lemma~\ref{lemma: fg} there are $m, n \in \omega$ such that $(\one \wedge a)^{m} \leq c$ and $(\one \wedge a)^{n} \leq c$. Taking $k \assign \max(m, n)$, we have $(\one \wedge a)^{k} \vee (\one \wedge b)^{k} \leq c$. If we distribute products over joins, $((\one \wedge a) \vee (\one \wedge b))^{2k}$ is a join of products of $2k$ factors where $i \in \{ 0, \dots, 2k \}$ of the factors are equal to $\one \wedge a$ and $2k-i$ of the factors are equal to $\one \wedge b$. Each such product lies either below $(\one \wedge a)^{k}$ or below $(\one \wedge b)^{k}$. Thus $((\one \wedge a) \vee (\one \wedge b))^{2k}  \leq (\one \wedge a)^{k} \vee (\one \wedge b)^{k} \leq c$ and $c \in \Fg((\one \wedge a) \vee (\one \wedge b))$.
\end{proof}

\begin{corollary}
  The compact elements of $\Fi \alg{A}$ form a sublattice of $\Fi \alg{A}$. Consequently, the same holds for $\LCon \alg{A}$ and $\RCon \alg{A}$.
\end{corollary}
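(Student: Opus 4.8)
The plan is to identify the compact elements of $\Fi \alg{A}$ explicitly and then read off closure under meet and join directly from Lemma~\ref{lemma: fg a vee b}. First I would recall that $\Fi \alg{A}$ is algebraic by Corollary~\ref{cor: fi a algebraic}, so its compact elements are exactly the finitely generated multiplicative $\one$-filters; and since $\Fg^{\alg{A}}(a_{1}, \dots, a_{n}) = \Fg^{\alg{A}}(a_{1} \wedge \dots \wedge a_{n})$, every finitely generated multiplicative $\one$-filter is in fact principal. Thus the set of compact elements of $\Fi \alg{A}$ is precisely $\set{\Fg^{\alg{A}}(a)}{a \in \alg{A}}$.

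To show this set is a sublattice, I would verify that it is closed under the meet and join of $\Fi \alg{A}$, which is exactly the content of Lemma~\ref{lemma: fg a vee b}: for $a, b \in \alg{A}$ the join $\Fg^{\alg{A}}(a) \vee \Fg^{\alg{A}}(b)$ equals $\Fg^{\alg{A}}(a \wedge b)$ and the meet $\Fg^{\alg{A}}(a) \cap \Fg^{\alg{A}}(b)$ equals $\Fg^{\alg{A}}((\one \wedge a) \vee (\one \wedge b))$, both principal and hence compact. The closure under joins is in fact automatic, since the join of two compact elements is compact in any lattice; the substantive point is that the meet of two compact filters remains compact, which can fail in a general algebraic lattice and is supplied here precisely by the meet formula of Lemma~\ref{lemma: fg a vee b}.

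For the consequence, I would transport the result along the isomorphisms of Theorem~\ref{thm: theta iso}. Since $\LTheta\colon \Fi \alg{A} \to \LCon \alg{A}$ and $\RTheta\colon \Fi \alg{A} \to \RCon \alg{A}$ are lattice isomorphisms, and both compactness and the property of being a sublattice are preserved under isomorphism, the images of the compact elements of $\Fi \alg{A}$ are exactly the compact elements of $\LCon \alg{A}$ and of $\RCon \alg{A}$, and they likewise form sublattices there. I do not expect any genuine obstacle here: the argument is an assembly of earlier results, with the only essential ingredients being the identification of the compact elements with the principal filters and the meet formula of Lemma~\ref{lemma: fg a vee b}, which together ensure that the meet of two compact filters stays compact.
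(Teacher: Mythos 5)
Your proposal is correct and matches the paper's intended argument: the paper states this corollary without proof immediately after Lemma~\ref{lemma: fg a vee b}, precisely because the identification of compact elements with principal filters together with the meet and join formulas of that lemma, and transport along the isomorphisms of Theorem~\ref{thm: theta iso}, give the result. Your observation that join-closure is automatic and only the meet formula is the substantive ingredient is a nice touch, but there is nothing to correct.
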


\begin{lemma} \label{lemma: theta f irreducible}
  Let $\alg{A}$ be a RL and $F$ be a meet irreducible element of $\Fi \alg{A}$. Then $\alg{A} / \LTheta(F)$ is an irreducible-pointed lattice.
\end{lemma}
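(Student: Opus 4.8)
The plan is to route everything through the lattice isomorphism $\LTheta\colon \Fi \alg{A} \to \LCon \alg{A}$ of Theorem~\ref{thm: theta iso} and to argue by contraposition: I will show that if $\one$ fails to be join irreducible in the quotient lattice $\alg{A} / \LTheta(F)$, then $F$ fails to be meet irreducible in $\Fi \alg{A}$. Throughout I write $\theta \assign \LTheta(F)$ and use that the inverse isomorphism sends $\theta$ back to $\up_{\theta} \one = F$; in particular, for any $x \leq \one$ we have $x \equiv_{\theta} \one$ if and only if $x \in F$ (since $x \leq \one$ already gives $x \leq_{\theta} \one$, so only $\one \leq_{\theta} x$, i.e.\ $x \in \up_{\theta} \one$, remains to be imposed).

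So suppose $\one / \theta = (a \vee b) / \theta$ with $a / \theta \neq \one / \theta \neq b / \theta$. First I would reduce to the case $a, b \leq \one$: since $a / \theta, b / \theta \leq \one / \theta$, replacing $a$ by $a \wedge \one$ and $b$ by $b \wedge \one$ alters neither congruence class nor the hypotheses. Translating through the observation above then yields $a \vee b \in F$ while $a \notin F$ and $b \notin F$.

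Now I would set $F_{a} \assign F \vee \Fg^{\alg{A}}(a)$ and $F_{b} \assign F \vee \Fg^{\alg{A}}(b)$ in $\Fi \alg{A}$. Both strictly contain $F$, since $a \in F_{a} \setminus F$ and $b \in F_{b} \setminus F$. The key computation combines the distributivity of $\Fi \alg{A}$ (Corollary~\ref{cor: fi a algebraic}) with Lemma~\ref{lemma: fg a vee b}: because $a, b \leq \one$ we have $\one \wedge a = a$ and $\one \wedge b = b$, so
\begin{align*}
  F_{a} \cap F_{b} = F \vee (\Fg^{\alg{A}}(a) \cap \Fg^{\alg{A}}(b)) = F \vee \Fg^{\alg{A}}(a \vee b) = F,
\end{align*}
the last equality holding since $a \vee b \in F$ forces $\Fg^{\alg{A}}(a \vee b) \subseteq F$. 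Thus $F = F_{a} \cap F_{b}$ with $F \subsetneq F_{a}$ and $F \subsetneq F_{b}$, contradicting the meet irreducibility of $F$ and completing the contrapositive.

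The only delicate point is the passage between the order-theoretic statement about $\one$ in the quotient and the filter-theoretic statement, i.e.\ the identity $\Fg^{\alg{A}}(a) \cap \Fg^{\alg{A}}(b) = \Fg^{\alg{A}}(a \vee b)$ for $a, b \leq \one$; once Lemma~\ref{lemma: fg a vee b} and the distributivity of $\Fi \alg{A}$ are invoked, the rest is bookkeeping through the isomorphism $\LTheta$.
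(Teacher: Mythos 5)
Your proof is correct and takes essentially the same route as the paper's: both reduce to representatives $a, b \leq \one$, identify membership in $F$ with collapsing to $\one/\theta$ via the correspondence $\up_{\theta}\one = F$ of Theorem~\ref{thm: theta iso}, and then combine the distributivity of $\Fi \alg{A}$ (Corollary~\ref{cor: fi a algebraic}) with the identity $\Fg^{\alg{A}}(a) \cap \Fg^{\alg{A}}(b) = \Fg^{\alg{A}}(a \vee b)$ from Lemma~\ref{lemma: fg a vee b}. The only difference is stylistic: the paper argues directly by noting that meet irreducibility plus distributivity makes $F$ meet prime, whereas you inline exactly that step as a contrapositive construction of the decomposition $F = F_{a} \cap F_{b}$.
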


\begin{proof}
  Let $\theta \assign \LTheta(F)$ and consider $a / \theta, b / \theta \leq \one / \theta$ in $\alg{A} / \theta$. We may choose $a, b \leq \one$, since $x / \theta \leq \one / \theta$ implies that $x \cdot f \leq \one$ for some $f \in F$, in which case $x / \theta = y / \theta$ and $y \leq \one$ for $y \assign x \cdot (\one \wedge f)$. Observe that
\begin{align*}
  a \in F \iff a / \theta = \one / \theta.
\end{align*}
  Because $\Fi \alg{A}$ is distributive by Corollary~\ref{cor: fi a algebraic}, $F$ is a meet prime element of $\Fi \alg{A}$. Therefore, using Lemma~\ref{lemma: fg a vee b} in the third equivalence,
\begin{align*}
  (a / \theta) \vee (b / \theta) = \one / \theta & \iff (a \vee b) / \theta = \one / \theta \\
  & \iff a \vee b \in F \\
  & \iff \Fg^{\alg{A}}(a) \cap \Fg^{\alg{A}}(b) = \Fg^{\alg{A}}(a \vee b) \subseteq F \\
  & \iff \Fg^{\alg{A}}(a) \subseteq F \text{ or } \Fg^{\alg{A}}(b) \subseteq F \\
  & \iff a \in F \text{ or } b \in F \\
  & \iff a / \theta = \one \text{ or } b / \theta = \one. \qedhere
\end{align*}
\end{proof}

\begin{fact}
  Each RL is semi-irreducible-pointed as a pointed lattice. Consequently, each RL which satisfies $\one \wedge (x \vee y) \equals (\one \wedge x) \vee (\one \wedge y)$ is semi-prime-pointed.
\end{fact}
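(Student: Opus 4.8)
The plan is to realize the pointed lattice reduct of an arbitrary RL $\alg{A}$ as a subdirect product of irreducible-pointed lattices, using the theory of multiplicative $\one$-filters developed above. The three tools that do all the work are the isomorphism $\LTheta\colon \Fi \alg{A} \to \LCon \alg{A}$ of Theorem~\ref{thm: theta iso}, the fact that $\Fi \alg{A}$ is a distributive algebraic lattice (Corollary~\ref{cor: fi a algebraic}), and Lemma~\ref{lemma: theta f irreducible}, which turns a meet irreducible filter into an irreducible-pointed quotient. So the task reduces to locating enough meet irreducible filters below which the identity left congruence sits.

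First I would identify the least element of $\Fi \alg{A}$. The set $\up \one$ is a multiplicative $\one$-filter (it is an upset closed under meets, and closed under products since $a, b \geq \one$ forces $a \cdot b \geq \one$ by monotonicity), and it is contained in every $\one$-filter, so it is the bottom of $\Fi \alg{A}$. Directly from the definition of $\LTheta$ one computes $\LTheta(\up \one) = \idcon_{\alg{A}}$: indeed $\pair{x}{y} \in \LTheta(\up \one)$ holds iff $\one \leq x \bs y$ and $\one \leq y \bs x$, that is iff $x \leq y$ and $y \leq x$. Next I would invoke algebraicity. In an algebraic lattice every element is the meet of the completely meet irreducible elements lying above it; applied to the bottom element of $\Fi \alg{A}$ this yields a family $(F_{i})_{i \in I}$ of (completely, hence in particular finitely) meet irreducible filters with $\bigcap_{i \in I} F_{i} = \up \one$. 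Since $\LTheta$ is an isomorphism of complete lattices and meets in $\LCon \alg{A}$ are intersections (Fact~\ref{fact: con}), applying $\LTheta$ gives $\bigcap_{i \in I} \LTheta(F_{i}) = \LTheta(\up \one) = \idcon_{\alg{A}}$. By Lemma~\ref{lemma: theta f irreducible} each $\alg{A} / \LTheta(F_{i})$ is irreducible-pointed, and the triviality of the intersection of the left congruences $\LTheta(F_{i})$ says exactly that the pointed lattice reduct of $\alg{A}$ embeds subdirectly into $\prod_{i \in I} \alg{A} / \LTheta(F_{i})$. Hence $\alg{A}$ is semi-irreducible-pointed. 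The second assertion is then immediate: if $\alg{A} \vDash \one \wedge (x \vee y) \equals (\one \wedge x) \vee (\one \wedge y)$, the equivalence (i) $\Leftrightarrow$ (iv) of Fact~\ref{fact: semi-prime-pointed} identifies semi-irreducible-pointedness with semi-prime-pointedness.

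The main obstacle is the appeal to the existence of sufficiently many meet irreducibles in $\Fi \alg{A}$, i.e.\ that the bottom element of an algebraic lattice is an intersection of completely meet irreducible elements. This is the only genuinely non-bookkeeping ingredient, and it rests on the standard Zorn's lemma argument for algebraic lattices: given a nonzero compact filter $C$, a filter maximal among those omitting $C$ is completely meet irreducible, and because $\Fi \alg{A}$ is generated under joins by its compact (equivalently, finitely generated) elements, $\up \one$ is the intersection of all such maximal filters. Everything else, namely the computation of $\LTheta(\up \one)$ and the transfer of the meet through the isomorphism $\LTheta$, is routine. It is worth emphasizing that the quotients $\alg{A} / \LTheta(F_{i})$ need only be pointed lattices, not RLs, since $\LTheta(F_{i})$ is merely a left congruence; this is exactly the point at which passing from congruences to left congruences lets the argument cover non-commutative RLs, whereas the commutative case could rely solely on ordinary congruences.
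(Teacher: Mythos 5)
Your proof is correct and follows essentially the same route as the paper's: both decompose $\up\one$ into meet irreducible elements of $\Fi\alg{A}$ using algebraicity (Corollary~\ref{cor: fi a algebraic}), transfer this through the isomorphism $\LTheta$ of Theorem~\ref{thm: theta iso} to obtain a subdirect representation of the pointed lattice reduct by the quotients $\alg{A}/\LTheta(F)$, invoke Lemma~\ref{lemma: theta f irreducible} to see these quotients are irreducible-pointed, and finish with Fact~\ref{fact: semi-prime-pointed}. Your write-up merely makes explicit some steps the paper leaves implicit, such as the computation $\LTheta(\up\one)=\idcon_{\alg{A}}$ and the Zorn's lemma argument behind the meet irreducible decomposition.
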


\begin{proof}
  Because the lattice $\Fi \alg{A}$ is algebraic, the multiplicative $\one$-filter $\up \one$ is an intersection of meet irreducible elements of $\Fi \alg{A}$. By Theorem~\ref{thm: theta iso}, taking into account the fact that each left congruence is in particular a lattice congruence, the pointed lattice reduct of $\alg{A}$ is isomorphic to a subdirect product of pointed lattices $\alg{A} / \LTheta(F)$ for $F$ meet irreducible in $\Fi \alg{A}$. The first claim now follows from Lemma~\ref{lemma: theta f irreducible}. The second claim follows from Fact~\ref{fact: semi-prime-pointed}.
\end{proof}

  The following definitions can equivalently be stated in terms of either left or congruences by Corollary~\ref{cor: lcon rcon}.

\begin{definition}
  A RL $\alg{A}$ is \emph{strongly s.i.}\ if for each family of left (or equivalently, right) congruences $(\theta_{i})_{i \in I}$ of~$\alg{A}$, if $\bigcap_{i \in I} \theta_{i} = \idcon_{\alg{A}}$, then $\theta_{i} = \idcon_{\alg{A}}$ for some $i \in I$. It is \emph{strongly f.s.i.}\ if this implication holds for $I$ finite. It is \emph{strongly simple} if $\LCon \alg{A}$ (or equivalently, $\RCon \alg{A}$) is a two-element lattice.
\end{definition}

\begin{definition}
  A RL $\alg{A}$ is \emph{semisimple} if it is isomorphic to a subdirect product of simple RLs, or equivalently if $\idcon_{\alg{A}}$ is an intersection of maximal non-trivial congruences. It is \emph{strongly semisimple} if $\idcon_{\alg{A}}$ it is isomorphic to a subdirect product of strongly simple RLs, or equivalently is an intersection of congruences which are maximal non-trivial left (or equivalently, right) congruences.
\end{definition}

  Because in CRLs every left or right congruence is a congruence, strongly simple (semisimple, f.s.i., s.i.) CRLs coincide with simple (semisimple, f.s.i., s.i.) CRLs.

  To make the following theorem more concise, in its statement we shall count the trivial RL as (strongly) simple and therefore as (strongly) s.i.\ and f.s.i..

\begin{theorem} \label{thm: main}
  Let $\class{K}$ be a positive universal class of pointed lattices such that $\class{K} \vDash \one \wedge (x \vee y) \equals (\one \wedge x) \vee (\one \wedge y)$. Then:
\begin{enumerate}[(i)]
\item The class of pointed lattice subreducts of f.s.i.\ semi-$\class{K}$ CRLs is $\class{K}_{\pp}$.
\item The class of pointed lattice subreducts of semi-$\class{K}$ CRLs is $\mathbb{ISP}(\class{K}_{\pp})$.
\end{enumerate}
  Moreover:
\begin{enumerate}[(i)]
\setcounter{enumi}{2}
\item The class of pointed lattice subreducts of strongly f.s.i.\ semi-$\class{K}$ RLs is $\class{K}_{\pp}$.
\item The class of pointed lattice subreducts of semi-$\class{K}$ RLs is $\mathbb{ISP}(\class{K}_{\pp})$.
\end{enumerate}
  We can replace \emph{(strongly) f.s.i.}\ by \emph{(strongly) s.i.}\ or by \emph{(strongly) simple} above. We can also replace \emph{CRLs} by \emph{semisimple CRLs} and \emph{RLs} by \emph{strongly semisimple RLs}.
\end{theorem}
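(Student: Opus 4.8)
The plan is to prove the four numbered claims together with the ``moreover'' clauses, exploiting the fact that the strengthenings from (strongly) simple up to (strongly) f.s.i.\ sandwich each subreduct class between two bounds that both turn out to equal the target class. Throughout, write $\rho$ for the pointed lattice reduct. The starting observation is that $\RL_{\class{K}}$ and $\CRL_{\class{K}}$, being positive universal classes, are closed under $\mathbb{H}$, $\mathbb{S}$, and $\mathbb{P}_{\mathrm{U}}$; as $\rho$ commutes with these operators and $\class{K}$ is closed under them, this is immediate. In particular the varieties of semi-$\class{K}$ RLs and CRLs coincide with $\mathbb{ISP}(\RL_{\class{K}})$ and $\mathbb{ISP}(\CRL_{\class{K}})$ and with the quasivarieties generated by $\RL_{\class{K}}$ and $\CRL_{\class{K}}$. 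For the ``$\supseteq$'' inclusions of all four claims I would invoke Theorem~\ref{thm: drastic}: each $\alg{L} \in \class{K}_{\pp}$ is the reduct of a simple CRL $\alg{B}$ with $\rho(\alg{B}) \in \class{K}$, hence a (strongly) simple member of both $\CRL_{\class{K}}$ and $\RL_{\class{K}}$; for (ii) and (iv) one embeds $\alg{L}$ into a product $\prod_j \alg{L}_j$ of such $\alg{L}_j = \rho(\alg{B}_j)$ and takes the reduct of $\prod_j \alg{B}_j$, a semisimple semi-$\class{K}$ CRL (and RL), since a product of simple algebras is semisimple.

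For the ``$\subseteq$'' inclusions of (i) and (iii), let $\alg{A}$ be an f.s.i.\ semi-$\class{K}$ CRL, respectively a strongly f.s.i.\ semi-$\class{K}$ RL. First (prime-pointedness): by Theorem~\ref{thm: theta iso} the lattice $\Fi \alg{A}$ is isomorphic to $\LCon \alg{A}$, carrying the least filter $\up \one$ to $\idcon_{\alg{A}}$; strong finite subdirect irreducibility says exactly that $\idcon_{\alg{A}}$ is meet irreducible in $\LCon \alg{A}$, so $\up \one$ is meet irreducible in $\Fi \alg{A}$. Lemma~\ref{lemma: theta f irreducible} then gives that $\alg{A} \iso \alg{A} / \LTheta(\up \one)$ is irreducible-pointed, and since $\rho(\alg{A})$ validates $\one \wedge (x \vee y) \equals (\one \wedge x) \vee (\one \wedge y)$ (being a subdirect product of reducts in $\class{K}$), Fact~\ref{fact: semi-prime-pointed} upgrades this to prime-pointed. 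In the CRL case every multiplicative $\one$-filter is normal, so $\LCon \alg{A} = \Con \alg{A}$ and ordinary f.s.i.\ already yields meet irreducibility in $\Fi \alg{A}$; this is why (i) needs only f.s.i.\ while (iii) genuinely needs the ``strong'' hypothesis, reflecting that s.i.\ RLs need not be irreducible-pointed.

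The main obstacle is the second half: membership of $\rho(\alg{A})$ in $\class{K}$ itself, not merely in $\mathbb{ISP}(\class{K})$, since a prime-pointed semi-$\class{K}$ lattice need not lie in $\class{K}$ (witness $\Mthree \oplus \one$ when $\class{K}$ has height at most $3$). Here I would apply the quasivariety form of J\'onsson's lemma, Lemma~\ref{lemma: jonsson}. Because $\RL_{\class{K}}$ (resp.\ $\CRL_{\class{K}}$) is closed under $\mathbb{H}$, the quasivariety it generates is the variety of semi-$\class{K}$ RLs (resp.\ CRLs), so every congruence is a relative congruence and ``r.f.s.i.''\ coincides with ``f.s.i.''. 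As strong finite subdirect irreducibility implies ordinary finite subdirect irreducibility (congruences being left congruences), $\alg{A}$ is r.f.s.i., whence Lemma~\ref{lemma: jonsson} places $\alg{A}$ in $\mathbb{ISP}_{\mathrm{U}}(\RL_{\class{K}}) = \RL_{\class{K}}$ (resp.\ $\CRL_{\class{K}}$) by closure under $\mathbb{S}$ and $\mathbb{P}_{\mathrm{U}}$. Thus $\rho(\alg{A}) \in \class{K}$, and with prime-pointedness $\rho(\alg{A}) \in \class{K}_{\pp}$; as $\class{K}_{\pp}$ is closed under subalgebras, so is every subreduct, proving (i) and (iii). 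The passage to s.i.\ and simple is then free: the implications (strongly) simple $\Rightarrow$ (strongly) s.i.\ $\Rightarrow$ (strongly) f.s.i.\ give descending subreduct classes whose top equals $\class{K}_{\pp}$ by the above and whose bottom already contains $\class{K}_{\pp}$ by Theorem~\ref{thm: drastic}.

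Finally, for the ``$\subseteq$'' inclusions of (ii) and (iv) I would prove the auxiliary statement that $\rho(\alg{A}) \in \mathbb{ISP}(\class{K}_{\pp})$ for every $\alg{A} \in \RL_{\class{K}}$ (a fortiori for $\CRL_{\class{K}}$). Using that $\Fi \alg{A}$ is algebraic (Corollary~\ref{cor: fi a algebraic}), write its least element $\up \one$ as an intersection $\bigcap_i F_i$ of meet irreducible filters; transporting along the isomorphism of Theorem~\ref{thm: theta iso} (meets being intersections) yields $\idcon_{\alg{A}} = \bigcap_i \LTheta(F_i)$ and hence a subdirect embedding of $\rho(\alg{A})$ into $\prod_i \rho(\alg{A} / \LTheta(F_i))$. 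Each factor is a homomorphic image of $\rho(\alg{A}) \in \class{K}$, hence lies in $\class{K}$ by closure under $\mathbb{H}$ (in particular it validates the equation); it is irreducible-pointed by Lemma~\ref{lemma: theta f irreducible}, hence prime-pointed by Fact~\ref{fact: semi-prime-pointed}, so each factor lies in $\class{K}_{\pp}$. Note this step quotients by left congruences, which need not be RL quotients, so one cannot simply cite (iii) here. Given a subreduct $\alg{L} \leq \rho(\alg{A})$ of a semi-$\class{K}$ CRL or RL $\alg{A}$, writing $\alg{A}$ as a subdirect product of members of $\CRL_{\class{K}}$ or $\RL_{\class{K}}$ and applying the auxiliary statement factorwise exhibits $\rho(\alg{A})$, and therefore $\alg{L}$, inside a product of $\class{K}_{\pp}$-lattices, i.e.\ in $\mathbb{ISP}(\class{K}_{\pp})$. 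The ``semisimple'' variants need no extra work, since the algebras built in the ``$\supseteq$'' direction are already (strongly) semisimple.
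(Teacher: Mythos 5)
Your proposal is correct and follows essentially the same route as the paper's proof: Theorem~\ref{thm: drastic} for the reverse inclusions, Lemma~\ref{lemma: jonsson} together with the closure properties of $\RL_{\class{K}}$ to place (strongly) f.s.i.\ algebras in $\RL_{\class{K}}$, and the decomposition of $\up\one$ into meet irreducibles of $\Fi\alg{A}$ combined with Lemma~\ref{lemma: theta f irreducible} and Fact~\ref{fact: semi-prime-pointed} for claims (ii) and (iv), including the correct observation that one must quotient by left congruences rather than cite (iii). The only cosmetic deviations are that you derive (ii) from the same auxiliary statement as (iv) where the paper deduces it from (i), and that you make explicit the sandwich argument for the s.i./simple and semisimple variants that the paper leaves implicit.
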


\begin{proof}
  Claim (i) is proved by replacing \emph{RLs} by \emph{CRLs} throughout the proof of claim (iii). While (iv) is not an immediate consequence of (iii), (ii) does follow immediately from (i), since every semi-$\class{K}$ CRL is up to isomorphism a subdirect product of f.s.i.\ semi-$\class{K}$ CRLs, which are in fact f.s.i.\ CRLs in $\CRL_{\class{K}}$.

  (iii): each prime-pointed lattice in $\class{K}$ is a subreduct of a simple (and therefore strongly simple) CRL by Theorem~\ref{thm: drastic}. Conversely, let $\alg{A}$ be a strongly f.s.i.\ semi-$\class{K}$ RL. Because $\alg{A}$ is f.s.i.\ and the variety of semi-$\class{K}$ RLs is generated as a quasivariety by $\RL_{\class{K}}$, J\'{o}nsson's lemma for quasivarieties (Lemma~\ref{lemma: jonsson}) implies that $\alg{A} \in \mathbb{ISP}_{\mathrm{U}}(\RL_{\class{K}})$. Because $\class{K}$ is a universal class, it follows that the pointed lattice reduct of $\alg{A}$ lies in $\class{K}$. Moreover, since $\alg{A}$ is strongly f.s.i., $\up \one$ is meet irreducible in $\Fi \alg{A}$ by Theorem~\ref{thm: theta iso}, so $\alg{A}$ is irreducible-pointed by Lemma~\ref{lemma: theta f irreducible}. Because $\class{K} \vDash \one \wedge (x \vee y) \equals (\one \wedge x) \vee (\one \wedge y)$, it is prime-pointed by Fact~\ref{fact: semi-prime-pointed}.

  (iv): the right-to-left inclusion follows immediately from (iii), since the class of semi-$\class{K}$ RLs is closed under $\mathbb{ISP}$. Conversely, because each semi-$\class{K}$ RL is up to isomorphism a subdirect product of RLs in $\RL_{\class{K}}$, it suffices to show that each $\alg{A} \in \RL_{\class{K}}$ lies in $\mathbb{ISP}(\class{K}_{\pp})$. Because $\Fi \alg{A}$ is an algebraic lattice by Corollary~\ref{cor: fi a algebraic}, $\up \one$ is an intersection of meet irreducible elements of $\Fi \alg{A}$, so the pointed lattice reduct of $\alg{A}$ is up to isomorphism a subdirect product of the pointed lattices $\alg{A} / \LTheta(F)$ where $F$ ranges over the meet irreducibles in $\Fi \alg{A}$. But $\alg{A} / \LTheta(F)$ is prime-pointed for each meet irreducible $F \in \Fi \alg{A}$ by Lemma~\ref{lemma: theta f irreducible}, and $\alg{A} / \LTheta(F) \in \class{K}$ because $\class{K}$ is a positive universal class and $\alg{A} \in \class{K}$. Thus $\alg{A} / \LTheta(F) \in \class{K}_{\pp}$ for each meet irreducible $F \in \Fi \alg{A}$.
\end{proof}

  In all claims in Theorem~\ref{thm: main} other than (iv), instead of assuming that $\class{K}$ is a positive universal class, the proof works equally well if we only assume that $\class{K}$ is a universal class such that each prime-pointed lattice in $\class{K}$ embeds into a bounded pointed lattice in $\class{K}$ with a splitting pair $\pair{\one}{\coone}$.

\begin{corollary} \label{cor: main}
  The pointed lattice subreducts of
\begin{enumerate}[(i)]
\item integral (C)RLs are precisely the integral semi-prime-pointed lattices,
\item semiconic (C)RLs are precisely the semiconic semi-prime-pointed lattices,
\item semilinear (C)RLs are precisely the distributive semi-prime-pointed \mbox{lattices}.
\end{enumerate}
\end{corollary}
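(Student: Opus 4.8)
The plan is to derive all three items as instances of Theorem~\ref{thm: main}, taking $\class{K}$ to be, respectively, the positive universal class of integral, of conic, and of linear pointed lattices. The first task is to confirm that each of these classes consists of conic pointed lattices and validates the equation $\one \wedge (x \vee y) \equals (\one \wedge x) \vee (\one \wedge y)$ demanded by Theorem~\ref{thm: main}. For integral lattices this is immediate, since $\one \wedge z = z$ for every $z$; for conic lattices it is the first equation in Fact~\ref{fact: semiconic}, established there by a case analysis on the cones; and for linear lattices it follows from distributivity, every chain being distributive. Linear and integral lattices are in particular conic, so all three $\class{K}$ are positive universal classes of conic pointed lattices.

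With $\class{K}$ so fixed, Theorem~\ref{thm: main}(ii) and~(iv) identify the pointed lattice subreducts of semi-$\class{K}$ CRLs and of semi-$\class{K}$ RLs alike as $\mathbb{ISP}(\class{K}_{\pp})$, which accounts for the ``(C)'' in the statement. Two identifications then remain. On the one hand, semi-$\class{K}$ RLs must be recognized as the named varieties: for $\class{K}$ integral these are the integral RLs, for $\class{K}$ conic the semiconic RLs, and for $\class{K}$ linear the semilinear RLs, exactly as recorded in the introduction. On the other hand, and this is the point requiring a little care, $\mathbb{ISP}(\class{K}_{\pp})$ must be matched with the relevant class of semi-prime-pointed lattices.

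For this second identification I would invoke Fact~\ref{fact: semi-kpp}, whose hypothesis---that $\alg{A} \in \class{K}$ implies $\alg{A}_{-} \oplus \one \in \class{K}$---holds in each case: the negative cone $\alg{A}_{-}$ of an integral, conic, or linear lattice is integral, integral, or a chain respectively, and appending a new top preserves both integrality and linearity. Fact~\ref{fact: semi-kpp} then yields $\mathbb{ISP}(\class{K}_{\pp}) = $ semi-$\class{K}$ semi-prime-pointed lattices, and it remains only to describe semi-$\class{K}$ itself. For $\class{K}$ integral this is just the integral pointed lattices (a subdirect product of integral lattices is integral, as $\one$ remains the top); for $\class{K}$ conic it is the semiconic pointed lattices by definition; and for $\class{K}$ linear it collapses to the distributive pointed lattices, as observed in Section~\ref{sec: pointed lattices}. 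Cutting down to the semi-prime-pointed lattices gives in turn the integral, semiconic, and distributive semi-prime-pointed lattices. The only genuine work is this bookkeeping around $\mathbb{ISP}(\class{K}_{\pp})$; once the hypothesis of Fact~\ref{fact: semi-kpp} is checked, each item is a direct specialization of Theorem~\ref{thm: main}.
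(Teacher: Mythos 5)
Your proposal is correct and takes essentially the same route as the paper, whose entire proof reads: ``This follows immediately from Theorem~\ref{thm: main} and Fact~\ref{fact: semi-kpp}.'' Your write-up simply makes explicit the routine verifications the paper leaves to the reader---that each choice of $\class{K}$ is a positive universal class of conic pointed lattices satisfying $\one \wedge (x \vee y) \equals (\one \wedge x) \vee (\one \wedge y)$ and the condition $\alg{A} \in \class{K} \Rightarrow \alg{A}_{-} \oplus \one \in \class{K}$, and that semi-$\class{K}$ collapses to the integral, semiconic, and distributive pointed lattices respectively.
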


\begin{proof}
  This follows immediately from Theorem~\ref{thm: main} and Fact~\ref{fact: semi-kpp}.
\end{proof}

  In the remainder of this section, we introduce the varieties of left pre-$\class{K}$ RLs and CRLs and show that their pointed lattice subreducts coincide with the pointed lattice subreducts of semi-$\class{K}$ RLs and CRLs (Theorem~\ref{thm: pre-k subreducts}).

\begin{definition}
  Let $\class{K}$ be a positive universal class of pointed lattices and let $\alg{A}$ be a RL. Then a multiplicative $\one$-filter $F$ of $\alg{A}$ is a \emph{left $\class{K}$-filter} if $\alg{A} / \LTheta(F) \in \class{K}$.
\end{definition}

\begin{definition} \label{def: pre-k}
  Given a positive universal sentence
\begin{align*}
  \Phi \assign (t_{1} \leq u_{1} \mathrm{~or~} \dots \mathrm{~or~} t_{k} \leq u_{k})
\end{align*}
  in the signature of pointed lattices, let $\Pre(\Phi)$ be the equation
\begin{align*}
  \Pre(\Phi) \assign (\one \wedge t_{1} \bs u_{1}) \vee \dots \vee (\one \wedge t_{k} \bs u_{k}) \equals \one
\end{align*}
  in the signature of RLs. Given a positive universal class $\class{K}$ of pointed lattices axiomatized by the positive universal sentences $\Phi_{i}$ for $i \in I$, the variety of \emph{left pre-$\class{K}$} RLs is the variety axiomatized by the equations $\Pre(\Phi_{i})$ for $i \in I$.
\end{definition}

  The fact that the above definition of left pre-$\class{K}$ RLs is independent of the chosen axiomatization of $\class{K}$ will follow immediately from Theorem~\ref{thm: pre-k}, since the definition of a left $\class{K}$-filter does not depend on the choice of axiomatization.

  For example, a RL is \emph{left prelinear} if it satisfies the equation
\begin{align*}
  (\one \wedge x \bs y) \vee (\one \wedge y \bs x) \equals \one,
\end{align*}
  and it is \emph{left preconic} if it satisfies the equation
\begin{align*}
  (\one \wedge x \bs \one) \vee (\one \wedge x) \equals \one.
\end{align*}

\begin{lemma} \label{lemma: pre-k}
  Let $\alg{A}$ be RL. If the pointed lattice reduct of $\alg{A}$ lies in $\class{K}$, then $\alg{A}$ is a left pre-$\class{K}$ RL. The converse implication holds if $\one$ is join irreducible in $\alg{A}$.
\end{lemma}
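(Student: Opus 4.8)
The plan is to prove the two implications separately, reducing everything to a single arithmetic fact about residuals. First I would fix an axiomatization of $\class{K}$ by positive universal sentences $\Phi_i$ for $i \in I$, each written in the form $\Phi_i = (t_1 \leq u_1 \text{ or } \dots \text{ or } t_k \leq u_k)$; this is no loss of generality, since any lattice equation $s \equals t$ is equivalent to the single inequality $s \vee t \leq s \wedge t$, so every positive universal class admits an axiomatization of this shape (which is also the shape on which $\Pre$ is defined). The one fact I would isolate at the outset is that in any RL, $\one \leq a \bs b$ iff $a \leq b$ (immediate from residuation together with $a \cdot \one = a$), and hence $\one \wedge (a \bs b) = \one$ iff $a \leq b$. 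Thus the $j$-th disjunct of $\Pre(\Phi_i)$ equals $\one$ precisely when $t_j \leq u_j$.

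For the forward implication I would assume the pointed lattice reduct of $\alg{A}$ lies in $\class{K}$ and fix an arbitrary $i$ and an arbitrary valuation. Then $\Phi_i$ holds, so $t_j \leq u_j$ for some $j$, giving $\one \wedge t_j \bs u_j = \one$. Since every disjunct of $\Pre(\Phi_i)$ is below $\one$ while this one already equals $\one$, the whole join equals $\one$; hence $\alg{A}$ satisfies $\Pre(\Phi_i)$ for every $i$ and is a left pre-$\class{K}$ RL. This direction uses nothing about $\one$ beyond its being the unit.

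For the converse I would assume $\alg{A}$ is left pre-$\class{K}$ and that $\one$ is join irreducible, and first record the routine induction that lifts join irreducibility from binary to finite joins: $x_1 \vee \dots \vee x_k = \one$ forces some $x_j = \one$. Fixing $i$ and a valuation, $\Pre(\Phi_i)$ gives $(\one \wedge t_1 \bs u_1) \vee \dots \vee (\one \wedge t_k \bs u_k) = \one$, so by this induction some disjunct $\one \wedge t_j \bs u_j$ equals $\one$, i.e.\ $t_j \leq u_j$ by the isolated fact, so $\Phi_i$ holds under the valuation. As $i$ and the valuation are arbitrary, the reduct satisfies every $\Phi_i$ and lies in $\class{K}$.

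I do not expect a genuine obstacle here: the forward direction is immediate, and in the converse the join irreducibility of $\one$ is exactly the hypothesis that licenses passing from the join $\Pre(\Phi_i) = \one$ back to one true disjunct $t_j \leq u_j$. The only points requiring a word of care are the reduction to the inequality form of the axioms and the finite-join induction; both are routine, and the latter is precisely where the standing assumption that each $\Phi_i$ is a \emph{finite} disjunction is used.
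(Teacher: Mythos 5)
Your proposal is correct and follows essentially the same route as the paper's proof: both directions hinge on the observation that $\one \wedge (a \bs b) = \one$ if and only if $a \leq b$, with join irreducibility of $\one$ (lifted to finite joins) used to extract a true disjunct from $\Pre(\Phi_i)$ in the converse. The only difference is that you spell out the reduction to inequality-form axioms and the finite-join induction, which the paper leaves implicit.
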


\begin{proof}
  If $\alg{A}$ lies in $\class{K}$ as a pointed lattice, then for each positive universal sentence $\Phi$ valid in $\class{K}$ and all elements $a_{1}, \dots, a_{n} \in \alg{A}$ there is some $i \in \{ 1, \dots, k \}$ such that $t_{i}^{\alg{A}}(a_{1}, \dots, a_{n}) \leq u_{i}^{\alg{A}}(a_{1}, \dots, a_{n})$. Therefore $\one \wedge t_{i}^{\alg{A}}(a_{1}, \dots, a_{n}) \bs u_{i}^{\alg{A}}(a_{1}, \dots, a_{n}) = \one$ and $\alg{A} \vDash \Pre(\Phi)$.

  Conversely, suppose that $\alg{A}$ is left pre-$\class{K}$ and $\one$ is join irreducible in $\alg{A}$. For each positive universal sentence $\Phi$ in a given axiomatization of $\class{K}$ and for all $a_{1}, \dots, a_{n}$ there is some $i \in \{ 1, \dots, k \}$ such that $\one \wedge t_{i}^{\alg{A}}(a_{1}, \dots, a_{n}) \bs u_{i}^{\alg{A}}(a_{1}, \dots, a_{n}) = \one$, i.e.\ such that $t_{i}^{\alg{A}}(a_{1}, \dots, a_{n}) \leq u_{i}^{\alg{A}}(a_{1}, \dots, a_{n})$. But this means that $\Phi$ holds in the pointed lattice reduct of $\alg{A}$.
\end{proof}

\begin{fact}
  Let $\class{K}$ be a positive universal class of pointed lattices. Then:
\begin{enumerate}[(i)]
\item Each semi-$\class{K}$ RL is left pre-$\class{K}$.
\item Semi-$\class{K}$ CRLs coincide with left pre-$\class{K}$ CRLs.
\end{enumerate}
\end{fact}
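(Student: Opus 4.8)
The plan is to derive both claims from Lemma~\ref{lemma: pre-k} together with the fact that left pre-$\class{K}$ is an equational class.

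For claim (i), I would first observe that by the forward direction of Lemma~\ref{lemma: pre-k} every RL in $\RL_{\class{K}}$ is left pre-$\class{K}$; that is, $\RL_{\class{K}}$ is contained in the variety of left pre-$\class{K}$ RLs. Since this latter class is axiomatized by the equations $\Pre(\Phi_{i})$, it is a variety and hence closed under $\mathbb{HSP}$, in particular under isomorphic copies of subdirect products. As the semi-$\class{K}$ RLs are by definition exactly the variety generated by $\RL_{\class{K}}$, this variety is contained in the variety of left pre-$\class{K}$ RLs, which is precisely (i). Running the same argument with $\CRL_{\class{K}}$ in place of $\RL_{\class{K}}$ yields the inclusion of semi-$\class{K}$ CRLs into left pre-$\class{K}$ CRLs, i.e.\ one half of (ii).

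The substance of (ii) is the converse inclusion, and this is where commutativity enters. Let $\alg{A}$ be a left pre-$\class{K}$ CRL. By Birkhoff's subdirect representation theorem, $\alg{A}$ is isomorphic to a subdirect product of its subdirectly irreducible quotients $\alg{A} / \theta$. Each such quotient is again a CRL, and since the class of left pre-$\class{K}$ CRLs is equational, hence closed under homomorphic images, each $\alg{A} / \theta$ is again left pre-$\class{K}$. Now I invoke the fact recalled above from~\cite{Glimpse} that every subdirectly irreducible CRL is irreducible-pointed, i.e.\ that $\one$ is join irreducible in $\alg{A} / \theta$. This is exactly the hypothesis needed to apply the converse direction of Lemma~\ref{lemma: pre-k}, which then yields that the pointed lattice reduct of $\alg{A} / \theta$ lies in $\class{K}$, so that $\alg{A} / \theta \in \CRL_{\class{K}}$. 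Consequently $\alg{A}$ is up to isomorphism a subdirect product of CRLs in $\CRL_{\class{K}}$, hence a semi-$\class{K}$ CRL.

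The main obstacle, and the reason (ii) is stated for CRLs rather than for RLs, is precisely the step requiring that the subdirectly irreducible factors be irreducible-pointed: the converse of Lemma~\ref{lemma: pre-k} has no content without the join irreducibility of $\one$, and for general RLs subdirectly irreducible algebras need not be irreducible-pointed, as noted in the discussion preceding the definition of left congruences. Thus for RLs one obtains only the one-sided inclusion (i); the variety of left pre-$\class{K}$ RLs is in general strictly larger than that of semi-$\class{K}$ RLs, and their agreement persists only at the level of pointed lattice subreducts (Theorem~\ref{thm: pre-k subreducts}).
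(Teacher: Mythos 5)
Your proof is correct and follows essentially the same route as the paper: the forward direction of Lemma~\ref{lemma: pre-k} gives (i) and one inclusion of (ii), while the converse inclusion in (ii) combines Birkhoff's subdirect decomposition into s.i.\ quotients (which remain left pre-$\class{K}$ since that class is a variety), the join irreducibility of $\one$ in s.i.\ CRLs, and the converse direction of Lemma~\ref{lemma: pre-k}. The only cosmetic difference is that you source the join irreducibility of $\one$ from the earlier discussion citing \cite{Glimpse}, whereas the paper cites Theorem~\ref{thm: main}; both are valid within the paper.
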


\begin{proof}
  This follows immediately from Lemma~\ref{lemma: pre-k}, using the fact that in an s.i.\ CRL the element $\one$ is join irreducible by Theorem~\ref{thm: main} and that each left pre-$\class{K}$ CRL is isomorphic to a subdirect product of s.i.\ left pre-$\class{K}$ CRLs.
\end{proof}

  For example, each semilinear RL is left prelinear and each semiconic RL is left preconic, and the converse implications hold for CRLs (see~\cite[Corolary~5.1.3]{RLBook} and \cite[Section~2.2]{ConicFG}). The lattice-ordered group of order automorphisms of the real line serves as a well-known example of a left prelinear RL (with residuals $x \bs y \assign x^{-1} y$ and $x / y \assign x y^{-1}$) which is not semilinear. It also separates the classes of semiconic and left preconic RLs: each conic and therefore each semiconic RL satisfies
\begin{align*}
  (\one \wedge y \bs x y) \vee (\one \wedge x \bs \one) \equals \one,
\end{align*}
  but this equation fails in this lattice-ordered group if we take $x$ and $y$ to be, respectively, the functions $f, g\colon \mathbb{R} \to \mathbb{R}$ such that $f\colon x \mapsto 2x$ and $g\colon x \mapsto x - 2$.

\begin{theorem} \label{thm: pre-k}
  Let $\class{K}$ be a positive universal class of pointed lattices. Then the following are equivalent for each RL $\alg{A}$:
\begin{enumerate}[(i)]
\item $\alg{A}$ is left pre-$\class{K}$.
\item Each (completely) meet irreducible $F \in \Fi \alg{A}$ is a left $\class{K}$-filter.
\item $\up \one$ is an intersection of left $\class{K}$-filters of $\alg{A}$.
\item Every $F \in \Fi \alg{A}$ is an intersection of left $\class{K}$-filters of $\alg{A}$.
\end{enumerate}
\end{theorem}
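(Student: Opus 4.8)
The plan is to establish the cycle (i) $\Rightarrow$ (ii) $\Rightarrow$ (iv) $\Rightarrow$ (iii) $\Rightarrow$ (i), two of whose links are immediate. The engine of the argument will be a dictionary between the equation $\Pre(\Phi)$ and the disjunction $\Phi$, resting on two observations. First, for any elements $a_{1}, \dots, a_{n}$ and any index $i$, one has $\one \wedge t_{i} \bs u_{i} = \one$ if and only if $t_{i} \leq u_{i}$, so that the single equation $\Pre(\Phi)$, namely $\bigvee_{i} (\one \wedge t_{i} \bs u_{i}) \equals \one$, records a join of ``truth values'' of the disjuncts of $\Phi$. Second, for a multiplicative $\one$-filter $F$ and $\theta \assign \LTheta(F)$, the image of $\one \wedge t_{i} \bs u_{i}$ is $\one$ in $\alg{A} / \theta$ precisely when $t_{i} \bs u_{i} \in F$, which by the description of $\LTheta(F)$ in Theorem~\ref{thm: theta iso} is precisely when $t_{i} \leq_{\theta} u_{i}$, i.e.\ when the $i$-th disjunct of $\Phi$ holds in the pointed lattice $\alg{A} / \theta$. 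Throughout I must bear in mind that $\theta$ is only a \emph{left} congruence, so that $\alg{A} / \theta$ is a pointed lattice but in general not a RL; I may therefore not invoke Lemma~\ref{lemma: pre-k} on the quotient, and must work with membership in $F$ and with the lattice reduct of $\alg{A} / \theta$ directly.

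For (i) $\Rightarrow$ (ii) I would fix a meet irreducible $F \in \Fi \alg{A}$ and put $\theta \assign \LTheta(F)$, so that $\one$ is join irreducible in $\alg{A} / \theta$ by Lemma~\ref{lemma: theta f irreducible}. Given an axiom $\Phi \assign (t_{1} \leq u_{1} \mathrm{~or~} \dots \mathrm{~or~} t_{k} \leq u_{k})$ of $\class{K}$ and elements $a_{1}, \dots, a_{n}$, the hypothesis $\alg{A} \vDash \Pre(\Phi)$ gives $\bigvee_{i} (\one \wedge t_{i} \bs u_{i}) = \one$ in $\alg{A}$. Applying the (join- and meet-preserving) quotient map produces a join of elements below $\one$ that equals $\one$ in $\alg{A} / \theta$, and join irreducibility of $\one$ forces some $\one \wedge t_{j} \bs u_{j}$ to map to $\one$; by the dictionary this says $t_{j} \leq_{\theta} u_{j}$, so $\Phi$ holds in $\alg{A} / \theta$. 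As the tuple and the axiom were arbitrary, $\alg{A} / \theta \in \class{K}$ and $F$ is a left $\class{K}$-filter. Note this covers every meet irreducible $F$, hence in particular every completely meet irreducible one, reconciling the two readings of~(ii).

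The links (ii) $\Rightarrow$ (iv) $\Rightarrow$ (iii) are short. Since $\Fi \alg{A}$ is an algebraic lattice (Corollary~\ref{cor: fi a algebraic}), every element is a meet of completely meet irreducible elements; by (ii) each of these is a left $\class{K}$-filter, so every $F \in \Fi \alg{A}$ is an intersection of left $\class{K}$-filters, which is (iv). Specialising to the least element $F = \up \one$ of $\Fi \alg{A}$ then yields (iii).

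For (iii) $\Rightarrow$ (i) I would write $\up \one = \bigcap_{i \in I} F_{i}$ with each $F_{i}$ a left $\class{K}$-filter and verify $\alg{A} \vDash \Pre(\Phi)$ for each axiom $\Phi$ and elements $a_{1}, \dots, a_{n}$. Fixing $i$, since $\alg{A} / \LTheta(F_{i}) \in \class{K}$ satisfies $\Phi$ some disjunct holds there, i.e.\ $t_{j} \bs u_{j} \in F_{i}$; as $\one \in F_{i}$ and $F_{i}$ is closed under meets, $\one \wedge t_{j} \bs u_{j} \in F_{i}$, hence $\bigvee_{l} (\one \wedge t_{l} \bs u_{l}) \in F_{i}$. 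Since this holds for every $i$, the join lies in $\bigcap_{i} F_{i} = \up \one$, i.e.\ $\bigvee_{l} (\one \wedge t_{l} \bs u_{l}) \geq \one$; the reverse inequality is automatic, so $\Pre(\Phi)$ holds. The main obstacle I anticipate is not conceptual but a matter of careful bookkeeping with the left (rather than two-sided) congruence $\LTheta(F)$: the equivalences underlying the dictionary, that the image of $\one \wedge t_{i} \bs u_{i}$ is $\one$ iff $t_{i} \bs u_{i} \in F$ iff $t_{i} \leq_{\theta} u_{i}$, must be justified from the left-congruence axioms and Theorem~\ref{thm: theta iso} alone, precisely because the quotient is not available as a RL to which Lemma~\ref{lemma: pre-k} could be applied.
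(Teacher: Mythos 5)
Your proposal is correct and follows essentially the same route as the paper's proof: the same cycle (i) $\Rightarrow$ (ii) $\Rightarrow$ (iv) $\Rightarrow$ (iii) $\Rightarrow$ (i), with Lemma~\ref{lemma: theta f irreducible} supplying join irreducibility of $\one$ in $\alg{A}/\LTheta(F)$ for (i) $\Rightarrow$ (ii), algebraicity of $\Fi\alg{A}$ for (ii) $\Rightarrow$ (iv), and the filter-membership argument for (iii) $\Rightarrow$ (i). Your ``dictionary'' equivalence $t_i \bs u_i \in F \iff t_i \leq_\theta u_i$ is exactly the step the paper justifies via $\one \leq p_i \bs p_i \leq_\theta p_i \bs q_i$ (and, conversely, left multiplication by $t_i$), so the bookkeeping you flag does go through.
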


\begin{proof}
  (i) $\Rightarrow$ (ii): consider a meet irreducible $F \in \Fi \alg{A}$. The element $\one / \LTheta(F)$ is join irreducible in $\alg{A} / \LTheta(F)$ by Lemma~\ref{lemma: theta f irreducible}. Since $\alg{A}$ is left pre-$\class{K}$, for each positive universal sentence
\begin{align*}
  \Phi \assign (t_{1}(x_{1}, \dots, x_{n}) \leq u_{1}(x_{1}, \dots, x_{n}) \text{ or } \dots \text{ or } t_{k}(x_{1}, \dots, x_{n}) \leq u_{k}(x_{1}, \dots, x_{n}))
\end{align*}
  in a chosen axiomatization of the positive universal class $\class{K}$ and for all elements $a_{1}, \dots, a_{n} \in \alg{A}$ there is some $i \in \{ 1, \dots, k \}$ such that for $\theta \assign \LTheta(F)$
\begin{align*}
  (\one \wedge t_{i}^{\alg{A}}(a_{1}, \dots, a_{n}) \bs u_{i}^{\alg{A}}(a_{1}, \dots, a_{n})) / \theta = \one / \theta.
\end{align*}
  That is, $t_{i}^{\alg{A}}(a_{1}, \dots, a_{n}) \bs u_{i}^{\alg{A}}(a_{1}, \dots, a_{n}) \in F$, so the definition of $\LTheta(F)$ yields that $t_{i}^{\alg{A} / \theta}(a_{1} / \theta, \dots, a_{n} / \theta) \leq u_{i}^{\alg{A} / \theta}(a_{1} / \theta, \dots, a_{n} / \theta)$. Consequently, the positive universal sentence $\Phi$ holds in $\alg{A} / \theta$.

  (ii) $\Rightarrow$ (iv) because $\Fi \alg{A}$ is an algebraic lattice, each $F \in \Fi \alg{A}$ is a meet of completely meet irreducibles in $\Fi \alg{A}$.

  (iv) $\Rightarrow$ (iii): trivial. (iii) $\Rightarrow$ (i): it suffices to show that for each positive universal sentence
\begin{align*}
  \Phi \assign (t_{1}(x_{1}, \dots, x_{n}) \leq u_{1}(x_{1}, \dots, x_{n}) \text{ or } \dots \text{ or } t_{k}(x_{1}, \dots, x_{n}) \leq u_{k}(x_{1}, \dots, x_{n}))
\end{align*}
  valid in $\class{K}$ and all $a_{1}, \dots, a_{n} \in \alg{A}$, the element
\begin{align*}
  (\one \wedge t_{1}^{\alg{A}}(a_{1}, \dots, a_{n}) \bs u_{1}^{\alg{A}}(a_{1}, \dots, a_{n})) \vee \dots \vee (\one \wedge t_{k}^{\alg{A}}(a_{1}, \dots, a_{n}) \bs u_{k}^{\alg{A}}(a_{1}, \dots, a_{n}))
\end{align*}
  lies in $F$ for each left $\class{K}$-filter $F$ of $\alg{A}$. Thus consider a left $\class{K}$-filter $F$ of $\alg{A}$ and let $\theta \assign \LTheta(F)$. Taking $p_{i} \assign t_{i}^{\alg{A} / \theta}(a_{1} / \theta, \dots, a_{m} / \theta)$ and $q_{i} \assign u_{i}^{\alg{A} / \theta}(a_{1} / \theta, \dots, a_{m} / \theta)$, it suffices to show that for each left $\class{K}$-filter $F$ there is some $i \in \{ 1, \dots, k \}$ such that $p_{i} \bs q_{i} \in F$. Because $F$ is a left $\class{K}$-filter, $\alg{A} / \theta \vDash \Phi$, so $p_{i} \leq q_{i}$ for some $i \in \{ 1, \dots, k \}$. Because $\theta$ is a left congruence, $\one \leq p_{i} \bs p_{i} \leq_{\theta} p_{i} \bs q_{i}$, so indeed $p_{i} \bs q_{i} \in \up_{\theta} \one = F$.
\end{proof}

  We remark that left congruences can equivalently be understood as congruences of modules: each RL $\alg{A}$ acts on its pointed lattice reduct by left multiplication and left division. This turns the pointed lattice reduct of $\alg{A}$ into a \emph{left residuated $\alg{A}$-module} $\alg{M}$, and the left congruences of $\alg{A}$ are by definition precisely the congruences of this left residuated $\alg{A}$-module $\alg{M}$, i.e.\ the lattice congruences of $\alg{M}$ compatible with the action of $\alg{A}$. We shall not pursue this angle in more detail here, but it is worth remarking that condition (iii) in Theorem~\ref{thm: pre-k} can equivalently be stated in terms of modules as: the left residuated $\alg{A}$-module $\alg{M}$ is up to isomorphism a subdirect product of left residuated $\alg{A}$-modules with a pointed lattice reduct in~$\class{K}$. (This is similar in spirit but not identical in detail to the semimodules studied in~\cite{GalatosHorcik}, where $\alg{A}$ was an idempotent semiring rather than a residuated lattice and $\alg{A}$ acted on $\alg{M}$ only by multiplication rather than by multiplication and division.)

\begin{theorem} \label{thm: pre-k subreducts}
  Let $\class{K}$ be a positive universal class of pointed lattices such that $\class{K} \vDash \one \wedge (x \vee y) \equals (\one \wedge x) \vee (\one \wedge y)$. Then:
\begin{enumerate}[(i)]
\item The class of pointed lattice subreducts of strongly f.s.i.\ left pre-$\class{K}$ RLs is $\class{K}_{\pp}$.
\item The class of pointed lattice subreducts of left pre-$\class{K}$ RLs is $\mathbb{ISP}(\class{K}_{\pp})$.
\end{enumerate}
  We can replace \emph{strongly f.s.i.}\ by \emph{strongly s.i.}\ or by \emph{strongly simple} above. We can also replace \emph{RLs} by \emph{strongly semisimple RLs}.
\end{theorem}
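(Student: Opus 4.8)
The plan is to run the argument of Theorem~\ref{thm: main} almost verbatim, with one substitution: where that proof used Jónsson's lemma to pass from membership in semi-$\class{K}$ to a pointed lattice reduct in $\class{K}$, I would instead use the intrinsic description of left pre-$\class{K}$ RLs furnished by Lemma~\ref{lemma: pre-k} and Theorem~\ref{thm: pre-k}. Each claim splits into an inclusion and its converse.

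For the converse inclusion in (i), let $\alg{A}$ be a strongly f.s.i.\ left pre-$\class{K}$ RL. Strong finite subdirect irreducibility means precisely that $\idcon_{\alg{A}}$ is meet irreducible in $\LCon\alg{A}$; transporting this along the isomorphism $\LTheta\colon \Fi\alg{A} \iso \LCon\alg{A}$ of Theorem~\ref{thm: theta iso}, the least multiplicative $\one$-filter $\up\one$ is meet irreducible in $\Fi\alg{A}$. Applying Lemma~\ref{lemma: theta f irreducible} to $F \assign \up\one$, for which $\alg{A}/\LTheta(F) = \alg{A}$, shows that $\one$ is join irreducible in $\alg{A}$. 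This is the step I expect to carry the real weight: the join irreducibility of $\one$ squeezed out of strong f.s.i.\ is exactly the hypothesis that Lemma~\ref{lemma: pre-k} requires in order to upgrade ``$\alg{A}$ is left pre-$\class{K}$'' to ``the pointed lattice reduct of $\alg{A}$ lies in $\class{K}$''. Once the reduct is in $\class{K}$, the equation $\one \wedge (x \vee y) \equals (\one \wedge x) \vee (\one \wedge y)$ valid in $\class{K}$ lets Fact~\ref{fact: semi-prime-pointed} promote irreducible-pointed to prime-pointed, so the reduct lies in $\class{K}_{\pp}$; closure of $\class{K}_{\pp}$ under subalgebras then handles arbitrary subreducts. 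The inclusion of $\class{K}_{\pp}$ in the class of subreducts reuses Theorem~\ref{thm: drastic}: each prime-pointed lattice in $\class{K}$ is the reduct of a simple CRL with reduct in $\class{K}$, which is strongly simple, hence strongly f.s.i., and is left pre-$\class{K}$ by the easy half of Lemma~\ref{lemma: pre-k}.

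For (ii), the inclusion of $\mathbb{ISP}(\class{K}_{\pp})$ in the class of subreducts is formal: left pre-$\class{K}$ RLs form a variety, hence are closed under products, so the class of their pointed lattice subreducts is closed under $\mathbb{ISP}$ and, by (i), contains $\class{K}_{\pp}$. For the converse I would take any left pre-$\class{K}$ RL $\alg{A}$ and invoke Theorem~\ref{thm: pre-k}(ii): each completely meet irreducible $F \in \Fi\alg{A}$ is a left $\class{K}$-filter, so $\alg{A}/\LTheta(F) \in \class{K}$, and Lemma~\ref{lemma: theta f irreducible} together with Fact~\ref{fact: semi-prime-pointed} makes this quotient prime-pointed, i.e.\ $\alg{A}/\LTheta(F) \in \class{K}_{\pp}$. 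Since $\Fi\alg{A}$ is algebraic (Corollary~\ref{cor: fi a algebraic}), $\up\one$ is the intersection of its completely meet irreducibles, so $\bigcap_{F} \LTheta(F) = \idcon_{\alg{A}}$ and the reduct of $\alg{A}$ embeds subdirectly into $\prod_{F} \alg{A}/\LTheta(F)$, a product of $\class{K}_{\pp}$-algebras; hence it, and any subreduct, lies in $\mathbb{ISP}(\class{K}_{\pp})$.

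Finally, the stated substitutions are routine. Both strongly s.i.\ and strongly simple imply strongly f.s.i., so the converse inclusion of (i) applies unchanged, while the drastic CRLs of Theorem~\ref{thm: drastic} are already strongly simple and so witness the forward inclusion at each strength. A strongly semisimple left pre-$\class{K}$ RL is in particular left pre-$\class{K}$, so its reduct lies in $\mathbb{ISP}(\class{K}_{\pp})$ by (ii); conversely the class of strongly semisimple left pre-$\class{K}$ RLs is closed under products and contains those drastic CRLs, so the forward inclusion of (ii) survives the replacement of ``RLs'' by ``strongly semisimple RLs''.
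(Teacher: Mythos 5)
Your proposal is correct and follows essentially the same route as the paper: both converse inclusions rest on the isomorphism $\LTheta\colon \Fi\alg{A} \iso \LCon\alg{A}$ (Theorem~\ref{thm: theta iso}), Lemma~\ref{lemma: theta f irreducible}, Theorem~\ref{thm: pre-k}, and the upgrade from irreducible-pointed to prime-pointed via Fact~\ref{fact: semi-prime-pointed}, while the forward inclusions come from the drastic-multiplication CRLs of Theorem~\ref{thm: drastic} together with Lemma~\ref{lemma: pre-k}. The only cosmetic differences are that you unpack the paper's citations of Theorem~\ref{thm: main} into their constituent steps and replace its appeal to the strongly semisimple case of Theorem~\ref{thm: main} in part (ii) by a direct $\mathbb{ISP}$-closure argument, which is equally valid.
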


\begin{proof}
  (i): each algebra in $\class{K}_{\pp}$ is a subreduct of a simple (and therefore strongly simple) semi-$\class{K}$ CRL by Theorem~\ref{thm: main}, which is a left pre-$\class{K}$ RL by Theorem~\ref{thm: pre-k}. Conversely, in each strongly f.s.i.\ left pre-$\class{K}$ RL $\alg{A}$ the element $\one$ is join irreducible by Theorem~\ref{thm: main}, so the pointed lattice reduct of $\alg{A}$ lies in $\class{K}$ by Lemma~\ref{lemma: pre-k}.

  (ii): each algebra in $\mathbb{ISP}(\class{K}_{\pp})$ is a subreduct of a strongly semisimple semi-$\class{K}$ RL by Theorem~\ref{thm: main}, which is a left pre-$\class{K}$ RL by Theorem~\ref{thm: pre-k}. Conversely, let $\alg{A}$ be a left pre-$\class{K}$ RL. By Theorem~\ref{thm: pre-k}, $\up \one$ is an intersection of meet irreducible left $\class{K}$-filters $F \in \Fi \alg{A}$. The pointed lattice reduct of $\alg{A}$ is therefore up to isomorphism a subdirect product of the pointed lattices $\alg{A} / \LTheta(F)$ where $F$ ranges over the meet irreducible left $\class{K}$-filters in $\Fi \alg{A}$. But $\alg{A} / \LTheta(F) \in \class{K}$ by definition and $\alg{A} / \LTheta(F)$ is prime-pointed by Lemma~\ref{lemma: theta f irreducible}. Thus $\alg{A} / \LTheta(F) \in \class{K}_{\pp}$ for each meet irreducible left $\class{K}$-filter $F \in \Fi \alg{A}$.
\end{proof}

  In case $\class{K} \vDash \one \wedge (x \vee y) \equals (\one \wedge x) \vee (\one \wedge y)$, one can replace the positive universal sentences $\Pre(\Phi)$ in the axiomatization of left pre-$\class{K}$ RLs by the sentences
\begin{align*}
  t_{1} \bs u_{1} \vee \dots \vee t_{n} \bs u_{n} \geq \one,
\end{align*}
  provided that one also adds the equation $\one \wedge (x \vee y) \equals (\one \wedge x) \vee (\one \wedge y)$ as an axiom. Clearly this equation together with the displayed inequality implies $\Pre(\Phi)$, and conversely $\Pre(\Phi)$ implies every equation valid in $\class{K}$, so in particular the equation $\one \wedge (x \vee y) \equals (\one \wedge x) \vee (\one \wedge y)$, by Theorem~\ref{thm: pre-k}.

  This immediately yields the following fact, which improves on~\cite[Proposition~6.10]{RLpaper} and~\cite[Corollary~4.2.6]{RLBook} by showing that each left prelinear RL is distributive.

\begin{corollary} \label{cor: prelinear}
  The following are equivalent for each RL $\alg{A}$:
\begin{enumerate}[(i)]
\item $\alg{A}$ is left prelinear, i.e.\ it satisfies $(\one \wedge x \bs y) \vee (\one \wedge y \bs x) \equals \one$.
\item $\alg{A}$ satisfies $\one \wedge (x \vee y) \equals (\one \wedge x) \vee (\one \wedge y)$ and $(x \bs y) \vee (y \bs x) \geq \one$.
\item $\alg{A}$ is distributive and satisfies $(x \bs y) \vee (y \bs x) \geq \one$.
\end{enumerate}
\end{corollary}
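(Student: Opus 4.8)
My plan is to treat this corollary as a direct application of the machinery already assembled, instantiating the positive universal class $\class{K}$ as the class $\class{L}$ of linear pointed lattices. The first thing to record is that $\class{L}$ validates $\one \wedge (x \vee y) \equals (\one \wedge x) \vee (\one \wedge y)$, since every chain is distributive; hence both the remark immediately preceding the corollary and Theorem~\ref{thm: pre-k subreducts} apply with $\class{K} = \class{L}$. Moreover, linearity is axiomatized by the single positive universal sentence $x \leq y \mathrm{~or~} y \leq x$, for which $\Pre(\Phi)$ is exactly the left prelinearity equation $(\one \wedge x \bs y) \vee (\one \wedge y \bs x) \equals \one$ of~(i). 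Thus (i) literally says ``$\alg{A}$ is left pre-$\class{L}$''.

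For the equivalence (i)~$\Leftrightarrow$~(ii) I would simply invoke that preceding remark specialized to $\class{K} = \class{L}$: it states that $\alg{A}$ is left pre-$\class{L}$ if and only if $\alg{A}$ satisfies both the replacing inequality $(x \bs y) \vee (y \bs x) \gequals \one$ and the added equation $\one \wedge (x \vee y) \equals (\one \wedge x) \vee (\one \wedge y)$, which is precisely~(ii). Concretely, one direction substitutes $x \bs y$ and $y \bs x$ into the $\one$-distributive equation and uses $(x \bs y) \vee (y \bs x) \gequals \one$ to recover $(\one \wedge x \bs y) \vee (\one \wedge y \bs x) \equals \one$; the other direction uses Theorem~\ref{thm: pre-k} to see that a left prelinear RL has its reduct expressed as a subdirect product of algebras in $\class{L}$, so the reduct inherits every equation of $\class{L}$, in particular $\one$-distributivity, while the inequality is immediate from $\one \wedge t \leq t$.

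The only point carrying genuine content is (ii)~$\Leftrightarrow$~(iii). Here (iii)~$\Rightarrow$~(ii) is immediate, as $\one \wedge (x \vee y) \equals (\one \wedge x) \vee (\one \wedge y)$ is an instance of the distributive law and the inequality is common to both conditions. For (ii)~$\Rightarrow$~(iii) I would first identify $\class{L}_{\pp}$: since $x \vee y$ is the larger of $x,y$ in any chain, the constant $\one$ is join prime in every linear pointed lattice, so $\class{L}_{\pp} = \class{L}$ and therefore $\mathbb{ISP}(\class{L}_{\pp})$ is exactly the class of distributive pointed lattices (every distributive pointed lattice is a subdirect product of the two pointed two-element chains, as recorded in Section~\ref{sec: pointed lattices}). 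Now given~(ii), the already-proved equivalence (i)~$\Leftrightarrow$~(ii) makes $\alg{A}$ left prelinear, so its pointed lattice reduct---being a subreduct of the left pre-$\class{L}$ RL $\alg{A}$ itself---lies in $\mathbb{ISP}(\class{L}_{\pp})$ by Theorem~\ref{thm: pre-k subreducts}(ii) and is thus distributive; together with the inequality inherited from~(ii) this yields~(iii).

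The heavy lifting has all been done in Theorem~\ref{thm: pre-k subreducts}, so I do not expect a serious obstacle. The one step that must be gotten exactly right is the identification $\mathbb{ISP}(\class{L}_{\pp}) = \{$distributive pointed lattices$\}$, i.e.\ the observation that linearity already forces join primeness of $\one$. This is what converts the abstract subreduct description into the concrete conclusion that left prelinearity implies distributivity of the lattice reduct, improving on the earlier results that required additional hypotheses.
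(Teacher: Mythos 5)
Your proposal is correct and takes essentially the same route as the paper: there the corollary is presented as an immediate consequence of the remark preceding it (equivalently, of Theorems~\ref{thm: pre-k} and~\ref{thm: pre-k subreducts} instantiated at the class of linear pointed lattices), which is exactly your argument, including the key identification that $\one$ is join prime in every chain, so that $\mathbb{ISP}(\class{K}_{\pp})$ is precisely the class of distributive pointed lattices. The only difference is that the paper's written-out proof additionally records a self-contained equational derivation of (i)~$\Rightarrow$~(iii), using the fact that left prelinearity implies $x \bs (y \vee z) \equals (x \bs y) \vee (x \bs z)$; your higher-level argument does not need it.
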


\begin{proof}
  (iii) $\Rightarrow$ (ii) $\Rightarrow$ (i): trivial. (i) $\Rightarrow$ (iii): this implication follows immediately from Theorem~\ref{thm: pre-k}, but it is also easy to prove directly. In~\cite[Proposition~6.10]{RLpaper} it was shown that left prelinearity implies the equation
\begin{align*}
  & x \bs (y \vee z) \equals (x \bs y) \vee (x \bs z).
\end{align*}
  (More precisely, the last two steps of the proof of~\cite[Proposition~6.10]{RLpaper} can be contracted to a single step which relies only on left prelinearity.) The equation
\begin{align*}
  & x \bs (y \wedge z) \equals (x \bs y) \wedge (x \bs z),
\end{align*}
  in contrast, holds in all RLs.

  The equation $(\one \wedge (x \bs y)) \vee (\one \wedge (y \bs x)) \geq \one$ clearly implies that $(x \bs y) \vee (y \bs x) \geq \one$. Now let $d \assign a \wedge (b \vee c)$. Then $d \bs a \geq \one$ and by the equations displayed above also
\begin{align*}
  d \bs b = (a \bs b) \vee ((b \bs b) \wedge (c \bs b)) \geq (b \bs b) \wedge (c \bs b) \geq \one \wedge (c \bs b).
\end{align*}
  Similarly, $d \bs c \geq \one \wedge (b \bs c)$. Therefore
\begin{align*}
  d \bs ((a \wedge b) \vee (a \wedge c)) = (d \bs a \wedge d \bs b) \vee (d \bs a \wedge d \bs c) \geq (\one \wedge c \bs b) \vee (\one \wedge b \bs c) = \one,
\end{align*}
  again using the equations displayed above.
\end{proof}

\begin{corollary} \label{cor: preconic}
  The following are equivalent for each RL $\alg{A}$:
\begin{enumerate}[(i)]
\item $\alg{A}$ is left preconic, i.e.\ it satisfies $(\one \wedge x \bs \one) \vee (\one \wedge x) \equals \one$.
\item $\alg{A}$ satisfies $\one \wedge (x \vee y) \equals (\one \wedge x) \vee (\one \wedge y)$ and $(x \bs \one) \vee x \geq \one$.
\item $\alg{A}$ has a semiconic pointed lattice reduct and satisfies $(x \bs \one) \vee x \geq \one$.
\end{enumerate}
\end{corollary}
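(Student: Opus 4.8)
The plan is to establish the cycle (iii) $\Rightarrow$ (ii) $\Rightarrow$ (i) $\Rightarrow$ (iii), exactly as in the proof of Corollary~\ref{cor: prelinear}. The first two implications are routine; the substance lies in (i) $\Rightarrow$ (iii), which I would obtain from the general machinery rather than by hand. The observation underlying everything is that left preconicity is precisely left pre-$\class{K}$ for $\class{K}$ the class of conic pointed lattices: applying the operator $\Pre$ of Definition~\ref{def: pre-k} to the conic axiom $x \leq \one$ or $\one \leq x$ gives $(\one \wedge x \bs \one) \vee (\one \wedge \one \bs x) \equals \one$, which simplifies to (i) because $\one \bs x = x$.

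For (iii) $\Rightarrow$ (ii): a semiconic pointed lattice satisfies $\one \wedge (x \vee y) \equals (\one \wedge x) \vee (\one \wedge y)$ by Fact~\ref{fact: semiconic}, and $(x \bs \one) \vee x \geq \one$ is assumed outright, so (ii) holds. For (ii) $\Rightarrow$ (i): meeting the inequality $(x \bs \one) \vee x \geq \one$ with $\one$ and distributing gives $\one = \one \wedge ((x \bs \one) \vee x) = (\one \wedge x \bs \one) \vee (\one \wedge x)$, which is exactly (i).

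For (i) $\Rightarrow$ (iii): since the class of conic pointed lattices validates $\one \wedge (x \vee y) \equals (\one \wedge x) \vee (\one \wedge y)$, Theorem~\ref{thm: pre-k subreducts}(ii) tells us that the pointed lattice subreducts of left preconic RLs are precisely the algebras in $\mathbb{ISP}(\class{K}_{\pp})$, where $\class{K}_{\pp}$ is the class of conic prime-pointed lattices. The pointed lattice reduct of $\alg{A}$ is one such subreduct, so it lies in $\mathbb{ISP}(\class{K}_{\pp})$. By Fact~\ref{fact: semi-kpp}, applied to the class of conic pointed lattices (which is closed under $\alg{B} \mapsto \alg{B}_{-} \oplus \one$, since $\alg{B}_{-} \oplus \one$ is integral and hence conic), this class coincides with the semiconic semi-prime-pointed lattices; in particular every such algebra is semiconic. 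Thus the pointed lattice reduct of $\alg{A}$ is semiconic, and since (i) plainly implies $(x \bs \one) \vee x \geq \one$, condition (iii) follows.

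The main obstacle here is essentially bookkeeping: verifying that $\Pre$ of the conic axiom collapses to the stated preconic equation via $\one \bs x = x$, and that the class of conic pointed lattices meets the two hypotheses needed to invoke Theorem~\ref{thm: pre-k subreducts} and Fact~\ref{fact: semi-kpp}. A self-contained direct proof of (i) $\Rightarrow$ (iii) in the style of Corollary~\ref{cor: prelinear} is possible but less clean: one would have to derive the remaining two semiconicity equations of Fact~\ref{fact: semiconic}, namely $\one \vee (x \wedge y) \equals (\one \vee x) \wedge (\one \vee y)$ and $x \wedge (\one \vee y) \equals (x \wedge \one) \vee (x \wedge y)$, from $(x \bs \one) \vee x \geq \one$ together with the residuation identities, and it is precisely this verification that would carry the genuine work, which is why I would prefer the route through the subreduct theorem.
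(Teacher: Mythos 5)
Your proof is correct and takes essentially the same route as the paper: the paper's entire proof is ``this again follows immediately from Theorem~\ref{thm: pre-k}'', i.e.\ the easy implications (iii) $\Rightarrow$ (ii) $\Rightarrow$ (i) together with the fact that a left pre-$\class{K}$ RL, for $\class{K}$ the conic pointed lattices, has a pointed lattice reduct which is a subdirect product of conic pointed lattices and is therefore semiconic. Your only deviation is cosmetic: for (i) $\Rightarrow$ (iii) you invoke Theorem~\ref{thm: pre-k subreducts}(ii) and Fact~\ref{fact: semi-kpp}, which are downstream consequences of Theorem~\ref{thm: pre-k}, where citing Theorem~\ref{thm: pre-k} directly (condition (iii) there) would suffice and is slightly lighter.
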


\begin{proof}
  This again follows immediately from Theorem~\ref{thm: pre-k}.
\end{proof}

\section{Unital lattice subreducts of integral cancellative CRLs}
\label{section: cancellative}

  In this section, we turn to the problem of describing the unital lattice subreducts of integral cancellative CRLs. A residuated lattice is \emph{cancellative} if its monoidal reduct is cancellative, i.e.\ if it satisfies the quasi-equations
\begin{align*}
  & x \cdot y \equals x \cdot z \implies y \equals z, & & y \cdot x \equals z \cdot x \implies y \equals z.
\end{align*}
  These are equivalent to the equations
\begin{align*}
  & x \bs (x \cdot y) \equals y, & & (y \cdot x) / x \equals y.
\end{align*}
  For example, the integers $\Z$ form a cancellative CRL with the standard order and with addition as the monoidal operation, and with the residual $x \rightarrow z \assign -x + z$. The non-positive integers $\Zm$ form an (integral) cancellative CRL with the standard order and addition as the monoidal operation, and with the residual $x \rightarrow z \assign \min(-x + z, \zero)$. In contrast to $\Z$ and $\Zm$, equipping the non-negative integers $\Zp$ with the same order and multiplication yields a commutative cancellative totally ordered monoid which does not form a residuated lattice, since for $x \nleq z$ in $\Zp$ there is no $y$ in $\Zp$ such that $x \cdot y \leq z$.

  The variety of cancellative RLs was first studied in depth by Bahls et al.~\cite{CanRLs}. Among other results, they proved that every lattice is a subreduct of some (simple) integral cancellative RL~\cite[Theorem~4.3]{CanRLs}. The problem of whether each lattice is a subreduct of an (integral) \emph{commutative} cancellative RL was left open~\cite[Problem~8.2]{CanRLs}. We settle this question in the affirmative below (Corollary~\ref{cor: cancellative}). This is equivalent to showing that this variety of (integral) cancellative CRLs does not satisfy any universal sentence, or equivalently any quasi-equation, in the signature of lattices beyond those satisfied by all lattices.

  Recall that a unary operation $\sigma$ on a poset is an \emph{interior operator} if it is order-preserving, idempotent (i.e.\ $\sigma(\sigma(x)) = \sigma(x))$, and decreasing (i.e.\ $\sigma(x) \leq x$). An interior operator on a residuated lattice $\alg{A}$ is a \emph{conucleus} if $\sigma(x) \cdot \sigma(y) \leq \sigma(x \cdot y)$ for all $x, y \in \alg{A}$ and $\sigma(\one) =  \one$. Equivalently, a conucleus on $\alg{A}$ is an interior operator whose image is a submonoid of $\alg{A}$. Crucially, the image $\alg{A}_{\sigma}$ of a conucleus $\sigma$ can be equipped with the structure of a residuated lattice such that joins, products, and the monoidal unit of $\alg{A}_{\sigma}$ are the same as in $\alg{A}$ (see e.g.~\cite{Conuclei}).

  While we shall not need this fact in the following, let us recall here that part of the interest of (integral) cancellative CRLs lies in their close relation with Abelian lattice-ordered groups: they are precisely the conuclear images of (negative cones of) Abelian lattice-ordered groups~\cite[Lemma~4.4]{Conuclei}.

\begin{lemma}
  Each prime-pointed unital lattice is a pointed lattice subreduct of some simple integral cancellative CRL.
\end{lemma}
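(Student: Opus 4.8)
The plan is to realize $\alg{A}$ as a pointed lattice subreduct of a conuclear image of the negative cone of a suitable Abelian $\ell$-group. Recall from the discussion preceding this lemma (see~\cite{Conuclei}) that if $\alg{G}$ is an Abelian $\ell$-group and $\sigma$ is a conucleus on its negative cone $\alg{G}_{-}$, then the image $(\alg{G}_{-})_{\sigma}$ is an integral cancellative CRL in which joins, products, and the unit $\one$ agree with those of $\alg{G}_{-}$, while the meet is $\sigma(x \wedge y)$. Integrality is clear ($\one$ remains the top), and cancellativity is automatic because the image is a submonoid of the cancellative monoid $\alg{G}_{-}$. Since a nontrivial prime-pointed unital lattice has the form $\alg{A} = \alg{L} \oplus \one$, the element $\one$ is join irreducible, hence join prime; I will use this throughout. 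The goal is thus to produce an Abelian $\ell$-group $\alg{G}$, an injective map $e \colon \alg{A} \to \alg{G}_{-}$ preserving finite joins and sending $\one$ to the top $0$, and a conucleus $\sigma$ whose image contains $e(A)$ and satisfies $\sigma(e(x) \wedge e(y)) = e(x \wedge y)$ for all $x, y \in \alg{A}$. Then $e$ is a pointed lattice embedding of $\alg{A}$ into $(\alg{G}_{-})_{\sigma}$.

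\textbf{The separating embedding.} First I would construct the join-and-top-preserving injection. Take $X$ to be the set of $\one$-proper ideals of $\alg{A}$, let $\alg{G} \assign \Z^{X}$ with the pointwise order (an Abelian $\ell$-group with negative cone $(\Zm)^{X}$), and define $e(a)(I) \assign \zero$ if $a \notin I$ and $e(a)(I) \assign \minusone$ if $a \in I$. Because an ideal is a downset closed under joins, $a \vee b \in I$ iff $a \in I$ and $b \in I$, so $e(a \vee b) = e(a) \vee e(b)$ (componentwise maximum); monotonicity and $e(\one) = 0$ are immediate, and $e$ separates points since $a \nleq b$ with $b \neq \one$ is witnessed by the coordinate $\down b$. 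A direct computation shows $e(x \wedge y) \leq e(x) \wedge_{\alg{G}_{-}} e(y)$, with equality exactly at those coordinates $I$ that are \emph{prime}; at a non-prime ideal where $x \wedge y \in I$ but $x, y \notin I$ the $\ell$-meet overshoots. This is precisely the gap that the conucleus must close.

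\textbf{The conucleus, and the main obstacle.} I would let $C$ be the sub-join-semilattice-and-submonoid of $\alg{G}_{-}$ generated by $e(A)$ and take $\sigma(g) \assign \bigvee \set{c \in C}{c \leq g}$ (well-defined after passing to the ideal completion to secure the required joins). This is a conucleus with image $C$, so it only remains to verify $\sigma(e(x) \wedge e(y)) = e(x \wedge y)$, i.e.\ that no element of $C$ lies strictly between $e(x \wedge y)$ and the $\ell$-meet. The decisive requirement is $e(x) \cdot e(y) \leq e(x \wedge y)$, so that products of generators are driven \emph{below} the intended meets rather than reconstituting the $\ell$-meet by taking joins. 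When $\alg{A}$ is distributive this is easy: restrict $X$ to the prime $\one$-proper ideals, which still separate points and still preserve joins, and then $e(x) \wedge_{\alg{G}_{-}} e(y) = e(x \wedge y)$ already, so $\sigma$ is not even needed. The genuinely hard case is when $\alg{A}$ is non-distributive (e.g.\ $\Nfive \oplus \one$): prime ideals no longer separate points, so one cannot both keep coordinates prime (needed for $e(x)e(y) \leq e(x \wedge y)$) and retain injectivity with $\{0, \minusone\}$-valued coordinates. The main obstacle, and the technical heart of the argument, is therefore to enrich the $\ell$-group --- replacing the two-valued coordinates by chains $T_{i}$ and using $\Zm$-valued multiplicities, or adjoining auxiliary generators indexed by the failures of distributivity --- so that products of images fall strictly below the corresponding meets while joins remain exact. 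Here the join-primeness of $\one$ (equivalently, that joins of non-top elements are non-top) is what keeps each coordinate join-preserving and pins down the order on the enriched group.

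\textbf{Simplicity.} Finally, I would argue that the resulting integral cancellative CRL is simple (as is the prototype $\Zm$). Any nontrivial congruence $\theta$ contains a pair $\pair{u}{\one}$ with $u < \one$; multiplying and using cancellativity and residuation then forces $\theta$ to collapse every element of the negative cone, hence to be the total congruence, exactly as in the simplicity argument of Lemma~\ref{lemma: drastic} (with the role of $\bot$ there played by the cofinal-downward powers $u^{n}$ here). I would arrange the value structure of $\alg{G}$ so that such a $u$ has powers cofinal below, guaranteeing this collapse and thereby simplicity.
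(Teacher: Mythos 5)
Your overall strategy (realizing the lattice inside a conuclear image of a power of $\Zm$) is the same as the paper's, and your distributive-case argument is essentially sound, but the proof has a genuine gap exactly where you locate ``the technical heart of the argument.'' For a non-distributive prime-pointed unital lattice --- which is the entire content of the lemma, since the distributive case already follows by embedding into $\Zm^{P}$ over the prime $\one$-proper ideals $P$ --- you only gesture at possible fixes (``chains $T_{i}$,'' ``$\Zm$-valued multiplicities,'' ``auxiliary generators indexed by the failures of distributivity'') without carrying any of them out. As you yourself observe, your decisive requirement $e(x)\cdot e(y) \leq e(x \wedge y)$ forces the coordinates to be prime, while injectivity forces them not to be (prime $\one$-proper ideals do not separate points of a non-distributive lattice; e.g.\ in $\Mthree \oplus \one$ the only prime $\one$-proper ideal is all of $\Mthree$). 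So the proposal, as written, proves the lemma only for distributive $\alg{A}$, and the approach of controlling products against the intended meets coordinate-by-coordinate cannot be completed within the framework you set up.

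The missing idea, which is how the paper resolves this, is to give up on making products land below the intended meets in any controlled way, and instead make them land in a region that the interior operator retains \emph{wholesale}. Concretely: pass to the completion of $\alg{L}$ and represent that complete lattice as the image $L$ of an interior operator $\sigma$ on $\{\minustwo,\minusone\}^{X}$ (the order dual of the closure-system representation of complete lattices; no distributivity is needed, because the lattice meet is recovered as $\sigma$ applied to the pointwise meet --- the ambient meet is never required to agree with the lattice meet, which is the constraint your construction struggles against). Then, inside the subalgebra of bounded functions of $\Zm^{X}$, take the interior operator $\tau$ whose image is $\down(\minustwo) \cup L \cup \{\zero\}$. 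Since every element of $L$ is $\leq \minusone$, any product of two elements of $L$ is $\leq \minustwo$, hence lies in the downset $\down(\minustwo)$, which is contained in the image in its entirety; so the image is a submonoid and $\tau$ is automatically a conucleus, with no product-versus-meet condition ever needing verification. Meets of elements of $L$ in the conuclear image come out as $\sigma$ of the pointwise meet, i.e.\ the original lattice meets, and restricting to bounded functions makes every nontrivial multiplicative $\one$-filter total, which gives simplicity. Your simplicity sketch is fine in spirit (it is the same cofinality-of-powers argument), but like the rest it depends on the main construction you left unfinished.
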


\begin{proof}
  Consider a non-trivial prime-pointed unital lattice. Recall that such a unital lattice has the form $\alg{L} \oplus \one$ for some lattice $\alg{L}$. If $\alg{L}$ is trivial, then $\alg{L} \oplus \one$ is a subreduct of $\Zm$, we may therefore assume that $\alg{L}$ is non-trivial. Because each lattice embeds into a complete lattice, we may further assume that $\alg{L}$ is complete. Each complete lattice is the image of an interior operator on some powerset: this is an order dual version of the known fact that each complete lattice is the lattice of closed sets of a closure system on some set. In particular, we shall treat $\alg{L}$ as the image of a poset of the form $\{ -2, -1 \}^{X}$ (with the standard componentwise order) for some set $X$ under an interior operator $\sigma$. We may finally assume without loss of generality that the top element of $\alg{L}$ is the constant function with value $-1$.

  Consider the integral cancellative CRL $\Zm^{X}$. We use $\zero$, $\minusone$, $\minustwo$ to denote the appropriate constant functions in $\Zm^{X}$. Let $\alg{A} \leq \Zm^{X}$ be the subalgebra consisting of bounded functions, i.e.\ $f \in \alg{A}$ if and only if there is some $-n \in \Zm$ such that $f(x) \geq -n$ for each $x \in X$. That is, $f \in \alg{A}$ if and only if $f \geq \minusone + \minusone + \dots + \minusone + \minusone$ for some long enough sum. (Clearly $\alg{A}$ is a sublattice and a submonoid. It is closed under residuation because $a \rightarrow b \geq b$ in each integral RL.)

  We take $\alg{M}$ to be the submonoid of $\alg{L}$ with the universe $\down (\minustwo) \cup L \cup \{ \zero \}$. (This is a submonoid because for $a, b \in L$ we always have $a, b \leq \minusone$ and thus $a + b \leq \minustwo$.) The partially ordered submonoid $\alg{M}$ is the reduct of a residuated lattice due to being the image of the interior operator~$\tau$ on $\alg{L}$ such that $\tau(\zero) \assign \zero$, $\tau(a) \assign \sigma(a \wedge \minusone)$ for $a \in \up (\minustwo)$ other than $a = \zero$, and $\tau(a) \assign a \wedge \minustwo$ for $a \notin \up (\minustwo)$. The operator $\tau$ is a conucleus on $\alg{L}$ due to its image being a submonoid of $\alg{L}$. By the definition of $\alg{M}$, the unital lattice $\alg{L} \oplus \one$ is then a unital lattice subreduct of the RL expansion of~$\alg{M}$. Clearly this expansion is integral, commutative, and cancellative, since these properties of RLs depend only on the partially ordered monoid reduct.

  Finally, the residuated lattice expansion of $\alg{M}$ is simple: congruences of integral CRLs are in bijective correspondence with upsets containing $\one$ and closed under multiplication. But each such upset other than $\{ \zero \}$ must contain $\minusone$ (recall the assumption that $\minusone \in L$) and for each $f \in \alg{A}$ we have $f \geq \minusone + \minusone + \dots + \minusone + \minusone$ for some long enough sum.
\end{proof}

\begin{theorem} \label{thm: cancellative}
  The unital lattice subreducts:
\begin{enumerate}[(i)]
\item of f.s.i.\ (simple) integral cancellative CRLs are precisely the prime-pointed unital lattices,
\item of (semisimple) integral cancellative CRLs are precisely the semi-prime-pointed unital lattices.
\end{enumerate}
\end{theorem}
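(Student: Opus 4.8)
My plan is to obtain the theorem by combining the preceding lemma with the general results of Section~\ref{section: rls}, exploiting the fact that an integral cancellative CRL is in particular an integral CRL. I would prove each of the two ``precisely'' claims as a pair of inclusions: the left-to-right inclusions (every unital lattice subreduct is (semi-)prime-pointed) are inherited from the general integral case, since cancellative CRLs form a subclass of all CRLs, whereas the right-to-left inclusions (every (semi-)prime-pointed unital lattice occurs) are supplied by the lemma together with a routine product construction.

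For the left-to-right inclusions, take $\class{K}$ to be the variety of integral (unital) pointed lattices, noting that $\class{K} \vDash \one \wedge (x \vee y) \equals (\one \wedge x) \vee (\one \wedge y)$ holds vacuously because $\one$ is the top element. Every integral cancellative CRL is then an integral CRL, hence a semi-$\class{K}$ CRL. For part~(i), an f.s.i.\ (in particular simple) integral cancellative CRL is an f.s.i.\ semi-$\class{K}$ CRL, so its unital lattice subreducts lie in $\class{K}_{\pp}$, the prime-pointed unital lattices, by Theorem~\ref{thm: main}(i). For part~(ii), the unital lattice subreducts of any integral cancellative CRL lie in $\mathbb{ISP}(\class{K}_{\pp})$, which by Theorem~\ref{thm: main}(ii) and Corollary~\ref{cor: main}(i) is exactly the class of semi-prime-pointed unital lattices.

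For the right-to-left inclusion of part~(i), the preceding lemma embeds every prime-pointed unital lattice as a pointed lattice subreduct of a \emph{simple} integral cancellative CRL; since a simple RL is f.s.i., this settles both the \emph{simple} and \emph{f.s.i.}\ readings at once. For part~(ii), let $\alg{A}$ be a semi-prime-pointed unital lattice. Then $\alg{A}$ is, up to isomorphism, a subdirect product of prime-pointed lattices $\alg{B}_{i}$; since $\alg{A}$ is integral and each projection $\alg{A} \onto \alg{B}_{i}$ is surjective, the image of the top element $\one$ of $\alg{A}$ is the top of each $\alg{B}_{i}$, so the $\alg{B}_{i}$ are prime-pointed \emph{unital} lattices. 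Applying the lemma to each $\alg{B}_{i}$ yields a simple integral cancellative CRL $\alg{C}_{i}$ whose unital lattice reduct contains $\alg{B}_{i}$. The product $\prod_{i} \alg{C}_{i}$ is an integral cancellative CRL, these properties being preserved under products, and it is semisimple as a direct product of simple CRLs; composing $\alg{A} \into \prod_{i} \alg{B}_{i} \into \prod_{i} \alg{C}_{i}$ then exhibits $\alg{A}$ as a unital lattice subreduct of a semisimple integral cancellative CRL, which covers both the \emph{semisimple} and plain readings.

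The only delicate point is the one flagged above: ensuring that the subdirect factors $\alg{B}_{i}$ of a semi-prime-pointed \emph{unital} lattice can themselves be taken unital, as this is precisely what licenses the application of the lemma. This follows from surjectivity of the subdirect projections, as noted. Everything else is bookkeeping, the substantive content---the conuclear embedding of an arbitrary complete lattice into a bounded power of $\Zm$---having already been carried out in the preceding lemma.
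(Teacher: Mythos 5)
Your proposal is correct and follows essentially the same route as the paper's (very terse) proof: the right-to-left inclusions come from the preceding lemma (plus, for part (ii), the observation that a subdirect product of the resulting simple integral cancellative CRLs is semisimple), and the left-to-right inclusions come from the prime-pointedness of f.s.i.\ CRLs as packaged in Theorem~\ref{thm: main} with $\class{K}$ the class of unital lattices. The details you flag and fill in---that the subdirect factors of a semi-prime-pointed unital lattice are themselves unital, and that integrality and cancellativity survive the product construction---are exactly what the paper leaves implicit.
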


\begin{proof}
  This follows from the above lemma and the fact that each f.s.i.\ CRL is prime-pointed (Theorem~\ref{thm: main}).
\end{proof}

\begin{corollary} \label{cor: cancellative}
  Every lattice is a subreduct of an integral cancellative CRL.
\end{corollary}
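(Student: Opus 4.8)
The plan is to obtain this as an immediate corollary of Theorem~\ref{thm: cancellative}, in exactly the way the earlier corollary (each lattice is a subreduct of a simple integral CRL) was deduced from Theorem~\ref{thm: drastic}. The point is that every lattice embeds as a sublattice into a prime-pointed unital lattice, so the constant $\one$ can always be adjoined ``for free'' as a new top element.

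Concretely, given an arbitrary lattice $\alg{A}$, I would pass to the unital lattice $\alg{A} \oplus \one$ obtained by appending a fresh top element $\one$ and taking it to be the distinguished constant. This unital lattice is prime-pointed: if $x \vee y = \one$ but $x \neq \one \neq y$, then $x, y \in \alg{A}$, whence their join is computed in $\alg{A}$ and again lies in $\alg{A}$, contradicting $x \vee y = \one$; thus $\one$ is join prime. (The trivial lattice is covered too, since then $\alg{A} \oplus \one$ is the two-element chain.) Forgetting the constant, $\alg{A}$ is the lattice reduct of the part of $\alg{A} \oplus \one$ lying strictly below $\one$, so $\alg{A}$ embeds as a sublattice into $\alg{A} \oplus \one$.

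I would then invoke Theorem~\ref{thm: cancellative}(i): being a prime-pointed unital lattice, $\alg{A} \oplus \one$ is a pointed lattice subreduct of some simple integral cancellative CRL $\alg{B}$. Composing the sublattice embedding $\alg{A} \into \alg{A} \oplus \one$ with the pointed lattice embedding $\alg{A} \oplus \one \into \alg{B}$ and then discarding the constant yields a lattice embedding of $\alg{A}$ into the lattice reduct of $\alg{B}$, exhibiting $\alg{A}$ as a lattice subreduct of the integral cancellative CRL $\alg{B}$. Since this settles \cite[Problem~8.2]{CanRLs} (equivalently \cite[Problem 7, p.~232]{RLBook}), I would flag it as such.

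I expect no genuine obstacle at the level of the corollary itself: all the real work is already packaged in the lemma preceding Theorem~\ref{thm: cancellative}, whose conuclear construction embeds $\alg{L} \oplus \one$ into the bounded-function subalgebra of $\Zm^{X}$. The only care needed here is the harmless bookkeeping of composing the two embeddings and forgetting~$\one$.
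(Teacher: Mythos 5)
Your proposal is correct and matches the paper's own (implicit) argument exactly: the corollary follows from Theorem~\ref{thm: cancellative} via the observation that every lattice $\alg{A}$ is a lattice subreduct of the prime-pointed unital lattice $\alg{A} \oplus \one$, just as the earlier corollary on integral CRLs was deduced from Theorem~\ref{thm: drastic}. Your verification that $\one$ is join prime in $\alg{A} \oplus \one$ and the bookkeeping of composing the embeddings are exactly the (routine) content the paper leaves unstated.
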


  The above corollary can also be derived from the unpublished result of Young~\cite{Young} stating that intervals of the form $[u, \one]$ in integral cancellative CRLs (equipped with a suitable CRL structure) are precisely the conuclear images of MV-algebras. Indeed, the reader familiar with MV-algebras will easily observe that our construction relies precisely on showing that every lattice is a conuclear image of an MV-algebra.

\section*{Declarations}

\subsection*{Conflict of interest} The author has no conflict of interest to declare.

\subsection*{Funding} This work was supported by the Beatriu de Pin\'{os} grant 2021 BP 00212 of the grant agency AGAUR of the Generalitat de Catalunya.

\subsection*{Ethical approval} Not applicable.

\subsection*{Availability of data and materials} Not applicable.


\begin{thebibliography}{10}

\bibitem{JKP}
Kira Adaricheva, Mikl\'{o}s Mar\'{o}ti, Ralph McKenzie, J.B. Nation, and
  Eric~R. Zenk.
\newblock The {J\'{o}nsson}-{Kiefer} property.
\newblock {\em Studia Logica}, 83(1):111--131, 2006.

\bibitem{CanRLs}
Patrick Bahls, Jac Cole, Nikolaos Galatos, Peter Jipsen, and Constantine
  Tsinakis.
\newblock Cancellative residuated lattices.
\newblock {\em Algebra Universalis}, 50(1):83--106, 2003.

\bibitem{Bergman}
Clifford Bergman.
\newblock {\em Universal algebra: fundamentals and selected topics}, volume 151
  of {\em Studies in Logic and the Foundations of Mathematics}.
\newblock Elsevier, 2007.

\bibitem{RLpaper}
Kevin Blount and Constantine Tsinakis.
\newblock The structure of residuated lattices.
\newblock {\em International journal of Algebra and Computation},
  13(4):437--461, 2003.

\bibitem{CrawleyDilworth}
Peter Crawley and Robert~P. Dilworth.
\newblock {\em Algebraic theory of lattices}.
\newblock Prentice-Hall, 1973.

\bibitem{CD}
Janusz Czelakowski and Wies{\l}aw Dziobiak.
\newblock Congruence distributive quasivarieties whose finitely subdirectly
  irreducible members form a universal class.
\newblock {\em Algebra universalis}, 27:128--149, 1990.

\bibitem{ConicFG}
Wesley Fussner and Nick Galatos.
\newblock Semiconic idempotent logic {I}: structure and local deduction
  theorems.
\newblock Preprint available at \url{https://arxiv.org/abs/2208.09724v2}.

\bibitem{GalatosHorcik}
Nikolaos Galatos and Rostislav Hor\v{c}\'{\i}k.
\newblock Cayley's and {Holland's} theorems for idempotent semirings and their
  applications to residuated lattices.
\newblock {\em Semigroup Forum}, 87(3):569--589, 2013.

\bibitem{Glimpse}
Nikolaos Galatos, Peter Jipsen, Tomasz Kowalski, and Hiroakira Ono.
\newblock {\em Residuated lattices: an algebraic glimpse at substructural
  logics}, volume 151 of {\em Studies in Logic and the Foundations of
  Mathematics}.
\newblock Elsevier, 2007.

\bibitem{HorcikFEP}
Rostislav Hor\v{c}\'{\i}k.
\newblock Finite embeddability property for residuated lattices via regular
  languages.
\newblock In Nikolaos Galatos and Kazushige Terui, editors, {\em Hiroakira Ono
  on Substructural Logics}, volume~23 of {\em Outstanding Contributions to
  Logic}, pages 273--298. Springer, 2022.

\bibitem{ConicHR}
Ai-ni Hsieh and James~G. Raftery.
\newblock Semiconic idempotent residuated structures.
\newblock {\em Algebra universalis}, 61:413--430, 2009.

\bibitem{Jonsson}
Bjarni J\'{o}nsson.
\newblock Congruence distributive varieties.
\newblock {\em Mathematica japonica}, 42(2):353--401, 95.

\bibitem{JonssonKiefer}
Bjarni J\'{o}nsson and James~E. Kiefer.
\newblock Finite sublattices of a free lattice.
\newblock {\em Canadian Journal of Mathematics}, 14:487--497, 1962.

\bibitem{RLBook}
George Metcalfe, Francesco Paoli, and Constantine Tsinakis.
\newblock {\em Residuated structures in algebra and logic}, volume 277 of {\em
  Mathematical Surveys and Monographs}.
\newblock American Mathematical Society, 2023.

\bibitem{Conuclei}
Franco Montagna and Constantine Tsinakis.
\newblock Ordered groups with a conucleus.
\newblock {\em Journal of Pure and Applied Algebra}, 214(1):71--88, 2010.

\bibitem{Stern}
Manfred Stern.
\newblock {\em Semimodular lattices: Theory and applications}.
\newblock Number~73 in Encyclopedia of Mathematics and its Applications.
  Cambridge University Press, 1999.

\bibitem{Young}
William Young.
\newblock Heyting algebras as intervals of commutative, cancellative residuated
  lattices.
\newblock Unpublished manuscript, 2014.

\end{thebibliography}
\end{document}